\newlength{\defbaselineskip} \setlength{\defbaselineskip}{\baselineskip}
\newtheorem{thm}{Theorem}[section]
\newtheorem{lemm}[thm]{Lemma}
\newtheorem{prop}[thm]{Proposition}
\newtheorem{prob}[thm]{Problem}
\theoremstyle{definition}
\newtheorem{defi}[thm]{Definition}
\newtheorem{rem}[thm]{Remark}
\tikzset{
  edge node/.code={%
      \expandafter\def\expandafter\tikz@tonodes\expandafter{\tikz@tonodes #1}}}
\tikzset{
  subseteq/.style={
    draw=none,
    edge node={node [sloped, allow upside down, auto=false]{$\subseteq$}}},
  Subseteq/.style={
    draw=none,
    every to/.append style={
      edge node={node [sloped, allow upside down, auto=false]{$\subseteq$}}}
  }
}
 \numberwithin{equation}{section}
\numberwithin{equation}{section} \theoremstyle{definition}
\DeclareMathOperator{\Aut}{Aut}
 \DeclareMathOperator{\Spec}{Spec}
\DeclareMathOperator{\mi}{min}
\DeclareMathOperator{\Sym}{Sym}
\DeclareMathOperator{\di}{div}
 \DeclareMathOperator{\Ker}{Ker}
\DeclareMathOperator{\Sing}{Sing}
\DeclareMathOperator{\CH}{CH}
          \newcommand\PP{{\mathbb{P}}}
\definecolor{zielony}{rgb}{0.5, 0.9, 0.1}
\definecolor{czerwony}{rgb}{0.8, 0.2, 0.1}
\definecolor{niebieski}{rgb}{0.3, 0.1, 0.9}
\newcounter{appendice}
\begin{document}
\title{\small On Morin configurations of higher length}

\author[G.~Kapustka]{Grzegorz Kapustka}\thanks{GK partially supported by Narodowe Centrum Nauki 2018/30/E/ST1/00530}
\address{Department of Mathematics and Informatics,
Jagiellonian University, {\L}ojasiewicza 6, 30-348 Krak{\'o}w, Poland.}
\email{grzegorz.kapustka@uj.edu.pl}
\author[A.~Verra ]{Alessandro Verra} \thanks{AV partially supported by INdAM-GNSAGA and by PRIN-2015 'Geometry of Algebraic Varieties'}
\address{Universita  Roma Tre, Dipartimento di Matematica e Fisica, Largo San Leonardo Murialdo 1 \ 00146 Roma, Italy}
\email{ verra@mat.unirom3.it}
\maketitle
\begin{abstract}
This paper studies finite Morin configurations $F$ of planes in $\mathbb P^5$ having higher length. The uniqueness of the configuration of maximal cardinality $20$
is proven. This is related to the stable canonical genus $6$ curve $C_{\ell}$ union of the $10$ lines of a smooth quintic Del Pezzo surface $Y$ in $\mathbb P^5$ and to the Petersen graph.
Families of length $\geq 16$, previously unknown, are constructed by smoothing partially $C_{\ell}$. A more general irreducible family of special configurations of 
length $\geq 11$, we name as Morin-Del Pezzo configurations,  is considered and studied. This depends on $9$ moduli and is defined via the family of nodal and rational
canonical curves of $Y$. The special relations between Morin-Del Pezzo configurations and the geometry of special threefolds, like the 
Igusa quartic or its dual Segre primal, are focused.
 \end{abstract}

\section{Introduction}
Let $\mathbb G$ be the Grassmannian of $n$-spaces of $\mathbb P^{2n+c}$,  $c \geq 1$. For any $u \in \mathbb G$ we denote by $P_u$ its corresponding $n$-space and by $\sigma_u$ the codimension $c$  Schubert variety \begin{equation} \sigma_u := \lbrace e \in \mathbb G \ \vert \ P_u \cap P_e \neq \emptyset \rbrace. \end{equation}  
A \it scheme of incident $n$-spaces \rm is a closed scheme $F \subset \mathbb G$ satisfying the condition
 \begin{equation}
F \subseteq \bigcap_{u \in F} \sigma_u.
\end{equation}
This  implies $P_u \cap P_v \neq \emptyset$, $\forall u,v \in F$. We say that $F$ is \it complete \rm if the equality holds.     \begin{defi} \it A Morin configuration is a complete scheme $F$ of incident $n$-spaces. \end{defi} 
Integral Morin configurations $F$ of planes in $\mathbb P^5$ were classified in 1930 by Morin himself if $\dim F > 0$,  see \cite{Morin}. In the same paper the following problem is posed:
\begin{prob}\label{Mor} Classify finite Morin configurations  of planes in $\PP^5$.
\end{prob}
 Notice that, as Zak points out, the analogous classification in $\mathbb P^{2n+c}$ is elementary in the case $2n + c + 1 \neq \binom{n+2}2$, see \cite{A}. 
 Morin problem  in $\mathbb P^5$, which is specially related  to hyperk\"ahler geometry, was readdressed in \cite{DM} by Dolgachev and Markushevich. They construct and study configurations of minimal cardinality $10$ and their families. In \cite{O} O' Grady proved the existence of configurations of cardinality $k$ for any $10\leq k\leq 16$. Next he showed that a finite Morin configuration of planes in $\mathbb P^5$
has length $k \leq 20$ and asked about the missing cases. The main result of \cite{DGKKW} is the construction of a finite Morin configuration of planes in $\PP^5$ of cardinality $20$.    In this paper we contribute to Morin problem and to describe the geometry of the configurations in several ways.  We work over the complex field, let us summarize our results as follows. \par
Along the paper we construct in $\mathbb P^5$ an irreducible family of Morin configurations $F$ of length $k$ between $11$ and $20$. This family depends on $9$ moduli and defines a divisor in the moduli
space of finite Morin configurations. A general configuration has instead length $10$. For reasons soon to be evident, the members
of our family will be called \it Morin-Del Pezzo \rm configurations. Relying on the geometry of singular genus $6$ canonical curves, we describe these configurations of length $k \in [11,20]$. We prove that any smooth configuration $F$ of length $k \geq 16$ is Morin-Del Pezzo and moreover that:
\begin{thm}  \label{uni} Up to $\Aut \mathbb P^5$ a unique Morin configuration of planes in $\mathbb P^5$ exists having maximal cardinality $20$. \end{thm}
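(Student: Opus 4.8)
The plan is to show that any Morin configuration $F$ of maximal length $20$ must arise from the degenerate canonical genus $6$ curve $C_{\ell}$, the union of the $10$ lines on a smooth quintic Del Pezzo surface $Y \subset \PP^5$, and then to invoke the uniqueness of $Y$ up to $\Aut \PP^5$. The abstract already signals this route and asserts that any smooth configuration of length $k \geq 16$ is Morin--Del Pezzo; so the first step is to apply that structural classification with $k = 20$ to conclude that the configuration of length $20$ is necessarily Morin--Del Pezzo, i.e.\ its $20$ planes are encoded by the nodal/rational canonical curves attached to a quintic Del Pezzo surface $Y$. I would phrase this as: by the (already established) results on configurations of length $\geq 16$, the length-$20$ configuration $F$ is of Morin--Del Pezzo type, hence there is a smooth quintic Del Pezzo $Y$ producing it.

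Next I would pin down the combinatorial rigidity that forces maximality to select the most symmetric case. The quintic Del Pezzo surface carries exactly $10$ lines whose intersection pattern is the Petersen graph, and the $20$ planes of a maximal configuration should correspond to a canonical combinatorial datum built from these $10$ lines (for instance the reducible canonical curve $C_{\ell}$ and the pencils/rulings it determines). The key step is to argue that achieving the bound $k = 20$ leaves no freedom in the moduli: a general Morin--Del Pezzo configuration has length $10$, and increasing the length forces the defining canonical curve to degenerate maximally, i.e.\ all the way to $C_{\ell}$. Concretely, I would track how the length $k$ depends on the position of the canonical curve in the family parametrized by the $9$ moduli, show that $k$ is maximized exactly when the curve specializes to the totally reducible $C_{\ell}$, and verify that this specialization is unique up to the symmetries of $Y$.

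With the configuration forced to be the one associated to $C_{\ell}$ on a quintic Del Pezzo $Y$, the final step is the rigidity of $Y$ itself. The smooth quintic Del Pezzo surface in $\PP^5$ is unique up to $\Aut \PP^5 = \PGL_6(\C)$, since all such surfaces are projectively equivalent (they are the common intersection defining the standard $\Sigma_5$-symmetric model, equivalently $\overline{M}_{0,5}$ anticanonically embedded). Because the $20$ planes are produced from $Y$ by an $\Aut(Y)$-equivariant construction, and $\Aut(Y) \cong \mathfrak S_5$ acts transitively enough on the relevant combinatorial data, any two configurations of length $20$ are carried one to the other by an element of $\PGL_6(\C)$. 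I would finish by assembling these facts: existence is given by \cite{DGKKW}, and the three reductions above (Morin--Del Pezzo structure, forced degeneration to $C_{\ell}$, projective uniqueness of $Y$) give uniqueness up to $\Aut \PP^5$.

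The main obstacle I expect is the middle step: ruling out \emph{any} other length-$20$ configuration that is not of the $C_{\ell}$ type, or that comes from a different (possibly singular or degenerate) Del Pezzo-like surface. One must be careful that the bound $k \leq 20$ (O'Grady's estimate) is attained only in this single combinatorial situation; in particular one has to exclude configurations built on degenerations of $Y$ where the Petersen structure breaks, and to check that the count of incident planes genuinely drops below $20$ in every such alternative. This is where a precise analysis of how the $20$ is distributed among the $10$ lines of $Y$ (and of the rigidity of the Petersen graph as an incidence structure) will carry the argument.
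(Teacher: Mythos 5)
Your outline coincides with the paper's route---reduce a length-$20$ configuration to a Morin--Del Pezzo one, identify it with data on a smooth quintic Del Pezzo surface $Y$, force the associated curve to be $C_{\ell}$, and conclude by projective uniqueness of $Y$---but the step you defer (``track how $k$ depends on the $9$ moduli and show it is maximized exactly at $C_{\ell}$'') is precisely where the mathematical content lies, and as stated it is a program, not an argument: you give no mechanism computing $k$ from the curve, so the claim that maximality forces total degeneration is unsupported. The paper closes this gap with an exact count rather than any maximization over moduli. A Morin--Del Pezzo configuration decomposes as $F = h \cup F'$, where $h$ is the Del Pezzo $5$-tuple of nets of rank-$4$ quadrics through $Y$ and $F'$ is biregular to $\Sing C$ for the associated curve $C \in \vert \mathcal O_{Y}(2) \vert$ (Section 8: $F' = f_N(\Sing C)$). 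Hence $\vert F \vert = 20$ forces $\vert \Sing C \vert = 15$; moreover, since $20$ also bounds the \emph{length}, a configuration of cardinality $20$ must be reduced, hence smooth, so $\Sing C$ is smooth of cardinality $15$. The whole question then reduces to: the unique reduced member of $\vert \mathcal O_Y(2) \vert$ with $15$ nodes is $C_{\ell}$ (such a curve has arithmetic genus $6$, so a reduced member with $c$ components has at most $6 + c - 1$ nodes, and $c \leq 10$ since the degree is $10$; equality forces ten line components, i.e.\ exactly the ten lines of $Y$). Nothing about how $k$ varies in the $9$-dimensional family is needed.

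Two further points. Your flagged worry about ``degenerations of $Y$ where the Petersen structure breaks'' is dispatched structurally in the paper, not by a case analysis: for $k \geq 16$ the threefold $V_A$ contains a plane $P_h$ (theorem \ref{no plane}), $b = P_h \cdot \Sing V_A$ is the base locus of a pencil of conics (proposition \ref{no fixed}), smoothness of $F$ makes $b$ a set of four points in general position (lemma \ref{gen-pos-lemma} rules out collinearity), and the surface obtained by blowing up $P_h$ at these four points is therefore automatically a \emph{smooth} quintic Del Pezzo; no degenerate surface can arise. Finally, a factual slip: a Morin--Del Pezzo configuration has length at least $11$, not $10$---length $10$ is the \emph{general} Morin configuration, which is never of Del Pezzo type, since $h$ spans only a $3$-space inside the $9$-space $\mathbb P(A)$.
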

See sections from 6 to 9. The central core of the paper is dedicated to show several relations connecting Morin configurations of planes to the beautiful geometry of some classical projective varieties. Our methods relies indeed on these relations, which seem to be of independent interest. This includes: \medskip \par \noindent
(1) \it The geometry related to a quintic Del Pezzo surface and the Segre primal. \medskip \par \noindent \rm
(2) \it The family of threefolds $V \in \vert \mathcal O_{\mathbb P^2 \times \mathbb P^2}(2,2) \vert $ with isolated singularities.  \medskip  \par \noindent \rm
(3) \it Highly singular canonical curves of genus $6$ and possibly higher.
\rm \medskip \par
(1) To reasonably summarize these relations and our further work, let us consider a smooth quintic Del Pezzo surface $Y \subset \mathbb P^5$. The linear system $\mathbb P^4_Y$, of the quadrics through $Y$, is a $4$-space. By the way we prove the following result, see \ref{Segre}.
\begin{thm} The discriminant hypersurface  in $\mathbb P^4_Y$ is twice the Segre cubic $\Delta_Y$. \end{thm}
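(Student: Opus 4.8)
The plan is to realise $Y$ as a linear section of the Grassmannian and to read the discriminant off the Plücker geometry. Write $V=\C^5$ and $Y=G(2,V)\cap\PP(W)$, where $W\subset\wedge^2 V$ is the six-dimensional subspace with $\PP(W)=\PP^5$. The five Plücker quadrics are the sub-Pfaffians of the generic $5\times5$ alternating matrix, so they restrict to a basis of $H^0(\mathcal I_Y(2))$ and identify $\PP^4_Y$ with $\PP(V)$: to $\lambda\in V$ corresponds the quadric $Q_\lambda(\omega)=\lambda\wedge\omega\wedge\omega$ on $\wedge^2 V$, with polar form $B_\lambda(\omega,\omega')=\lambda\wedge\omega\wedge\omega'\in\wedge^5 V\cong\C$. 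The discriminant is then $D(\lambda)=\det\bigl(B_\lambda|_W\bigr)$, a form of degree $6$ in $\lambda$, and everything reduces to understanding its zero locus.

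First I would analyse $B_\lambda$ on all of $\wedge^2 V$. Splitting $V=\langle\lambda\rangle\oplus U$ shows that the radical of $B_\lambda$ is exactly $R_\lambda:=\lambda\wedge V$ (dimension $4$) and that $B_\lambda$ descends to the nondegenerate wedge pairing on $\wedge^2 V/R_\lambda\cong\wedge^2 U$. Consequently $Q_\lambda|_W$ is degenerate if and only if $W$ meets the radical, i.e.
\[
D(\lambda)=0 \iff (\lambda\wedge V)\cap W\neq 0 .
\]
Call this locus $\Sigma$. Its decisive feature is nonreducedness, and the mechanism is a syzygy: the map $M(\lambda)\colon V\to\wedge^2 V/W$, $a\mapsto\lambda\wedge a \bmod W$, is a $4\times5$ matrix linear in $\lambda$ satisfying $M(\lambda)\lambda=0$, so its signed maximal minors span the kernel $\langle\lambda\rangle$ and therefore equal $\pm\lambda_i\,g(\lambda)$ for a single \emph{cubic} form $g$. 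Hence $\Sigma=\{g=0\}$ is a cubic threefold and $\{D=0\}=\Sigma$ set-theoretically.

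Next I would identify $\Sigma$ with the Segre cubic $\Delta_Y$. The singular points of $\Sigma$ are the $\lambda$ with $\dim\bigl((\lambda\wedge V)\cap W\bigr)\ge 2$; such a two-dimensional space is spanned by decomposable vectors $\lambda\wedge a_1,\lambda\wedge a_2\in W$, and the pencil $\{\lambda\wedge(s a_1+t a_2)\}$ consists of decomposables lying in $W$, hence is a line on $Y=G(2,V)\cap\PP(W)$. Since a smooth quintic del Pezzo surface carries exactly ten lines, $\Sigma$ is a cubic threefold with (at least) ten nodes; as the Segre cubic is the unique cubic threefold with ten nodes, $\Sigma=\Delta_Y$, and in particular $g$ is irreducible and reduced.

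Finally the two computations combine formally. The form $D$ has degree $6$, while $\{D=0\}=\{g=0\}$ with $g$ an irreducible cubic, so by the Nullstellensatz $g\mid D$; since $\Delta_Y$ is the only component of $\{D=0\}$, comparing degrees forces $D=c\,g^{2}$ for a nonzero scalar $c$, i.e. the discriminant hypersurface equals $2\Delta_Y$ as a divisor. I expect the genuine obstacle to lie in the middle steps: proving that $\Sigma$ is exactly a cubic, rather than a sextic — equivalently that the discriminant is everywhere nonreduced — is the whole content of the word ``twice'', and it rests on the syzygy $M(\lambda)\lambda=0$ together with the count of the ten lines on $Y$. Checking that these ten points are honest nodes of an irreducible $\Sigma$, with no stray components of $\{D=0\}$, is the delicate point that makes the clean conclusion $D=c\,g^2$ legitimate.
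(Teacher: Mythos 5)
Your route is genuinely different from the paper's: the paper works with the incidence variety $\tilde\Delta=\lbrace (z,Q)\in Y\times\mathbb H \ \vert \ z\in\Sing Q\rbrace$, shows the first projection is a $\mathbb P^1$-bundle, proves that every singular quadric through $Y$ has a singular point \emph{on} $Y$ (so $q_2:\tilde\Delta\to\Delta$ is onto the support and birational), reads off $\deg\Delta=3$ from the Schubert class $(2,3)$ of the quintic del Pezzo as a surface linear section of the Grassmannian of lines of $\mathbb P^4$, and quotes the classical identification with the Segre cubic; the factor $2$ then comes from comparing degrees $6=2\cdot 3$, just as in your last paragraph. Your first half is correct and more self-contained: the radical of $B_\lambda$ is $\lambda\wedge V$; since $\dim W=6$ equals the dimension of the nondegenerate quotient $\wedge^2V/(\lambda\wedge V)$, degeneracy of $Q_\lambda\vert_W$ is indeed equivalent to $W\cap(\lambda\wedge V)\neq 0$; and the Cramer syzygy $m_i=\pm\lambda_i g$ legitimately exhibits the support as the cubic $\lbrace g=0\rbrace$.

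There is, however, a genuine gap at exactly the step you flag. (a) The assertion that the ten points $\lambda$ with $\dim\bigl((\lambda\wedge V)\cap W\bigr)\ge 2$ are singular on $\Sigma=\lbrace g=0\rbrace$ is not formal: corank $2$ of $Q_{\lambda_0}\vert_W$ by itself only forces $\mult_{\lambda_0}D\ge 2$. Indeed, writing $B_{\lambda_0+t\mu}\vert_W$ in block form with respect to $K:=W\cap(\lambda_0\wedge V)$ and taking the Schur complement, the order in $t$ of the determinant is exactly $2$ whenever $B_\mu\vert_K$ is nondegenerate, and multiplicity $2$ of $D=cg^2$ is perfectly compatible with $g$ being \emph{smooth} at $\lambda_0$. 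What rescues the claim is a special feature of this web that you never invoke: $K$ is spanned by $\lambda_0\wedge a_1,\lambda_0\wedge a_2$, so $B_\mu(\lambda_0\wedge a_i,\lambda_0\wedge a_j)=\mu\wedge\lambda_0\wedge a_i\wedge\lambda_0\wedge a_j=0$ for \emph{every} $\mu$ (geometrically, $\mathbb P(K)$ is the corresponding line of $Y$, which lies on every quadric of $\mathbb H$); the Schur complement then has order $\ge 4$ along every line through $\lambda_0$, whence $\mult_{\lambda_0}D\ge 4$ and $\mult_{\lambda_0}g\ge 2$. (b) Even with ten singular points in hand, ``the unique cubic threefold with ten nodes'' is a statement about cubics with \emph{isolated} singularities: you have shown neither that $g$ is irreducible (if $g$ factors, the conclusion $D=cg^2$ fails as well) nor that $\Sing\Sigma$ is finite. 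Your proposed description of $\Sing\Sigma$ is also incomplete in the converse direction, since $\mathrm{corank}(Q_\lambda\vert_W)\ge 2$ does not imply $\dim(W\cap(\lambda\wedge V))\ge 2$: when that intersection is one-dimensional, the image of $W$ is a hyperplane of the nondegenerate six-dimensional quotient and may itself be degenerate. These points are repairable --- irreducibility of $\Sigma$, for instance, follows from the paper's lemma that every singular $Q\in\mathbb H$ satisfies $\Sing Q\cap Y\neq\emptyset$, which exhibits $\Sigma$ as the birational image of a $\mathbb P^1$-bundle over $Y$ --- but as written the identification $\Sigma=\Delta_Y$, and with it the theorem, is asserted rather than proved.
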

  Then we consider the union of the ten lines of $Y$. This is a stable canonical curve 
\begin{equation} 
C_{\ell} \subset Y \subset \mathbb P^5
\end{equation}
of genus $6$. The linear system $\mathbb P^5_{\ell}$ of the quadrics through $C_{\ell}$ is a $5$-space and $\mathbb P^4_Y$ is a hyperplane in it. It is known that the locus in $\mathbb P^4_Y$ of all quadrics of rank $\leq 4$ is union of five planes $P_1 \dots P_5$ of $\Delta_Y$.   Moreover, $\Sing C_{\ell}$ is a set of $15$ nodes and, for each $z \in \Sing C_{\ell}$, the linear system $P_z := \lbrace Q \in \mathbb P^5_{\ell} \ \vert \ z \in \Sing Q \rbrace$  is a plane. $P_z$ is not in $\mathbb P^4_Y$. Let
 $z_1, z_2 \in \Sing C_{\ell}$, one can show that $P_{z_1} \cap P_{z_2} \neq \emptyset$. Then it is possible to deduce as in section 8 that the mentioned planes define a Morin configuration
 \begin{equation} F_{\ell} :=  \lbrace P_1 \dots P_5, \ P_z \  z \in \Sing C_{\ell} \rbrace. \end{equation}
 In particular,  theorem \ref{uni}  can be also stated as follows.
  \begin{thm} $F_{\ell}$ is the unique Morin configuration of cardinality $20$ up to $\Aut \mathbb P^5$.   \end{thm}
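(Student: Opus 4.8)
The plan is to show that any Morin configuration $F$ of cardinality $20$ is projectively equivalent to $F_{\ell}$; since all smooth quintic Del Pezzo surfaces in $\PP^5$ lie in a single $\PGL_6$-orbit, this is the entire content of the statement. By O'Grady's bound $k\leq 20$ the value $20$ is maximal, so I would first argue that a configuration attaining it must be \emph{smooth}, i.e.\ a reduced set of $20$ distinct planes. A positive-dimensional component is excluded since Morin's classification already covers $\dim F>0$, and the completeness condition $F=\bigcap_{u\in F}\sigma_u$ together with the length bound should rule out non-reduced structure at the maximal length. Granting this, the result recalled above — every smooth configuration of length $\geq 16$ is Morin-Del Pezzo — applies, so $F$ is Morin-Del Pezzo and is therefore cut out by a smooth quintic Del Pezzo surface $Y\subset\PP^5$ together with a nodal canonical curve $C\in\lvert -2K_Y\rvert$ of arithmetic genus $6$.

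Next I would make the numerical bookkeeping explicit. A Morin-Del Pezzo configuration consists of the five planes $P_1,\dots,P_5$ of rank-$\leq 4$ quadrics in $\PP^4_Y$, which are independent of $C$ and governed by the Segre cubic $\Delta_Y$, together with one plane $P_z$ for every node $z\in\Sing C$. Hence $\#F=5+\#\Sing C$, and $\#F=20$ forces $C$ to carry exactly $15$ nodes. The key point is then to see that a curve in $\lvert -2K_Y\rvert$ with $15$ nodes is maximally degenerate and must be the union of the ten lines. Writing $C=\sum_{i=1}^r C_i$ with the $C_i$ the reduced irreducible components and $\delta$ the number of nodes, connectedness of $C$ gives the arithmetic genus formula $6=p_a(C)=\sum_i g(C_i)+\delta-(r-1)$, so $\delta=5+r-\sum_i g(C_i)$. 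Since $-2K_Y$ has degree $10$ we have $r\leq 10$, whence $\delta\leq 15$ with equality exactly when all $g(C_i)=0$ and $r=10$, i.e.\ when $C$ is the sum of ten distinct lines. As $Y$ carries precisely ten lines, this forces $C=C_{\ell}$, and $\Sing C_{\ell}$ is then the expected set of $15$ nodes recorded by the Petersen graph.

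Finally I would settle uniqueness of the orbit. A smooth quintic Del Pezzo surface is the blow-up of $\PP^2$ at four general points, unique up to isomorphism, and its anticanonical embedding in $\PP^5$ is unique up to $\Aut\PP^5=\PGL_6$. Consequently the pair $(Y,C_{\ell})$, and therefore all the data $(P_1,\dots,P_5,\,P_z)$ reconstructed from it, is determined up to $\Aut\PP^5$. Thus $F$ is projectively equivalent to $F_{\ell}$, which is the assertion.

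I expect the main obstacle to be the reduction to the smooth case together with the faithful reconstruction of $(Y,C)$ from $F$: one must ensure that a length-$20$ configuration cannot acquire non-reduced structure or fail to arise from a Del Pezzo surface, and that the correspondence between the planes $P_z$ and the nodes of $C$ is a genuine bijection, so that recovering $15$ nodes pins down $C_{\ell}$ itself rather than merely a curve with the same invariants. The genus count makes the combinatorics rigid, so the delicate step is the geometric one — verifying that no two distinct maximal configurations can share the same underlying quintic Del Pezzo surface.
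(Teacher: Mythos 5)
Your proposal is correct and follows essentially the same route as the paper's Section 9 proof: the length bound $\leq 20$ forces $F$ to be reduced, the length $\geq 16$ result (via the plane contained in $V_A$) makes $F$ Morin--Del Pezzo, the count $\#F = 5 + \#\Sing C$ forces $\#\Sing C = 15$, and the unique such curve in $\vert -2K_Y \vert$ is $C_{\ell}$, with uniqueness of the anticanonically embedded quintic Del Pezzo pair $(Y, C_{\ell})$ up to $\PGL_6$ finishing the argument. The one place you go beyond the paper is welcome: where the paper dismisses the identification of the $15$-nodal curve as ``well known, and easy to see,'' you supply the explicit arithmetic-genus count $\delta = 5 + r - \sum_i g(C_i) \leq 15$, with equality forcing ten rational degree-one components, hence the ten lines of $Y$.
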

(2) The Lagrangian Grassmannian $LG(10,20)$  and Morin configurations are strictly related. To be more precise let us fix some conventions, to be used throughout all the paper.
We assume $\mathbb P^5 = \mathbb P(W)$ and denote the natural symplectic pairing of $\wedge^3W$ as
\begin{equation}
\label{pairing}  w: \wedge^3 W \times \wedge^3 W  \to \wedge^6W. 
\end{equation}
Let $\mathbb G \subset \mathbb P(\wedge^3 W)$ be the Grassmannian of planes of $\mathbb P^5$. As is well known a closed scheme $F \subset \mathbb G$  is a Morin configuration iff its linear span is $\mathbb P(A)$, where $A$ belongs to the Lagrangian Grassmannian  $LG(10, \wedge^3 W)$ and \begin{equation} F = \mathbb P(A) \cdot \mathbb G. \end{equation}  We fix a point $u \in \mathbb G$: \it
for any finite $F$ considered $u$ will be a smooth point of $F$. \rm Let $P^{\perp}_u$ be the net of hyperplanes through the plane $P_u$, we also fix the notation:
\begin{equation}
\mathbb P^2 \times \mathbb P^2 := P_u \times P^{\perp}_u.
\end{equation}
This paper also relies on the construction, given in section $2$, where we associate to a Morin configuration $F$ a hypersurface of bidegree $(2,2)$ in $\mathbb P^2 \times \mathbb P^2$. Indeed, $F$ spans 
$\mathbb P(A)$ as above and $F$ is pointed by $u$. We show that the pair $(A,u)$ uniquely defines a hypersurface $V_A \subset \mathbb P^2 \times \mathbb P^2$ and prove the following theorem.
 \begin{thm} \label{biregular f_u} There exists a natural biregular map between $F - \lbrace u \rbrace$ and $\Sing V_A$. \end{thm}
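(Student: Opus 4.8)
The plan is to realise the asserted map geometrically and then read off both directions from the local equations of $V_A$ produced in section $2$. Fix a splitting $W = U \oplus U'$ with $P_u = \mathbb P(U)$, so that $P^\perp_u = \mathbb P((W/U)^*)$ and $\mathbb P^2\times\mathbb P^2 = \mathbb P(U)\times\mathbb P((W/U)^*)$. In the affine chart of $\mathbb G$ centred at $u$ a neighbouring plane is the graph of $\phi \in \Hom(U,U')\cong T_u\mathbb G$, and the associated decomposable vector expands as
$$ v(\phi) = u + \phi + Q(\phi) + (\det\phi)\,u', $$
where $\phi$ is the linear term, $Q(\phi)\in U\otimes\wedge^2 U'$ is quadratic and $u'$ spans $\wedge^3 U'$. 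First I would record the symplectic bookkeeping: since $u\in A=A^{\perp}$ and $u$ pairs under $w$ only with $u'$, every element of $A$ has vanishing $u'$-component, and the reduction $\bar A = A/\langle u\rangle$ is a Lagrangian subspace of $T_u\mathbb G\oplus(T_u\mathbb G)^{*}$. The choice of $u$ as a smooth point of $F$ makes $\bar A$ transverse to the second summand, so $\bar A$ is the graph of a symmetric map $S$; consequently $v(\phi)\in A$ is equivalent to the two conditions $\det\phi = 0$ and $Q(\phi)=S(\phi)$, which is precisely how the pair $(A,u)$ feeds into the definition of $V_A$.

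Next I would introduce the candidate morphism $\beta\colon F\setminus\{u\}\to\mathbb P(U)\times\mathbb P((W/U)^*)$ by $v\mapsto\bigl(P_u\cap P_v,\ \langle P_u,P_v\rangle\bigr)$. Completeness of $F$ guarantees $P_u\cap P_v\neq\emptyset$ for every $v$, and smoothness of $u$ together with finiteness of $F$ forces the generic incidence to be transverse, so that $P_u\cap P_v$ is a single reduced point and $\langle P_u,P_v\rangle$ a hyperplane; in the chart above this reads $\phi\mapsto(\Ker\phi,\coker\phi)$ for a rank-two $\phi$. I would check that $\beta$ extends to a morphism on all of $F\setminus\{u\}$ and then prove $\beta(F\setminus\{u\})\subseteq\Sing V_A$ by a local computation: at $(p,H)=\beta(v)$ the defining $(2,2)$ form of $V_A$ and all its first partials vanish, the second-order vanishing being forced by the fact that $\phi$ solves $Q(\phi)=S(\phi)$ with $\det\phi=0$.

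For the inverse I would fix $(p,H)$ and consider the four-dimensional linear space $L_{p,H}\subset\Hom(U,U')$ of maps with $\Ker\supseteq\langle p\rangle$ and image in the two-plane cut out by $H$. On $L_{p,H}$ the kernel and cokernel are constant, so the quadratic term collapses to $Q(\phi)=c(\phi)\,p\otimes\omega_0$ for a fixed vector $p\otimes\omega_0$ and a scalar quadratic form $c$, whereas $S$ stays linear. The incidence equations $Q(\phi)=S(\phi)$ then decouple into the components of $S(\phi)$ transverse to $p\otimes\omega_0$, a linear system on $L_{p,H}$, together with one residual scalar relation; by construction of $V_A$ its vanishing to order two at $(p,H)$ is exactly the degeneracy making this system admit a one-dimensional solution space. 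This assigns to each point of $\Sing V_A$ a unique $\phi$, hence a unique $v=v(\phi)\in F\setminus\{u\}$, and defines the inverse morphism $\gamma$. I would finish by verifying $\gamma\circ\beta=\mathrm{id}$ and $\beta\circ\gamma=\mathrm{id}$ and concluding biregularity.

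I expect the main obstacle to be the precise translation, in the two local steps, between the second-order vanishing of the $(2,2)$ equation at $(p,H)$ and the solvability of the affine incidence equations $Q(\phi)=S(\phi)$, $\det\phi=0$ over a fixed kernel and cokernel. In particular one must control the loci where $\phi$ drops to rank one, so that $P_u\cap P_v$ fails to be a reduced point, and argue there, using finiteness of $F$ and smoothness at $u$, that $\beta$ and $\gamma$ remain morphisms and stay mutually inverse on the nose rather than merely birationally.
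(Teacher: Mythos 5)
Your setup (the chart at $u$, the expansion $v(\phi)=u+\phi+Q(\phi)+(\det\phi)\,u'$, and the map $v\mapsto(P_u\cap P_v,\langle P_u,P_v\rangle)$, which is exactly the paper's $\gamma\times\gamma^{\perp}$ composed with the Segre embedding) is correct, but there is a genuine error at the core: your Lagrangian normal form has the transversality backwards. The hypothesis that $u$ is a smooth point of the finite scheme $F$ is $A\cap W^2_u=\wedge^3U$, i.e.\ $\bar A$ meets $T_u\mathbb G$ (the \emph{first} summand) trivially; it says nothing about the second summand. Hence $\bar A$ is the graph of a symmetric map $S\colon (T_u\mathbb G)^*\to T_u\mathbb G$, and the membership condition reads $\phi=S(Q(\phi))$, not $Q(\phi)=S(\phi)$. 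Transversality to $(T_u\mathbb G)^*$ can genuinely fail: an element of $A$ of the form $a_1+a_3$ (vanishing linear term) arises exactly from a plane of $F-\lbrace u\rbrace$ meeting the auxiliary plane $\mathbb P(U')$ in a line, and such planes also lie \emph{outside} your affine chart, so your normal form misses them twice over. (A generic choice of $U'$ plausibly avoids this, but you neither make that choice nor prove the avoidance, and you attribute the graph property to smoothness at $u$, which gives the other transversality.) With the corrected direction, your inverse step on $L_{p,H}$ changes character: since $Q(\phi)=c(\phi)\,p\otimes\omega_0$ on $L_{p,H}$, the equation becomes the proportionality $\phi=c(\phi)\,S(p\otimes\omega_0)$, whose degenerate case $c(S(p\otimes\omega_0))=0$ is precisely where the matching point of $F$ escapes the chart — so the "decoupled linear system plus one scalar relation" analysis does not survive the correction.

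Independently of the sign of $S$, the two steps that actually carry the theorem are asserted rather than proven. First, your key sentence — that second-order vanishing of the $(2,2)$ form at $(p,H)$ "is exactly the degeneracy making this system admit a one-dimensional solution space" — \emph{is} the content of the theorem; the paper proves the corresponding equivalence via the polar-form characterization \ref{char} ($o\in\tau(F'_A)$ iff both $P_u\times\lbrace y\rbrace$ and $\lbrace x\rbrace\times P^{\perp}_u$ are tangent to $V_A$ at $o$, iff the six partials $f_{x,i},f_{y,i}$ vanish, which also pins down the scheme structure). Second, nothing in your argument rules out a solution space of dimension two or more over a singular point, nor two points of $F-\lbrace u\rbrace$ mapping to the same $(p,H)$: the paper handles this by observing that a length-$2$ subscheme of $F'_A$ collapsed by the projection from $u$ spans a line meeting $\mathbb G$ in length $\geq 3$, hence contained in $\mathbb G$ since $\mathbb G$ is an intersection of quadrics, contradicting finiteness of $F_A$ — an ingredient (finiteness plus quadratic generation) entirely absent from your proposal. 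Finally, the obstacle you single out is vacuous: $\mathbb T_u\cap\mathbb G$ is exactly the locus of planes meeting $P_u$ in at least a line, and $A\cap W^2_u=\wedge^3U$ gives $\mathbb P(A)\cap\mathbb T_u=\lbrace u\rbrace$, so every plane of $F-\lbrace u\rbrace$ meets $P_u$ in a single reduced point and spans a hyperplane with it; rank-one $\phi$ never occurs, while the real degenerations are the chart and graph failures described above.
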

This relates the study of Morin configurations $F$ of higher length to the study of hypersurfaces $V$ of bidegree $(2,2)$ in $\mathbb P^2 \times \mathbb P^2$
such that $\Sing V$ is finite. In particular let $V_{\ell} \subset \mathbb P^2 \times \mathbb P^2$ be the threefold associated to the maximal configuration $F_{\ell}$. In the paper we describe its very interesting geometry as follows. \medskip \par
$\circ$ \it $V_{\ell}$ contains a configuration of eight planes $a_i \times \mathbb P^2$ and $\mathbb P^2 \times b_j$, $1 \leq i,j \leq 4$, such that the sets $\alpha = \lbrace a_1 \dots a_4 \rbrace$ and $\beta = \lbrace b_1 \dots b_4 \rbrace$ are in general position in $\mathbb P^2$.\medskip  \par
$\circ$ Let $p: V_{\ell} \to \mathbb P^2$ be the first, (second),  projection and $\Gamma_p \subset \mathbb P^2$ its branch sextic. Then $\Gamma_p$ is the union of the singular conics of the 
pencil whose base locus is $\alpha$, ($\beta$). \medskip \par \rm
Let $\mathcal I$ be the ideal sheaf of the set $\lbrace (a_1,b_1) \dots (a_4,b_4) \rbrace \subset \mathbb P^2 \times \mathbb P^2$. Then $\vert \mathcal I(1,1) \vert$ defines a degree $2$ rational map $\pi: \mathbb P^2 \times \mathbb P^2 \to \mathbb P^4$, recently considered in \cite{FV, CKS}. Its branch divisor is the \it Igusa quartic threefold\rm, that is the dual of the Segre cubic. As a consequence of the mentioned results and of our description, it follows:
 \begin{thm} $V_{\ell}$ is the ramification divisor of $\pi$ and $\pi(V_{\ell})$ is the Igusa quartic. \end{thm}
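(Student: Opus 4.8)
The plan is to deduce the second assertion from the first. Indeed, for a degree two rational map the ramification divisor maps birationally onto the branch divisor; hence once we know that $V_{\ell}$ is the ramification divisor $R$ of $\pi$ we obtain $\pi(V_{\ell}) = \pi(R) = B$, where $B$ is the branch divisor. That $B$ is the Igusa quartic is the recalled fact of \cite{FV, CKS}; alternatively it follows from the projective duality between the Igusa quartic and the Segre cubic together with the computation that the discriminant in $\PP^4_Y$ is twice $\Delta_Y$. So the whole content of the theorem is the identification $V_{\ell} = R$.

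First I would compute the class of the ramification divisor. Let $\mu : X \to \PP^2\times\PP^2$ be the blow up of the four base points $(a_i,b_i)$, with exceptional divisors $E_1,\dots,E_4$, and let $H_1,H_2$ be the pullbacks of the two hyperplane classes. For general $\alpha,\beta$ the system $\vert \mathcal I(1,1)\vert$ has the four points as its only base locus and $\mu$ resolves $\pi$, so the induced morphism $\tilde\pi : X \to \PP^4$ satisfies $\tilde\pi^{*}\oo_{\PP^4}(1) = H_1 + H_2 - \sum_i E_i$. Since $K_X = -3H_1 - 3H_2 + 3\sum_i E_i$ and $\tilde\pi^{*}K_{\PP^4} = -5\left(H_1 + H_2 - \sum_i E_i\right)$, Riemann--Hurwitz gives
\begin{equation}
R \;=\; K_X - \tilde\pi^{*}K_{\PP^4} \;=\; 2H_1 + 2H_2 - 2\sum_i E_i .
\end{equation}
Thus $\mu_{*}R$ is a reduced $(2,2)$ hypersurface with a double point at each $(a_i,b_i)$ --- exactly the class and singularity type of $V_{\ell}$, which is the $(2,2)$ hypersurface produced in section $2$ and which is singular at $(a_i,b_i)$ because the two planes $a_i\times\PP^2$ and $\PP^2\times b_i$ of its eight plane configuration cross there.

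Next I would pin down the identification $V_{\ell} = \mu_{*}R$. The eight planes are contracted by $\pi$: restricting a form of $\vert \mathcal I(1,1)\vert$ to $a_i\times\PP^2$ gives a linear form on $\PP^2$ vanishing at $b_i$, so $\pi$ maps $a_i\times\PP^2$ to a line by projection from $b_i$, and symmetrically for $\PP^2\times b_j$. A contracted subvariety lies in the locus where $d\pi$ drops rank, hence the eight planes lie on $\mu_{*}R$; they also lie on $V_{\ell}$ by construction. The two conic bundle structures are compatible with this: the branch sextic $\Gamma_{p}$ of a projection of $V_{\ell}$ is the union of the six lines joining the points of $\alpha$ (respectively $\beta$), i.e. the three singular members of the pencil of conics, which records exactly where the two sheets of $\pi$ collide. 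Combining the coincidence of class, of the four double points and of the eight planes, one concludes $V_{\ell} = \mu_{*}R$, either by the uniqueness of a $(2,2)$ hypersurface subject to this incidence or by the explicit model of \cite{FV, CKS}. A reassuring check is the node count: by Theorem \ref{biregular f_u}, $\Sing V_{\ell}$ is biregular to $F_{\ell} - \lbrace u \rbrace$, hence consists of $19$ points; the four base points are contracted by $\pi$, while the remaining $15$ map to the $15$ nodes of the Igusa quartic.

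The main obstacle is precisely this last identification, namely upgrading the numerical match to a genuine equality $V_{\ell}=\mu_{*}R$. One must be careful that the resolved map $\tilde\pi$ is \emph{not} finite, since it contracts the eight planes; these contracted loci have codimension two and therefore do not enter the divisorial ramification class computed above, but this has to be verified so that Riemann--Hurwitz is applied to the genuinely divisorial part of the non-\'etale locus. The cleanest way to remove any remaining ambiguity is to choose coordinates placing $\alpha$ and $\beta$ in general position, write down the five generators of $\mathcal I(1,1)$ explicitly, and check that the Jacobian locus of $\pi$ is the equation of $V_{\ell}$; this simultaneously exhibits $\pi(V_{\ell})$ as the quartic dual to $\Delta_Y$, that is the Igusa quartic.
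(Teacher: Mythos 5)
Your overall architecture is reasonable, and its endgame is in fact the paper's proof: the paper likewise quotes \cite{CKS} for the statement that the branch divisor of the degree two map is the Igusa quartic $\Delta^*$, and then settles the identification of $V_{\ell}$ with the ramification by an explicit computation, using the displayed equation of $V^{ram}=V_{\ell}$ in section 4 together with the equations of the projection ("relying on its equation, and on the equations of $\phi$, it is not difficult to compute the image of $V^{ram}$ by $\phi$"). Your Riemann--Hurwitz class computation $R=2H_1+2H_2-2\sum_i E_i$ on the blow-up is correct and is a nice supplement the paper does not spell out, as is the observation that the eight contracted planes must lie in the ramification divisor (the Jacobian section is nonzero, its zero scheme is pure of codimension one, and it vanishes along the codimension two contracted planes, which are therefore contained in it).

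However, the step you lean on to avoid the computation --- ``uniqueness of a $(2,2)$ hypersurface subject to this incidence'' --- is false, and this is a genuine gap. The paper itself exhibits in section 4 the \emph{three-dimensional} linear system $\vert\mathcal I_4(2,2)\vert$ of all $(2,2)$ hypersurfaces containing the eight planes, namely $\lambda q_1(x)q_1(y)+\mu q_2(x)q_2(y)+\nu q_1(x)q_2(y)+\rho q_2(x)q_1(y)=0$; \emph{every} member contains the eight planes and is singular at all sixteen points $(u_i,v_j)$, in particular at your four base points $(a_i,b_i)$. So the class, the four double points and the eight planes only confine $\mu_*R$ to a $\mathbb P^3$ of candidates. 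What singles out $V_{\ell}$ inside this family is the finer data that it has $19$ singular points, equivalently that its branch sextic is the union of the three \emph{singular} conics of the pencil through $\alpha$, and verifying that the ramification divisor of $\pi$ is that particular member is exactly the explicit Jacobian computation you relegate to a final remark --- which is the paper's actual proof, so your shortcut collapses onto it. Your ``reassuring check'' cannot be upgraded to close the gap as stated, for two reasons: the Igusa quartic has no nodes --- its singular locus is the union of $15$ double lines, so ``the $15$ nodes of the Igusa quartic'' is not meaningful --- and in any case the count would have to be established for $\mu_*R$, not for $V_{\ell}$, since it is precisely the equality of the two that is at stake.
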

 (3) Going back to the quintic Del Pezzo $Y$, let  $C \in \vert C_{\ell} \vert$ be a reduced singular curve and $\mathbb P^5_C$ the $5$-space of the quadrics through $C$.   As in the case of $C_{\ell}$ we can reconstruct from $\Sing C$, in the Grassmannian of planes of $\mathbb P^5_C$, the family of planes
  \begin{equation}
 F_C := \lbrace P_1 \dots P_5, P_z \ z \in \Sing C \rbrace.
 \end{equation}
where $P_1 \dots P_5$ are the nets of rank $4$ quadrics through $Y$ and $P_z \subset \mathbb P^5_C$ is the net of quadrics  which are singular at $z$. The next theorem is proven in section 8.   
  \begin{thm} Let $\Sing C$ be not in a hyperplane then $F_C$ is a Morin configuration.
\end{thm}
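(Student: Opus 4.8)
The plan is to verify the intrinsic characterization of a Morin configuration recalled above. Write $\mathbb{P}^5_C=\mathbb{P}(W)$ with $W=H^0(\mathcal{I}_C(2))$ the $6$-dimensional space of quadrics through $C$, and let $\mathbb{P}^5=\mathbb{P}(U)$ be the original space containing $Y\supset C$, so that a quadric $Q\in W$ is a symmetric form on $U$. Let $w$ be the symplectic pairing on $\wedge^3W$ and $\mathbb{G}=G(3,W)$ the Grassmannian of planes of $\mathbb{P}(W)$. Since $w(\omega_u,\omega_v)=\omega_u\wedge\omega_v$, two planes of $\mathbb{P}(W)$ are incident if and only if their Pl\"ucker points are $w$-orthogonal. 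Hence, setting $A:=\langle F_C\rangle\subset\wedge^3W$, the assertion reduces to three claims: (a) every member of $F_C$ is a genuine plane, i.e. a point of $\mathbb{G}$; (b) any two members are incident, so that $A$ is $w$-isotropic; and (c) $\dim A=10$, so that $A\in LG(10,\wedge^3W)$ is Lagrangian, together with the completeness $F_C=\mathbb{P}(A)\cdot\mathbb{G}$.

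For (a), the five nets $P_1,\dots,P_5$ are planes by the rank stratification of $\mathbb{P}^4_Y$ recalled above. For a node $z\in\Sing C$ consider the evaluation $\mathrm{ev}_z\colon W\to z^{\perp}\subset U^{*}$, $Q\mapsto Q(z,\,\cdot\,)$, which lands in $z^\perp$ because $z\in C$ forces $Q(z,z)=0$; its kernel is the cone over $P_z$. A local computation at the node shows $\mathrm{ev}_z$ has rank exactly $3$, so $P_z$ is a plane. For (b) there are three types of pairs. Two of the $P_i$ always meet, being five planes inside the hyperplane $\mathbb{P}^4_Y\cong\mathbb{P}^4$. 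For $P_i\cap P_z$ one shows, using the geometry of the net of rank $4$ quadrics, that the vertex lines of the quadrics of $P_i$ cover $Y$; as $z\in Y$ lies on such a vertex line, the corresponding rank $4$ quadric is singular at $z$ and so lies in $P_i\cap P_z$.

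The crucial and hardest case of (b) is $P_{z_1}\cap P_{z_2}$: it amounts to producing a quadric through $C$ that is singular along the whole line $\ell=\langle z_1,z_2\rangle$, equivalently having both $z_1,z_2$ in its radical. I would obtain it by projecting $C$ from $\ell$. Since $\ell$ passes through the two nodes it meets $C$ with multiplicity at least $4$, so $\pi_\ell(C)\subset\mathbb{P}^3$ has degree at most $6$; a dimension count on $H^0(\mathcal{I}_{\pi_\ell(C)}(2))$ then yields a quadric of $\mathbb{P}^3$ through $\pi_\ell(C)$, whose pullback is the desired quadric of $W$ singular along $\ell$. Making this count unconditional, uniformly over all pairs of nodes -- controlling the length of $\ell\cap C$ and the genus of $\pi_\ell(C)$ -- is the main obstacle of the proof, and is exactly where the special geometry of $C\in|C_{\ell}|$ as a quadric section of the quintic Del Pezzo $Y$ is needed.

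Granting (b), the span $A=\langle F_C\rangle$ is $w$-isotropic, whence $\dim A\le 10$. To conclude (c) I must show $\dim A=10$ and that $\mathbb{P}(A)\cdot\mathbb{G}$ contains no plane beyond the $P_i$ and the $P_z$. This is where the hypothesis that $\Sing C$ is not contained in a hyperplane enters: it guarantees that no nonzero quadric of $W$ is singular at all the nodes, since such a quadric would be singular along $\langle\Sing C\rangle=\mathbb{P}^5$; thus the planes $P_z$ have empty common intersection and their Pl\"ucker points are spread out enough to force the span to be a full Lagrangian $\mathbb{P}^9$. The same nondegeneracy, combined with the reconstruction of the configuration from $\Sing C$ as in the treatment of $C_{\ell}$, shows that $\mathbb{P}(A)\cdot\mathbb{G}$ reduces exactly to $F_C$, giving completeness and hence that $F_C$ is a Morin configuration.
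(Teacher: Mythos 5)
Your skeleton is the right one, and it matches the paper's: reduce to (a) each member being a plane, (b) pairwise incidence, i.e.\ isotropy of $A := [F_C]$, and (c) $\dim A = 10$ plus completeness. But at each of the three hard points you stop short of a proof. The crucial case $P_{z_1}\cap P_{z_2}\neq\emptyset$ is exactly where you flag "the main obstacle", and the paper's way past it is to project \emph{the surface together with the curve} from $\ell=\langle z_1 z_2\rangle$, not the curve alone: since $\ell$ meets $Y$ in the two nodes, $\beta(Y)\subset\mathbb P^3$ is an integral cubic surface and $\beta(C)$ is a degree $6$, arithmetic genus $4$ nodal canonical curve lying on it; hence $\beta(C)=\overline Q\cdot\beta(Y)$ for a quadric $\overline Q$, and $Q=\beta^*\overline Q$ is a rank $4$ quadric through $C$ singular along $\ell$, so $Q\in P_{z_1}\cap P_{z_2}$. (The residual case $\ell\subset Y$ is covered by the rank $4$ quadrics through $Y$ singular along the ten lines of $Y$.) This renders your dimension count on $H^0(\mathcal I_{\pi_\ell(C)}(2))$ automatic, because the quadric section structure and the genus of the image come for free from the cubic surface; without it your count is conditional, as you yourself admit, so as written this step is a genuine gap.

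The two remaining gaps are (c). For $\dim A=10$, "spread out enough to force the span to be a full Lagrangian" is not an argument, and the mechanism you propose (the $P_z$ having empty common intersection) does not by itself yield a $9$-dimensional span. The paper argues via the projection $r:\wedge^3 W\to\wedge^2 W_Y$ from $\langle h\rangle$: $r$ sends the Pl\"ucker point of $P_z$ to the pencil $L_z$ of quadrics through $Y$ singular at $z$, i.e.\ to the point $\iota(z)$ of the anticanonical model $Y_h$; the hypothesis that $\Sing C$ spans $\mathbb P^5$ translates into $r(A)$ being the full $6$-dimensional space $H_Y$, and since $[h]\subseteq\Ker(r|_A)$ is $4$-dimensional one gets $\dim A\geq 10$, with isotropy giving equality. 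For completeness -- which is part of the definition of a Morin configuration and which you merely assert -- the paper proves a real statement: if $N$ is any net of quadrics through $C$ with $[N]\in\mathbb P(A)\cap\mathbb G$ and $N\neq P_i$, then the pencil $N\cap\mathbb H$ meets all five planes $P_1,\dots,P_5$, hence equals $L_v$ for some $v\in Y$ (the points of $Y_h\subset\mathbb G_{\mathbb H^*}$ are precisely such lines of $\mathbb H$); then $\Sing C\subset\Sing X$ for the base scheme $X=Q\cdot B_v$ of $N$, and finally $v\in\Sing C$, because otherwise the polar form $q_v$ of $v$ with respect to $Q$ would vanish on $\Sing C$, contradicting $h^0(\mathcal I_{\Sing C}(1))=0$; this forces $N=P_v$. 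Note the hypothesis thus does double duty, in the dimension count \emph{and} in the completeness argument, whereas your proposal invokes it only for the span. (A smaller point of the same kind: your claim that $\mathrm{ev}_z$ has rank exactly $3$ needs the paper's computation that the quadrics through $Y$ singular at $z$ form a pencil, extended by one dimension using that $C$ is a quadric section singular at $z$.)
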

The special feature of $F_C$ is that $\lbrace P_1, \dots, P_5 \rbrace$ is a smooth linear section of the Grassmannian $\mathbb G_Y$
of planes of $\mathbb P^4_Y$, see \cite[8.5.3]{Do}. Then the corresponding points $p_1 \dots p_5$ only span a $3$-space.  Since a finite Morin configuration spans a $9$-space, it follows that $F_C$ has length  $k \geq 11$ and, moreover, $\Sing C$ necessarily spans $\mathbb P^5_C$. \par
\begin{defi} \it A  Morin-Del Pezzo configuration is a finite Morin configuration which contains with multiplicity one a $5$-tuple projectively equivalent to $\lbrace P_1 \dots P_5 \rbrace$. \end{defi} In sections 6, 7, 8 we 
construct an integral family whose members are the Morin-Del Pezzo  configurations and describe their properties.  Let $F = \mathbb P(A) \cdot \mathbb G$ be one of these and $V_A \subset \mathbb P^2 \times \mathbb P^2$ the bidegree $(2,2)$ hypersurface defined by $(A,u)$. We prove that: \it $F = F_C$ for some $C \in \vert C_{\ell} \vert$  and that $V_A$ contains a plane. \rm Then $V_A$ is rational and is reconstructed  from $C$ as follows. In the ambient space of $C$ the base locus of the net $P_5$ is a Segre product $\mathbb P^1 \times \mathbb P^2$.  Let $\mathcal J$ be the ideal sheaf of $C$ in it, then $\vert \mathcal J(2,2) \vert$ defines a rational map $q: \mathbb P^1 \times \mathbb P^2 \to \mathbb P^2$. Let $p: \mathbb P^1 \times \mathbb P^2 \to \mathbb P^2$ be the projection map, then:
\begin{thm}  $q \times p: \mathbb P^1 \times \mathbb P^2 \to \mathbb P^2 \times \mathbb P^2$ is a birational embedding with image $V_A$. \end{thm}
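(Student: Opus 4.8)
The plan is to take $u=P_5$, make every identification explicit, and then realize $q\times p$ as a birational parametrization of the conic bundle underlying $V_A$.

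First I would fix the identifications. With $u=P_5$ we have $P_u=\mathbb P(W_5)$, where $W_5\subset W$ is the net of rank-$4$ quadrics through $Y$ whose common base locus is the Segre scroll $S=\mathbb P^1\times\mathbb P^2$, while $P_u^\perp$, the net of hyperplanes of $\mathbb P(W)=\mathbb P^5_C$ containing $P_u$, is $\mathbb P\big((W/W_5)^\ast\big)$. The map $q$ sends $x\in S$ to the hyperplane of quadrics through $C$ vanishing at $x$; since every quadric of $W_5$ already vanishes on $S$, this functional descends to $W/W_5$ and lands in $P_u^\perp$, so $q$ is exactly the rational map attached to $\vert\mathcal J(2,2)\vert$, defined away from $C$. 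The projection $p\colon S\to\mathbb P^2$ onto the second factor I would identify with $P_u=\mathbb P(W_5)$ via the standard correspondence between the $2\times2$ minors cutting out $S$ and the points of the net $P_5$. After these identifications $q\times p$ is a rational map $S\dashrightarrow P_u\times P_u^\perp=\mathbb P^2\times\mathbb P^2$ in which the $p$-coordinate records the factor $t\in\mathbb P^2$ of $S$, and composing with the projection onto that factor recovers $p$ itself.

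Next I would establish the containment of the image in $V_A$, which is the heart of the matter. The construction of Section $2$ realizes $V_A$ as the bidegree $(2,2)$ hypersurface of pairs $(x,H)\in P_u\times P_u^\perp$ cut out by the incidence/rank condition attached to $(A,u)$; I would translate this condition, through the identifications above, into the statement that $H=q(x)$ contains the evaluation of the quadrics of $W/W_5$ at the point $p(x)$, and check that it holds identically along $q\times p$. This exhibits the image $V'=\overline{(q\times p)(S)}$ as a conic bundle over $\mathbb P^2$ whose fibre over $t$ is the conic $q(\mathbb P^1\times\{t\})$, with coefficients quadratic in $t$, hence $V'\subseteq V_A$. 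Because $\Sing V_A$ is finite, any factorization of its $(2,2)$ equation would force a two-dimensional singular locus, so $V_A$ is irreducible; as $V'$ is an irreducible threefold (the general ruling of $S$ is not contracted by $q$, so $\dim V'=3$) contained in the irreducible threefold $V_A$, we get $V'=V_A$. The rulings on which all three sections of $\vert\mathcal J(2,2)\vert$ vanish are the indeterminacy of $q$, and they account for the plane contained in $V_A$, so the map is birational but not everywhere defined.

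Finally I would deduce birationality from the now-established bidegree. Since $V_A$ is an irreducible hypersurface of bidegree $(2,2)$, its general fibre over $P_u^\perp$ is a smooth conic; the three forms of $W/W_5$ therefore restrict to an independent triple in $H^0\big(\mathbb P^1,\mathcal O(2)\big)$ on the general ruling, so $q$ maps that ruling by the complete $2$-uple system, i.e. isomorphically onto the fibre conic. Hence $q\times p$ is injective on the general fibre and separates fibres through $p$, so it is generically injective, i.e. a birational embedding with image $V_A$. I expect the second step to be the main obstacle: converting the abstract definition of $V_A$ from $(A,u)$ into the concrete conic bundle built from the quadrics through $C$, and verifying that the two conic bundles coincide rather than merely share generic structure. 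The biregular correspondence $F-\{u\}\cong\Sing V_A$ of Theorem \ref{biregular f_u}, with the nodes of $C$ producing the planes $P_z$, gives an independent check on the singular loci once containment is in hand.
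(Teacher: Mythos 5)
Your first step is essentially sound: with $u=h_5$ the identifications $P_u=\mathbb P(W_5)$ and $P_u^{\perp}=\mathbb P((W/W_5)^{*})$ are correct, and your assignment of factors ($q$ to $P_u^{\perp}$, $p$ to $P_u$) is consistent with the paper's picture after its implicit relabeling (intrinsically the plane contained in $V_A$ is $P_u\times\lbrace [\mathbb H]\rbrace$, since $\gamma^{\perp}(h_i)=\mathbb H$ for all $i$ while the points $\gamma(h_i)=Q_{i5}$ are distinct). The final fibrewise argument for generic injectivity is also routine. But the step you yourself flag as ``the main obstacle'' is not a detail to be checked --- it \emph{is} the theorem, and your plan contains no mechanism for it. You never prove the containment $(q\times p)(\mathbb P^1\times\mathbb P^2)\subseteq V_A$. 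The translation you propose does not even typecheck: $p(x)$ is a point of the net $P_5=\mathbb P(W_5)$, i.e.\ a quadric of $\mathbb P^5$, not a point of $\mathbb P^5$ at which quadrics of $W/W_5$ can be evaluated. Recall that $V_A$ is defined through the symmetric forms $\langle\, ,\rangle_y$ on the $4$-dimensional spaces $A_y\subset A$ of Section 2; verifying that $v_A$ pulls back to zero under $q\times p$ would require producing, over a general $s\in\mathbb P^1\times\mathbb P^2$, the vector $a\in A_y$ lying above $(p(s),q(s))$ and checking $a_1\wedge a_2=0$, where $A$ is known only through its generators $s_1,\dots,s_5$ and the vectors $n_o$ attached to the nets $N_o$, $o\in\Sing C$. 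That computation is precisely the hard content, and nothing in the proposal carries it out. Nor can the suggested consistency check substitute for it: containing the plane $P_h$ and being singular at $f_N(\Sing C)$ impose far fewer conditions than the dimension of $\vert\mathcal O_{\mathbb P^2\times\mathbb P^2}(2,2)\vert$ (for length-$11$ configurations $\Sing C$ has only six points), so a second $(2,2)$ hypersurface sharing these data is not excluded, and ``$V'\subseteq V_A$ plus irreducibility'' never gets off the ground.

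The paper closes exactly this gap by arguing in the opposite direction, and none of its ingredients appear in your plan. It starts from $V_A$ and projects from the plane $P_h\subset V_A$: blowing up $P_h$ gives a $\mathbb P^1$-bundle $\tilde p_h:\mathbb P\to\mathbb P^1\times\mathbb P^2$, and since the strict transform $\tilde V_A$ lies in $\vert 2\tilde H-E\vert$ it has degree one on the fibres, so $\tilde p_A:\tilde V_A\to\mathbb P^1\times\mathbb P^2$ is birational (this also identifies the contracted divisor as a $\mathbb P^1$-bundle over $C$, and exhibits $P_h$ as the image of the quintic del Pezzo $Y_h$, i.e.\ of $E_h$ blown down --- so your side remark is off: the indeterminacy of $q$ is the curve $C$, no ruling lies in the base locus, and the plane arises from $Y_h$, not from rulings). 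The inverse $\sigma\circ\tilde p_A^{-1}$ is then pinned down cohomologically: its linear system $\mathbb J$ satisfies $Y_h+\vert\mathcal O(1,1)\vert\subseteq\mathbb J\subseteq\vert\mathcal J_C(2,3)\vert$, has dimension $8$ because $V_A$ spans $\mathbb P^8$, and the count $h^0(\mathcal J_C(2,3))=9$ (via $0\to\mathcal O_{Y_h}(1,1)\to\mathcal J_C(2,3)\to\mathcal O_{Y_h}(0,1)\to0$) forces $\mathbb J=\vert\mathcal J_C(2,3)\vert$; finally the multiplication map $\mu:H^0(\mathcal O(0,1))\otimes H^0(\mathcal J_C(2,2))\to H^0(\mathcal J_C(2,3))$ is shown to be an isomorphism, which is what factors the inverse as $q\times p$ followed by the Segre embedding and identifies the image with $V_A$. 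To repair your proof you need either some version of this backward argument (degree one on fibres, the $h^0$ count, the isomorphism $\mu$) or an honest direct evaluation of $v_A$ on the generators of $A$; as it stands, the central containment is asserted, mis-stated, and unproven.
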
 
These results are used to deduce theorem \ref{uni} and enumerate configurations. This is quickly done in section 9. Then some concluding remarks follow: we note that a stable canonical $C$ of genus $g \geq 7$ defines an analogous scheme of incident $(g-4)$-spaces in the dual of the space $\mathbb I_C$ of quadrics through $C$. That is $F'_C := \lbrace P_z, \ z \in \Sing C \rbrace$, where $P_z$ is the orthogonal of $\mathbb I_z := \lbrace Q \in \mathbb I_C \ \vert \ z \in \Sing Q \rbrace$. The involved dimensions satisfy the mentioned Zak's equality. This makes interesting the question: \begin{equation} \text{ \it when $F'_C$ is a Morin configuration and has maximal cardinality?}
\end{equation}
Here canonical graph curves, like $C_{\ell}$, could come into play. These are union of $2g-2$ lines and have $3g-3$ nodes. Each is uniquely defined by its dual associated graph. For $C_{\ell}$ this is the well known Petersen graph. 
 $$
\includegraphics[width=4cm]{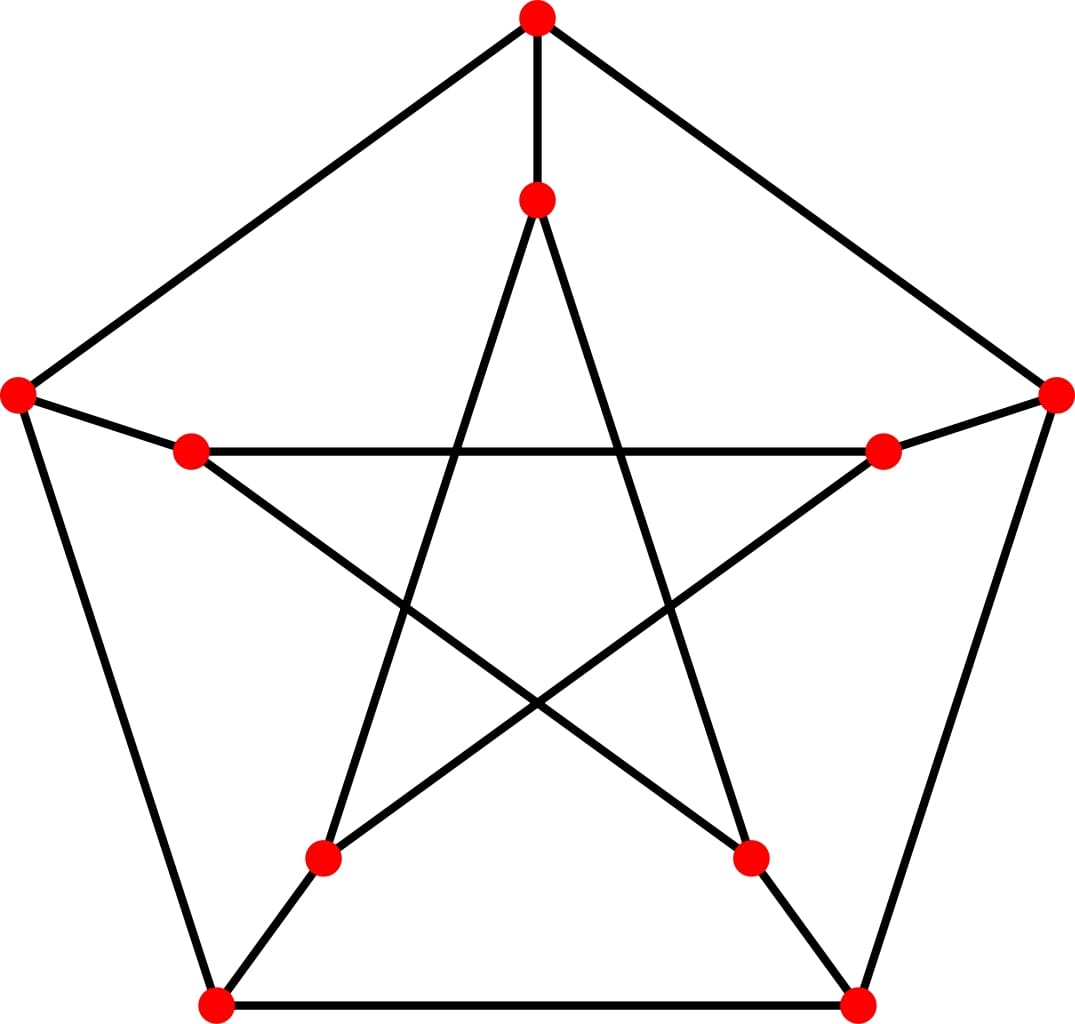}
$$
We discuss some example generalizing $C_{\ell}$ and chances that $3g-3$ be the maximal cardinality. In this paper we also revisit O'Grady's bound for $g = 6$ and  discuss realizations of singular plane sextics as $3 \times 3$ determinant of quadratic forms, see section 3, remark \ref{determ} and  \cite{O}. \medskip \par
\underline {Aknowledgements} The authors profited of useful comments from C.~Ciliberto, I. ~Dolgachev, A.~Iliev, M.~Kapustka, K.~Ranestad, C.~Shramov, F.~Viviani. \par
\underline  {Further notations} $\langle X \rangle$  linear span of $X$. [X] vector space generated by $X$.

   \section{Morin configurations of planes in $\mathbb P^5$ and $V$-threefolds} 
In this section we start studying finite Morin configurations of planes in $\mathbb P^5 = \mathbb P(W)$. We keep our conventions and begin from the point $u$, such that $[u]=U$, we have previously fixed in the Grassmannian $\mathbb G$. Notice that $u$ defines a natural filtration of $\wedge^3 W$, say
\begin{equation}
\label{filt} \wedge^3 W := W^0_u  \ \supset W^1_u \supset \ W^2_u \supset W^3_u := \wedge^3 U.
\end{equation} By definition $W^i_u \subset \wedge^3 W,  \ 1 \leq i \leq 3$,  is the  image of the pairing
\begin{equation} \wedge^iU \times \wedge^{3-i} W \to \wedge^3 W, \end{equation}  defined by the wedge product. Notice that $W^2_u / W^3_u$ is naturally isomorphic to the Zariski tangent space to $\mathbb G$ at $u$. Hence its projective completion is embedded as
\begin{equation}
\label{tang} \mathbb T_u := \mathbb P(W^2_u) \subset \mathbb P(\wedge^3W).
\end{equation}
By definition the   \it tangential projection of $\mathbb G$ from $u$ \rm is the linear map $ \mathbb P(\wedge^3 W) \to \mathbb P^9 $ of center the $9$-space $\mathbb T_u$. We will be more interested to its restriction
\begin{equation}
\tau: \mathbb H_u \to \mathbb P^8
\end{equation}
to the hyperplane $ \mathbb H_u := \mathbb P(W^1_u) \subset \mathbb P(\wedge^3 W)$.
We  point out that the target space of $\tau$ is $$ \mathbb P^8 = \mathbb P(W^1_u / W^2_u). $$
Moreover $\mathbb H_u$ cuts on $\mathbb G$ the codimension $1$ Schubert cycle $\mathbb G_u$ defined by $P_u$, that is
\begin{equation} 
\label{G_u}    \mathbb G_u := \lbrace e \in \mathbb G \vert P_e \cap P_u \neq \emptyset \rbrace = \mathbb H_u \cdot \mathbb G.
\end{equation}
It will be useful to describe $\tau \vert \mathbb G_u$. Let $e \in \mathbb G_u$ and $e \notin \mathbb T_u$, then  
$e = [u_1 \wedge e_2 \wedge e_3]$ so that $U \cap E =  [u_1]$ and  $\lbrace x \rbrace = P_u \cap P_e$ with $x = [u_1]$. This defines a rational map
\begin{equation}
\gamma: \mathbb G_u \to P_u
\end{equation}
such that $\gamma(e) := x$. Now
let $P^{\perp}_u \subset \vert \mathcal O_{\mathbb P^5}(1) \vert$ be the net of hyperplanes through $P_e$. It is also clear that $e$ uniquely defines an element $y \in P^{\perp}_u$. This is the hyperplane
in $\mathbb P^5$ generated by $P_u$ and the points $[e_2]$ and $[e_3]$. Notice also that we have $$ P^{\perp}_u = \mathbb P(\wedge^2 W/U) $$ via the assignment $y \to [(e_2 \mod U) \wedge (e_3 \mod U)]$. This defines a rational map
\begin{equation}
\gamma^{\perp}: \mathbb G_u \to P^{\perp}_u
\end{equation}
such that $\gamma^{\perp}(e) := y$. Finally, since $\mathbb P^8 = \mathbb P(W^1_u / W^2_u)$, we have also a natural map
\begin{equation}
s: P_u \times P^{\perp}_u \to \mathbb P^8
\end{equation}
such that $s(x,y) := [u_1 \wedge e_2 \wedge e_3 \mod W^2_u]$. Leaving some details to the reader, we conclude that $s$ is the Segre embedding of $P_u \times P^{\perp}_u$. Hence the next lemma follows.
  \begin{prop} $\tau: \mathbb G_u \to \mathbb P^8$ factors as in the next diagram:
$$  \begin{CD}
{\mathbb G_u} @>{\gamma \times \gamma^{\perp}}>> {P_u \times P^{\perp}_u} @>s>> {\mathbb P^8} \\
\end{CD}
$$
\end{prop}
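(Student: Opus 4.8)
The plan is to reduce the whole statement to a single linear-algebra identification, after which the factorization becomes essentially tautological. The heart of the matter is a canonical isomorphism
$$\phi\colon U \otimes \wedge^2(W/U) \to W^1_u/W^2_u,$$
induced by the wedge product via $u_1 \otimes (\overline{w_2} \wedge \overline{w_3}) \mapsto [\,u_1 \wedge w_2 \wedge w_3 \bmod W^2_u\,]$, where $\overline{w}$ denotes the class of $w$ in $W/U$. First I would check that this is well defined: replacing the lifts $w_2, w_3$ of $\overline{w_2},\overline{w_3}$ by $w_2 + f_2, w_3 + f_3$ with $f_2, f_3 \in U$ changes $u_1 \wedge w_2 \wedge w_3$ only by the terms $u_1\wedge w_2\wedge f_3 + u_1\wedge f_2\wedge w_3 + u_1\wedge f_2\wedge f_3$, each of which contains at least two factors from $U$ and hence lies in $W^2_u$; so the class modulo $W^2_u$ depends only on $u_1$ and on $\overline{w_2}\wedge\overline{w_3}$. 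The map $\phi$ is visibly surjective, since $W^1_u$ is spanned by the elements $u_1 \wedge w_2 \wedge w_3$ with $u_1\in U$; and as $\dim\bigl(U \otimes \wedge^2(W/U)\bigr) = 3\cdot 3 = 9 = \dim\bigl(W^1_u/W^2_u\bigr)$, it is an isomorphism.

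Under $\phi$ the target of $\tau$ becomes $\mathbb P^8 = \mathbb P\bigl(U\otimes\wedge^2(W/U)\bigr)$. Since $P_u = \mathbb P(U)$ and $P^{\perp}_u = \mathbb P\bigl(\wedge^2(W/U)\bigr)$, the defining formula $s(x,y) = [\,u_1 \wedge e_2 \wedge e_3 \bmod W^2_u\,] = [\phi(u_1 \otimes (\overline{e_2}\wedge\overline{e_3}))]$ exhibits $s$ as the composite of $\phi$ with the tensor map $([u_1],[\overline{e_2}\wedge\overline{e_3}]) \mapsto [u_1 \otimes (\overline{e_2}\wedge\overline{e_3})]$. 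The latter is the Segre embedding of $\mathbb P(U) \times \mathbb P\bigl(\wedge^2(W/U)\bigr)$; in particular $s$ is a morphism, independent of the chosen representatives, which is exactly the claim made just before the statement.

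It then remains to verify the factorization on a dense open subset. I would work on $\mathbb G_u \setminus \mathbb T_u$, which is precisely the locus where $P_e \cap P_u$ is a single reduced point: if $\dim(P_e\cap P_u)\geq 1$ then $E$ contains two independent vectors of $U$, whence $e \in \mathbb P(W^2_u) = \mathbb T_u$, and conversely a decomposable element of $W^2_u$ forces $\dim(E\cap U)\geq 2$. This is also exactly the indeterminacy locus of the projection from $\mathbb T_u$ and of $\gamma,\gamma^{\perp}$, so that comparing $\tau\vert_{\mathbb G_u}$ and $s\circ(\gamma\times\gamma^{\perp})$ as rational maps is legitimate. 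For $e = [u_1 \wedge e_2 \wedge e_3]$ with $U\cap E = \langle u_1\rangle$ one has by definition $\gamma(e) = [u_1] = x$ and $\gamma^{\perp}(e) = y$ with $y$ corresponding to $[\overline{e_2}\wedge\overline{e_3}]$, while $\tau(e) = [\,e \bmod W^2_u\,] = [\,u_1\wedge e_2\wedge e_3 \bmod W^2_u\,] = s(x,y)$. Hence $\tau\vert_{\mathbb G_u} = s\circ(\gamma\times\gamma^{\perp})$ on a dense open set, and, both sides being rational maps to projective space, they coincide, which gives the diagram.

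The only step that is not pure bookkeeping is the well-definedness and bijectivity of $\phi$; everything else is unwinding the definitions of $\gamma$, $\gamma^{\perp}$, $\tau$ and of the Pl\"ucker coordinates. A secondary point worth stating carefully is the identification of the common indeterminacy locus $\mathbb G_u \cap \mathbb T_u$ with $\{e : \dim(P_e \cap P_u) \geq 1\}$, since this is what guarantees that $\gamma\times\gamma^{\perp}$ and $\tau$ are regular on the same dense open set, so that their agreement there propagates to an equality of rational maps and the commutative diagram is genuinely valid.
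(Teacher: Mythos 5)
Your proof is correct and follows the same route as the paper, which defines $\gamma$, $\gamma^{\perp}$, and $s$ and then asserts, ``leaving some details to the reader,'' that $s$ is the Segre embedding of $P_u \times P^{\perp}_u$, whence the factorization. Your canonical isomorphism $\phi\colon U \otimes \wedge^2(W/U) \to W^1_u/W^2_u$ and your identification of $\mathbb G_u \cap \mathbb T_u$ with $\lbrace e \ \vert \ \dim(P_e \cap P_u) \geq 1 \rbrace$ are precisely the details the paper omits, carried out correctly.
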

Let $(x,y)$ be as above then $x = [u_1]$. Moreover  $y$ defines in $W$ the codimension $1$ vector space $W_y = [U,  e_2,  e_3]$. Then the fibre of $\tau \vert \mathbb G_u$ at $(x,y)$ is the family
of planes 
\begin{equation}
\lbrace P \ \vert \ x \in P \subset \mathbb P (W_y) \rbrace.
\end{equation}
With some more effort, one can show that such a fibre is naturally embedded as the Pl\"ucker quadric of the Grassmannian of lines of $\mathbb P(W_y / [u_1])$.
\medskip \par
Now we start dealing with maximal isotropic spaces $A$ of $w$.  
\begin{prop} Let $A$ be such a space then: $ u \in \mathbb P(A) \Leftrightarrow A \subset W^1_u. $  \end{prop}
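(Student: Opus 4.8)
The plan is to reduce the statement to a single piece of symplectic linear algebra, namely the identity that the hyperplane $W^1_u$ is exactly the $w$-orthogonal of the line $W^3_u = \wedge^3 U$, and then to invoke the standard behaviour of maximal isotropic subspaces under the pairing $w$. Throughout, for a subspace $B \subseteq \wedge^3 W$ I write $B^\perp := \lbrace \xi \in \wedge^3 W \mid w(\xi, b) = 0 \ \forall b \in B \rbrace$. Since $w$ is a nondegenerate alternating form on the $20$-dimensional space $\wedge^3 W$ (it is alternating because $3$ is odd, and nondegenerate as a wedge-duality pairing), one has $\dim B + \dim B^\perp = 20$ and $(B^\perp)^\perp = B$, and moreover $A$ being maximal isotropic means it is Lagrangian, so $\dim A = 10$ and $A^\perp = A$. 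Fix a generator $\omega$ of the line $\wedge^3 U = W^3_u$, so that $u = [\omega]$ and the condition $u \in \mathbb P(A)$ reads $\omega \in A$.

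First I would establish that $W^1_u = \langle \omega \rangle^\perp$. Recall $w(\omega, \xi) = \omega \wedge \xi \in \wedge^6 W$. If $\xi \in W^1_u = U \wedge \wedge^2 W$, then $\xi$ is a sum of terms $u' \wedge \eta$ with $u' \in U$ and $\eta \in \wedge^2 W$, and $\omega \wedge u' \in \wedge^4 U = 0$ because $\dim U = 3$; hence $\omega \wedge \xi = 0$, which gives $W^1_u \subseteq \langle \omega \rangle^\perp$. To upgrade this inclusion to an equality I compare dimensions: $\langle \omega \rangle^\perp$ has dimension $19$ as the $w$-orthogonal of a line, while the projection $W \to W/U$ induces a surjection $\wedge^3 W \to \wedge^3(W/U)$ with kernel precisely $U \wedge \wedge^2 W = W^1_u$, and $\wedge^3(W/U)$ is one-dimensional, so $\dim W^1_u = 19$ as well. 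An inclusion of equidimensional spaces is an equality, so $W^1_u = \langle \omega \rangle^\perp$.

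With this identity both implications are immediate. For the forward direction, assume $\omega \in A$; since $A$ is isotropic we have $w(\omega, a) = 0$ for every $a \in A$, i.e. $A \subseteq \langle \omega \rangle^\perp = W^1_u$. For the converse, assume $A \subseteq W^1_u = \langle \omega \rangle^\perp$ and apply $(-)^\perp$, which reverses inclusions: $\langle \omega \rangle = (\langle \omega \rangle^\perp)^\perp \subseteq A^\perp = A$, the last equality because $A$ is Lagrangian. Hence $\omega \in A$, that is $u \in \mathbb P(A)$.

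The only real content is the orthogonality identity $W^1_u = \langle \omega \rangle^\perp$; the two equivalences are then just the formal fact that a Lagrangian subspace contained in a hyperplane $v^\perp$ necessarily contains the line $\langle v \rangle$. I expect the step needing the most care to be the clean dimension count making the containment $W^1_u \subseteq \langle \omega \rangle^\perp$ into an equality, i.e. identifying $\wedge^3 W / W^1_u$ with the one-dimensional space $\wedge^3(W/U)$ rather than merely bounding dimensions loosely.
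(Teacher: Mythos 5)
Your proof is correct and takes essentially the same route as the paper: both directions reduce to the orthogonality relation between $W^1_u$ and $\wedge^3 U$ under $w$ together with maximality of $A$, the paper phrasing the direction $A \subset W^1_u \Rightarrow u \in \mathbb P(A)$ as ``$A + \wedge^3 U$ is isotropic, hence equals $A$,'' which is formally the same symplectic fact as your double-orthogonal argument. Your explicit dimension count establishing $W^1_u = (\wedge^3 U)^{\perp}$ merely spells out what the paper leaves implicit in calling the other direction obvious.
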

\begin{proof} Assume $A \subset W^1_u$. Since $W^1_u$ is orthogonal to $\wedge^3U$, the space $A + \wedge^3U$ is isotropic. Since $A$ is maximal isotropic then $\wedge^3 U \subset A$ 
and $u \in \mathbb P(A)$. The converse is obvious. \end{proof}
Next \it we fix the following assumptions \rm on the maximal isotropic space $A$.
\begin{itemize} \it \label{conv}
\item[$\circ$]  $ A \subset W^1_u$, \medskip \par
\item[$\circ$]  $ A \cap W^2_u = \wedge^3 U$.
\end{itemize}
Equivalently $u \in \mathbb P(A)$ and the intersection scheme $\mathbb P(A) \cdot \mathbb G$ is smooth and $0$-dimensional at $u$. That is a cheap restriction with respect to our goals. We will be mainly interested in
the following loci in $LG(10,20)$, to be repeatedly considered.
\begin{defi}
\begin{itemize} \ \par
\item[$\circ$] $\mathcal A := \lbrace A \in LG(10,20) \ \vert \ \text {\it the scheme} \ \mathbb P(A) \cdot \mathbb G \ \text {\it is finite} \rbrace$. \medskip \par
\item[$\circ$] $\mathcal A^c := \lbrace A \in \mathcal A \ \vert \ \text{\it $\mathbb P(A) \cdot \mathbb G$ is a Morin configuration} \rbrace$.
\end{itemize}
\end{defi}
Under our assumptions $\mathbb P(A)$ contains $u$, now we consider the restriction
\begin{equation}
\label{tau_A} \tau_A: \mathbb P(A) \to \mathbb P^8
\end{equation}
of $\tau$ to $\mathbb P(A)$. Since we have $\mathbb P(A) \cap \mathbb T_o = \lbrace u \rbrace$, it is clear that $\tau_A$ is just the projection of $\mathbb P(A)$ from its point $u$ and that the image of $\tau_A$ is
 $\mathbb P^8$. Since $A$ is isotropic, a quadratic section of $P_u \times P^{\perp}_u$ is intrinsically associated to $A$ as follows. \par Let $y  \in P^{\perp}_u$ then $y = [(e_2 \mod U) \wedge (e_3 \mod U)]$ for some vectors $e_2, e_3 \in W-U$. It is easy to describe the $3$-space $\tau_A^{-1}(P_u \times \lbrace y \rbrace)$. Indeed, let $a \in A$ then, as for any vector of $W^1_u$, we can write $a = a_1 + a_2 + a_3$, where $a_i$ is in the image of the previously considered  pairing $\wedge^i U \times \wedge^{3-i} W \to \wedge^3 W$, $i = 1,2,3$. It is therefore clear that
 \begin{equation}
 [a] \in \tau_A^{-1}(P_u \times \lbrace y \rbrace) \ \Longleftrightarrow a_1 \in U \wedge (\wedge^2 [e_2, e_3]).
 \end{equation}
 Let $[a], [a'] \in \tau_A^{-1}(P_u \times \lbrace y \rbrace)$ so that $a = a_1 + a_2 + a_3$ and $a' = a'_1+a'_2+a'_3$, then 
 \begin{equation}  a \wedge a' = a_1 \wedge a'_1 + a_1 \wedge a'_2 + a_2 \wedge a'_1 = 0. \end{equation}  
 Let $a_1 = v_1 \wedge v_2 \wedge v_3$ and $a'_1 = v'_1 \wedge v'_2 \wedge v'_3$, we can assume $v_1, v'_1 \in U$. Moreover we have $[U, v_2, v_3] = [U, v'_2, v'_3] = [U, e_2, e_3]$. Since this vector space has dimension $5$, its vectors $v_1, v_2, v_3, v'_1, v'_2, v_3$ are linearly dependent. This implies $a_1 \wedge a'_1 = 0$ so that
\begin{equation} \label{eq} a_1 \wedge a'_2 = a'_1 \wedge a_2. \end{equation}  Let $A_y \subset A$ be the subspace such that $\mathbb P(A_y) = \tau_A^{-1}(P_u \times \lbrace y \rbrace)$ and let $a, a' \in A_y$. Then
 the above equality \ref{eq} defines a symmetric bilinear form 
\begin{equation}
< , >_y: A_y \times A_y \to \mathbb C
\end{equation}
such that $<a,a'>_y := a_1 \wedge a'_2$. Let $A_x \subset A$ be such that $\mathbb P(A_x) = \tau_A^{-1}(\lbrace x \rbrace  \times P^{\perp}_u)$. In the same way, putting $<a,a'>_x := a_1 \wedge a'_2$,  we obtain  a symmetric bilinear map
\begin{equation}
< , >_x: A_x \times A_x \to \mathbb C.
\end{equation}
We omit some details. Since $<a,a'>_x = <a,a'>_y$, the construction defines a vector $v_A \in H^0(\mathcal O_{P_u \times P^{\perp}_u}(2,2))$ whose restrictions to
$P_u \times \lbrace y \rbrace$ and $\lbrace x \rbrace \times P^{\perp}_u$ respectively are the quadratic forms $< , >_y$ and $< , >_x$. \par Following some use we say that a bidegree $(2,2)$ hypersurface in $\mathbb P^2 \times \mathbb P^2$ is a Verra threefold, for short a \it $V$-threefold. \rm As we will see, $v_A$ is not zero so that $\di (v_A)$ is a $V$-threefold of $P_u \times P^{\perp}_u$.  Let us introduce the following definitions.
\begin{defi} \label{v_A} \it $V_A := \di(v_A)$ is the $V$-threefold associated to $A$. \end{defi}
\begin{defi} \label{F_A} \it $F_A := \mathbb P(A) \cdot \mathbb G$ is the scheme of incident planes of $A$. \end{defi}
\em From now on we will assume, up to different advice, that $F_A$ is finite. \rm \medskip \par
Now we describe $F_A$ in terms of the singular locus of $V_A$. Let $F'_A := F_A - \lbrace u \rbrace$ and let $e \in \mathbb P(A) - \lbrace u \rbrace$. We consider representations of $e$ 
as $e = [a_1 + a_2 + a_3]$, with $a_1, a_2, a_3$ as above. It is clear that the following condition are equivalent: \medskip \par
\begin{enumerate} \it
\item the line joining $u$ to $e$  intersects $F'_A$,
\item a representation  of $e$ satisfies $a_2 = 0$.
\end{enumerate} \medskip \par
Indeed (1) is equivalent to $[\wedge^3 U, a_1+a_2+a_3] = [\wedge^3 U, b_1]$, for some decomposable $b_1 \in W^1_u - W^2_u$, that is $e = [b_1 + b_3]$ for some $b_3 \in \wedge^3 U$. Notice also that:
\begin{lemm} The map $\tau_{ \vert F'_A}: F'_A \to P_u \times P^{\perp}_u \subset \mathbb P^8$ is biregular to its image. \end{lemm}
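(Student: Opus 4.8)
The plan is to reduce the assertion to the statement that the linear projection $\tau_A$ from $u$ restricts to a closed immersion (isomorphism onto its image) on the finite scheme $F'_A$, and then to derive that from two inputs only: that $\mathbb G$ is cut out by the Pl\"ucker quadrics, and that $F_A$ is finite. First I would check that the image really lands in the Segre. Since $A\subset W^1_u$ we have $\mathbb P(A)\subset\mathbb H_u$, hence $F_A=\mathbb P(A)\cdot\mathbb G\subseteq\mathbb H_u\cdot\mathbb G=\mathbb G_u$. Because $\mathbb P(A)\cap\mathbb T_u=\lbrace u\rbrace$ and $u\notin F'_A$, the locus $F'_A$ is disjoint from the center $\mathbb T_u$, so $\gamma\times\gamma^{\perp}$ is regular along it; by the factorization $\tau_{\vert\mathbb G_u}=s\circ(\gamma\times\gamma^{\perp})$ the image $\tau(F'_A)$ lies in $s(P_u\times P^{\perp}_u)$. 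As $s$ is a closed embedding, it suffices to show that $\tau_A=\tau_{\vert\mathbb P(A)}$ restricts to a closed immersion on $F'_A$.

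Recall that $\tau_A$ is simply the projection of $\mathbb P(A)$ from the point $u$, and that $u\notin F'_A$. I would then invoke the standard criterion: a projection from a point $u\notin Z$ embeds a subscheme $Z$ if and only if $u$ lies on no secant and no tangent line of $Z$, equivalently $u\notin\langle W\rangle$ for every length-$2$ subscheme $W\subseteq Z$. So assume for contradiction that $u\in\langle W\rangle=:L$ for some length-$2$ subscheme $W\subseteq F'_A$. Since $W\subseteq F'_A\subseteq\mathbb G$ and $u\in\mathbb G$, while $u\notin W$ (as $u\notin F'_A$), the line $L$ meets $\mathbb G$ in a subscheme of length $\geq 3$: length $2$ from $W$, together with at least $1$ more from the point $u$, whose support is disjoint from that of $W$. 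This covers uniformly the two cases $W=\lbrace e,e'\rbrace$ (three distinct points $u,e,e'$) and $W$ a tangent direction at $e$ (multiplicity $\geq 2$ at $e$ and $\geq 1$ at $u$).

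Now I use that $\mathbb G\subset\mathbb P(\wedge^3W)$ is the intersection of its Pl\"ucker quadrics. Each such quadric restricts to $L\cong\mathbb P^1$ as a section of $\mathcal O(2)$, hence either vanishes identically or vanishes to total order $\leq 2$; since it vanishes to order $\geq 3$ along $L$, it vanishes identically, so $L\subseteq\mathbb G$. But $W\subseteq\mathbb P(A)$ and $\mathbb P(A)$ is linear, so $L=\langle W\rangle\subseteq\mathbb P(A)$, whence $L\subseteq\mathbb G\cap\mathbb P(A)=F_A$. This contradicts the finiteness of $F_A$. Therefore no such $W$ exists, $\tau_A$ restricts to a closed immersion on $F'_A$, and combined with the first paragraph the map $\tau_{\vert F'_A}\colon F'_A\to P_u\times P^{\perp}_u$ is biregular onto its image.

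The main obstacle is the scheme-theoretic strength hidden in \emph{biregular to its image}: injectivity on points alone is not enough, one must also rule out tangent lines through $u$ so that $\tau_A$ is an immersion. Both are handled at once by the length-$2$ subscheme formulation, and the essential geometric input is that $\mathbb G$ is defined by quadrics, which promotes a length-$\geq 3$ intersection with a line into containment of the entire line; finiteness of $F_A$ then closes the argument. The one point I would be careful to state precisely is the embedding criterion for linear projections in the possibly non-reduced setting, since $F'_A$ need not be reduced.
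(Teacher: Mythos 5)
Your proof is correct and follows essentially the same route as the paper: reduce to the fact that $\tau$ restricted to $\mathbb P(A)$ is the projection from $u$, test biregularity on length-$2$ subschemes $\zeta \subset F'_A$, note that failure forces $\langle \zeta \rangle \cdot \mathbb G$ to have length $\geq 3$, and conclude $\langle \zeta \rangle \subseteq \mathbb G \cap \mathbb P(A) = F_A$ (since $\mathbb G$ is an intersection of quadrics), contradicting finiteness of $F_A$. Your write-up merely makes explicit two points the paper leaves implicit, namely the scheme-theoretic embedding criterion for projections and the verification that the image lands in the Segre variety.
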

\begin{proof} We have $\tau(F'_A) \subset \tau(\mathbb G_u) \subset P_u \times P^{\perp}_u$.  To prove that $\tau_{\vert F'_A}$ is biregular to $\tau(F'_A)$ consider any scheme $\zeta \subset F'_A$
of length $2$. We have $\zeta \subset \mathbb P(A)$. Moreover the restriction of $\tau$ to $\mathbb P(A)$ is the projection from $u$. Hence $\tau_{\vert \zeta}$ is not biregular to its image iff the line
$\langle \zeta \rangle$ contains $u$. Since $u \notin \zeta$, this is equivalent to say that the scheme $\langle \zeta \rangle \cdot \mathbb G$ has length $\geq 3$. Then
it follows $\langle \zeta \rangle \subset \mathbb G$, because $\mathbb G$ is intersection of quadrics, and hence $F_A$ is not finite: against our assumption. This implies the statement. \end{proof}
Now let us consider the cone $C(F_A) := \tau^*_{\vert \mathbb P(A)}\tau(F_A)$. Then $C(F_A)$ is a cone of vertex $o$ over $F'_A$ and it is defined by the equation $a_2 = 0$, that is
\begin{equation}
C(F_A) = \lbrace [a_1 + a_2 + a_3] \in \mathbb P(A) \ \vert \ a_2 = 0 \rbrace.
\end{equation}
Of course the condition defines as well the embedding $\tau(F'_A) \subset P_u \times P^{\perp}_u$. Now we study $\tau(F'_A)$. To this purpose let $e = [a_1 + a_2 + a_3] \in \mathbb P(A)$ as usual. If $e \in C(F_A)$ then we have $\tau(e) = (x,y) \in P_u \times P^{\perp}_u$. At first we remark that the condition $a_2 = 0$ is precisely equivalent to the property that the polar forms
\begin{equation}
<\cdot, a_2 >_y: A_x \to \mathbb C \ \text{and} \ <\cdot , a_2 >_x: A_y \to \mathbb C
\end{equation}
of the vector $a_1+a_2+a_3$ are identically zero. This immediately translates in the following simple condition on a point $o := (x,y) \in P_u \times P^{\perp}_u$: \begin{equation} \label{char}  \text{ \it  both the planes
$P_u \times \lbrace y \rbrace$ and $\lbrace x  \rbrace\times P^{\perp}_u$ are tangent to $V_A$ at $o$. } \end{equation}  
Since these planes generate the embedded tangent space in the Segre embedding of $P_u \times P^{\perp}_u$, it follows that $o \in \tau(F'_A)$ iff $o$ is singular for $V_A$. In order to have more
precision let us write explicitly the equations of $\tau(F'_A)$. Under our notation we have
$$
P_u \times P^{\perp}_u = \mathbb P^2 \times \mathbb P^2 \subset \mathbb P^8.
$$
On $\mathbb P^2 \times \mathbb P^2$ we fix projective coordinates $(x_1:x_2:x_3) \times (y_1:y_2:y_3)$  defining the point $(x,y)$ and then we consider the equation  $f$ of $V$. Therefore we have  
\begin{equation}
f = \sum a_{ij}x_ix_j = 0,
\end{equation}
where the $a_{ij}$'s are quadratic forms in $y$.  By the condition \ref{char} the partials
$$
f_{x,i} := \frac {\partial f}{\partial x_i } \ , \  f_{y,i}Ê:= \frac {\partial f}{\partial y_i } \ , \ i = 1,2,3,
$$
define $\tau(F'_A)$, so the next theorem follows.
\begin{thm}\label{main}  $F'_A$ is biregular to
the scheme defined by the above derivatives i.e.~to the singular locus of $V_A$. \end{thm}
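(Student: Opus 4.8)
The plan is to prove the isomorphism in two stages: first reduce the entire statement, by means of the projection $\tau$, to a comparison of two subschemes of $P_u \times P^{\perp}_u$, and then identify these two subschemes. The first stage is supplied by the lemma preceding the theorem, which says that $\tau_{|F'_A}\colon F'_A \to P_u \times P^{\perp}_u$ is biregular onto its image $\tau(F'_A)$. Hence it suffices to establish the scheme-theoretic equality $\tau(F'_A) = \mathrm{Sing}\,V_A$, where $\mathrm{Sing}\,V_A$ carries its natural structure as the zero scheme of the partials $f_{x,i}, f_{y,i}$.

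For the second stage I would describe $\tau(F'_A)$ through the cone $C(F_A) = \{[a_1+a_2+a_3] \in \mathbb{P}(A) \mid a_2 = 0\}$. Since $\tau_{|\mathbb{P}(A)}$ is the projection from $u$ and $C(F_A)$ is its cone over $F'_A$ with vertex $u$, the image $\tau(F'_A)$ is cut out precisely by the vanishing of the middle component $a_2$. The heart of the matter is then the equivalence, recorded just above the theorem, between $a_2 = 0$ and the simultaneous vanishing at $o=(x,y)=\tau(e)$ of the two polar forms $\langle\cdot,a_2\rangle_y$ and $\langle\cdot,a_2\rangle_x$. Geometrically this is condition \ref{char}: both coordinate planes $P_u\times\{y\}$ and $\{x\}\times P^{\perp}_u$ are tangent to $V_A$ at $o$.

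I would close the identification using the Segre geometry. Because the two coordinate planes through $o$ span the embedded tangent space of $P_u\times P^{\perp}_u$ at $o$, the hypersurface $V_A$ is tangent to both of them exactly when its tangent space at $o$ fills the whole ambient tangent space, that is, exactly when $o$ is singular on $V_A$. In coordinates $f = \sum a_{ij}(y)x_ix_j$, tangency of $P_u\times\{y\}$ is the vanishing of all $f_{x,i}(o)$ and tangency of $\{x\}\times P^{\perp}_u$ the vanishing of all $f_{y,i}(o)$; together these are exactly the generators of the Jacobian ideal. So the two polar conditions match the defining equations of $\mathrm{Sing}\,V_A$, and combined with the lemma this yields $F'_A \cong \mathrm{Sing}\,V_A$.

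The step I expect to be most delicate is the passage from the set-theoretic equality $\tau(F'_A) = \mathrm{Sing}\,V_A$ to the scheme-theoretic one, which is what \emph{biregular} demands. One must verify that the linear projection $a \mapsto a_2$ onto the middle graded piece of $W^1_u$ corresponds, under the construction of $v_A$, precisely to the polarization of $f$, so that the ideal it cuts on $\mathbb{P}(A)$ pushes forward to the Jacobian ideal $(f_{x,i}, f_{y,i})$ and not to some thickening of it. This forces one to unwind the definition of $v_A$ as the bihomogeneous form whose restrictions to the factors are $\langle\cdot,\cdot\rangle_y$ and $\langle\cdot,\cdot\rangle_x$, and in particular to use that $v_A \not\equiv 0$, so that $V_A$ is a genuine hypersurface with a well-behaved singular scheme.
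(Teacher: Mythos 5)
Your proposal is correct and follows the paper's own route step for step: the preceding lemma reduces the theorem to identifying $\tau(F'_A)$ with $\Sing V_A$, the cone $C(F_A)=\lbrace a_2=0\rbrace$ together with the vanishing of the polar forms $\langle\cdot,a_2\rangle_y$ and $\langle\cdot,a_2\rangle_x$ gives the tangency condition \ref{char}, the Segre tangent space argument converts this into singularity of $V_A$, and the explicit coordinate computation with $f=\sum a_{ij}(y)x_ix_j$ matches the Jacobian ideal $(f_{x,i},f_{y,i})$ exactly as in the paper. Your closing remark on the scheme-theoretic (rather than merely set-theoretic) equality is a fair point of care, and it is resolved the same way the paper resolves it, by exhibiting the partials as the defining equations of $\tau(F'_A)$.
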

We are grateful to M. Kapustka for discussions around this result. We remark that $\tau(F'_A)$ fits in the standard exact sequence
\begin{equation}
0 \to T_{V_A} \to T_{\mathbb P^2 \times \mathbb P^2 \vert V_A} \to \mathcal O_{V_A}(2,2) \to \mathcal O_{F'_A} \to 0
\end{equation}
of tangent and normal sheaves realizing the singular locus of $V_A$.  This complete the proof of
theorem \ref{biregular f_u}.
 \section{$V$-threefolds with isolated singularities}
Continuing  in the same vein we consider now a $V$-threefold $V \subset \mathbb P^2 \times \mathbb P^2$ such that $\Sing V$ is finite. We want to discuss more on $\Sing V$. Let us consider the projections
\begin{equation} \label{projections}
\begin{CD}
{\mathbb P^2} @<{\pi_x}<< { V} @>{\pi_y}>> {\mathbb P^2} \\
\end{CD}
\end{equation} 
and the schemes $R_x\subset V$ and $R_y\subset V$ respectively defined by the ideals
\begin{equation} \label{ramif}  \mathcal J_{V,x} := (f_{x,1}, f_{x,2}, f_{x,3}) \ , \ \mathcal J_{V,y} := (f_{y,1}, f_{y,2}, f_{y,3}). \end{equation}
It is clear that $R_x$ and $R_y$ are the ramification schemes respectively of $\pi_y$ and $\pi_x$. Their supports are the loci where the  tangent maps $d\pi_y$ and $d\pi_x$ have rank $\leq 1$. Now,
in the Chow ring $\CH^*(\mathbb P^2 \times \mathbb P^2)$, let $h_x$ and $h_y$ be respectively the classes of the pull-back of a line by $\pi_x$ and $\pi_y$. 
 Then it is very easy to see that $f_{x,i}$ and $f_{y,i}$ define divisors 
 $$
D_{x,i} := \di (f_{x,i}) \in \vert h_x + 2h_y \vert  \ , \ D_{y,i} := \di (f_{y,i}) \in \vert 2h_x + h_y \vert.
$$
The next properties we show for $R_x$ are true for $R_y$ with the same arguments.
\begin{lemm} \label{lemma3.3} Let $o \in \Sing V$. If  the plane $\mathbb P^2 \times \pi_y(o)$ is not in $V$ then $o \in \Sing R_x$.
\end{lemm}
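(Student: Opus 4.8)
The plan is to work in the bihomogeneous coordinates already fixed and write $f=\sum a_{ij}(y)\,x_ix_j$ with $M(y)=(a_{ij}(y))$ a symmetric $3\times 3$ matrix of quadrics in $y$, so that $f_{x,i}=2\,(M(y)x)_i$ and, by definition, $R_x=\{(x,y)\ \vert\ M(y)x=0\}$. Two translations are then immediate. Writing $o=(x_0,y_0)$, the point $o\in\Sing V$ forces all six partials to vanish at $o$, hence $M(y_0)x_0=0$ and $f_{y,k}(o)=0$ for $k=1,2,3$; in particular $x_0\in\ker M(y_0)$, so $\Sing V\subset R_x$. On the other hand $\mathbb P^2\times\pi_y(o)=\mathbb P^2\times\{y_0\}\subset V$ if and only if the quadratic form $x\mapsto f(x,y_0)$ vanishes identically, i.e.\ if and only if $M(y_0)=0$; thus the hypothesis is precisely $M(y_0)\neq 0$.

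First I would normalise the $x$-coordinates so that $x_0=(0:0:1)$. Since $M(y_0)$ is symmetric and kills $x_0$, its third row and third column vanish, i.e.\ $a_{i3}(y_0)=a_{3i}(y_0)=0$ for all $i$. The heart of the argument is the single computation that the differential of the third generator $g_3:=f_{x,3}$ of $\mathcal J_{V,x}$ vanishes at $o$: indeed $\partial g_3/\partial x_j(o)=2a_{3j}(y_0)=0$, while $\partial g_3/\partial y_k(o)=2\sum_j(\partial a_{3j}/\partial y_k)(y_0)\,x_{0,j}=2(\partial a_{33}/\partial y_k)(y_0)=2f_{y,k}(o)=0$ because $o\in\Sing V$. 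Equivalently, in these coordinates $a_{33}$ vanishes to order two at $y_0$ while $a_{31},a_{32}$ vanish at $y_0$, so $f_{x,3}$ vanishes to order $\geq 2$ at $o$. Since one of the three generators of $\mathcal J_{V,x}$ has zero differential at $o$, the Zariski tangent space satisfies $\dim T_o R_x\geq 4-2=2$.

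To convert this into $o\in\Sing R_x$ I would exhibit $R_x$ as a curve near $o$ and compare dimensions. Using the Euler relation $\sum_i x_i(M(y)x)_i=f$ in the chart $x_3=1$ one gets $g_3=2f-x_1g_1-x_2g_2$, so locally $R_x=\{g_1=g_2=0\}\cap V=Z\cap V$ with $Z:=\{f_{x,1}=f_{x,2}=0\}$. Here the hypothesis enters decisively: when $\Rank M(y_0)=2$ the top-left $2\times 2$ block of $M(y_0)$ is invertible, so $dg_1(o),dg_2(o)$ are independent, $Z$ is a smooth surface at $o$, and $M(y_0)\neq 0$ rules out $Z\subseteq V$; then $R_x$ is the divisor $\{f\vert_Z=0\}$ in the smooth surface $Z$, and since $\nabla f(o)=0$ the restriction $f\vert_Z$ has a critical point at $o$, so $R_x$ is a curve singular at $o$. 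The condition $M(y_0)\neq 0$ is exactly what excludes the degenerate alternative $\mathbb P^2\times\{y_0\}\subset R_x$, in which $o$ could be a smooth point of a two-dimensional component.

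The main obstacle I anticipate is the bookkeeping needed to guarantee that $R_x$ is genuinely one-dimensional at $o$, i.e.\ that it carries no two-dimensional component through $o$. The hypothesis $M(y_0)\neq 0$ removes the vertical plane $\mathbb P^2\times\{y_0\}$, and the standing assumption that $\Sing V$ is finite should exclude the remaining degenerations (a curve of rank $\leq 1$ fibres, or $\det M\equiv 0$). The case $\Rank M(y_0)=1$, where $Z$ may itself be singular at $o$, is the delicate one: there I would rely on the uniform estimate $\dim T_o R_x\geq 2$ of the second paragraph rather than the cleaner restriction argument available in the rank $2$ case. Granting one-dimensionality, $\dim T_o R_x\geq 2>1$ yields $o\in\Sing R_x$, and the same computation with $x$ and $y$ interchanged gives the parallel statement for $R_y$.
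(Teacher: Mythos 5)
Your tangent--space computation is correct and is, in substance, the paper's own argument written invariantly: the identity $dg_3(o)=0$, which you get from $a_{3j}(y_0)=0$ and $(\partial a_{33}/\partial y_k)(y_0)=f_{y,k}(o)=0$, is exactly what the paper's explicit normal form shows --- after normalizing $o=(0:0:1)\times(0,0)$, the dehomogenized partial $\partial f/\partial x_3$ has no linear term, so $T_{R_x,o}$ is cut by at most two independent linear forms and $\dim T_{R_x,o}\geq 2$. The point where you stop (``granting one-dimensionality'') is precisely where the paper inserts a forward reference to Theorem \ref{surf-component}: every irreducible surface contained in $R_x$ is a plane $\mathbb P^2\times y$, so the hypothesis $\mathbb P^2\times\pi_y(o)\not\subset V$ (together with $R_x\subset V$, e.g.\ by your Euler identity) forces $\dim_o R_x\leq 1$, and then $\dim T_{R_x,o}\geq 2$ gives $o\in\Sing R_x$ uniformly, with no case division on $\Rank M(y_0)$. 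Your diagnosis of what is needed is accurate: Theorem \ref{surf-component} rests on the lemma that the discriminant of $\pi_y\vert V$ is reduced when $\Sing V$ is finite, and the degenerations that lemma excludes (a curve of rank $\leq 1$ fibres; and with it, surfaces in $R_x$ dominating the $y$-plane, as would happen if $\det M\equiv 0$) are exactly the ones you list as ``remaining degenerations.'' Two cautions on your shortcuts. First, in the rank-$2$ case the claim that $M(y_0)\neq 0$ by itself rules out $Z\subseteq V$ is too quick: a surface $Z\subseteq R_x$ with finite fibre over $y_0$ would have to dominate $\mathbb P^2$, and this is absurd only once one knows $\pi_y(R_x)$ lies in a proper curve, i.e.\ $\det M\not\equiv 0$ --- which again comes from finiteness of $\Sing V$, not from $M(y_0)\neq 0$. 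Second, the delicate rank-$1$ case you defer is genuinely not handled by your local analysis (there $dg_1(o),dg_2(o)$ may be dependent and $Z$ singular), and the paper simply never needs it: once the surface-exclusion theorem is available, the dimension count closes the proof in all ranks at once. So as a self-contained proof yours is incomplete at exactly one step, but that step is the separately stated Theorem \ref{surf-component}, and your proposed route to it coincides with the paper's.
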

\begin{proof} Cf. \cite{FV} 1.2. Let $(x_1: x_2: x_3) \times (t_1, t_2)$ be coordinates on $\mathbb P^2 \times \lbrace y_3 \neq 0 \rbrace $ so that $t_i := \frac{y_i}{y_3}$ for $i = 1,2$ and  $o$ is 
$(0:0:1) \times (0,0)$.  On it the equation of $V$ is
$$
qx_3^2 + (at_1x_1 + bt_2x_1+ ct_1x_2 + dt_2x_2)x_3 + p = 0,
$$
where $q \in \mathbf C[t_1,t_2]$, $p \in \mathbf C[x_1, x_2]$ are quadratic forms. The partials $\frac{\partial}{\partial x_i}$, $i = 1,2,3$, give local equations of $R_x$. In affine coordinates $u_1 := \frac{x_1}{x_3}$, $u_2 := \frac{x_2}{x_3}$, these equations are
$$
2q+at_1u_1+bt_2u_1+ct_1u_2+dt_2u_2=
$$
$$
at_1+bt_2+2p_{11}u_1+2p_{12}u_2= ct_1+dt_2+2p_{22}u_2+2p_{12}u_2 = 0,
$$
where $p = \sum p_{ij}u_iu_j$. Clearly the tangent space $T_{R_x,o}$ is defined by the latter two equations so that $\dim T_{R_x,o} \geq 2$. By theorem \ref{surf-component}  the only irreducible
surfaces in $R_x$ are planes $\mathbb P^2 \times y$. Since the only one through $o$ is not, it follows that $o \in \Sing R_x$. 
 \end{proof}
 \begin{thm} \label{no plane} Assume the intersection scheme $R_x= D_{x,1} \cdot D_{x, 2} \cdot D_{x,3}$ is proper, then the length of the singular locus $\Sing V$ is $\leq 15$.
 \end{thm}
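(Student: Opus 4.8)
The plan is to bound $\#\Sing V$ by the number of singular points of the ramification curve $R_x$, which under the properness hypothesis is a complete intersection curve, and then to estimate that number through the arithmetic genus of $R_x$ together with the geometry of the discriminant sextic of the conic bundle $\pi_y$. First I would record that, since $R_x=D_{x,1}\cdot D_{x,2}\cdot D_{x,3}$ is proper, it is a one–dimensional complete intersection, hence connected and Cohen--Macaulay, and — being generically reduced, because over a general point of its image the fibre conic of $\pi_y$ has a single node — actually reduced. By Lemma \ref{lemma3.3} every $o\in\Sing V$ lies in $\Sing R_x$ as soon as the plane $\mathbb P^2\times\pi_y(o)$ is not contained in $V$; but a plane $\mathbb P^2\times y\subset V$ would force $R_x$ to contain that plane as a two–dimensional component, contradicting properness (and Theorem \ref{surf-component}). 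Hence $\Sing V\subseteq\Sing R_x$, and it suffices to bound $\#\Sing R_x$.

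Next I would compute $p_a(R_x)$ by adjunction on $X=\mathbb P^2\times\mathbb P^2$. Since $K_X=-3h_x-3h_y$ and each $D_{x,i}\in|h_x+2h_y|$, one gets $\omega_{R_x}\cong\mathcal O_{R_x}(K_X+D_{x,1}+D_{x,2}+D_{x,3})=\mathcal O_{R_x}(3h_y)$. Using $[R_x]=(h_x+2h_y)^3=6\,h_x^2h_y+12\,h_xh_y^2$ in $\CH^*(X)$, and $h_x^3=h_y^3=0$, $h_x^2h_y^2=[\mathrm{pt}]$, I find $\deg\omega_{R_x}=3h_y\cdot[R_x]=18$, so that $p_a(R_x)=10$. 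For a reduced curve the number of singular points is at most the total delta invariant $\delta$, and for a connected reduced curve one has $p_a(R_x)=\sum_i g_i+\delta-r+1$, i.e. $\delta=9+r-\sum_i g_i$, where $r$ is the number of irreducible components and the $g_i$ their geometric genera.

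It then remains to bound $r$. Here I would use the restriction of the projection $\pi_y$ to $R_x$, whose image is the discriminant curve $\Delta=\{\det(a_{ij}(y))=0\}\subset\mathbb P^2$, a plane sextic. Since $R_x\to\Delta$ is generically one–to–one and $\deg\Delta=6$, the part of $R_x$ dominating $\Delta$ has at most six components; feeding $r\le 6$ and $g_i\ge 0$ into the genus bound would give $\#\Sing V\le\#\Sing R_x\le\delta\le 9+6=15$. Conceptually this is the classical fact that a reduced plane curve of degree $d$ carries at most $\binom d2$ singular points, applied to $\Delta$ with $d=6$ and $\binom 62=15$, the image $\pi_y(\Sing V)$ landing in $\Sing\Delta$.

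The hard part will be exactly the control of $r$. Properness forbids two–dimensional components of $R_x$, in particular the planes $\mathbb P^2\times y$, but it does not by itself forbid one–dimensional \emph{vertical} components, namely lines $\ell\times\{y_0\}$ arising over the isolated points of $\mathbb P^2$ where the fibre conic degenerates to a double line of rank $1$; over such a point the injectivity of $\pi_y$ on $\Sing V$ can also fail. I would therefore need to handle these rank–$1$ points carefully, showing by a local analysis (as in the model of Lemma \ref{lemma3.3}) that the nodes of $V$ lying over a rank–$1$ point, and the extra vertical components they create, never push the count $9+r-\sum_i g_i$ above $15$ — equivalently, that each rank–$1$ point of $\Delta$ absorbs, through the higher multiplicity of the sextic there, at least as much of the $\binom 62$ budget as the number of nodes of $V$ it supports. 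This bookkeeping at the rank–$1$ locus is the one genuinely delicate point; everything else is the adjunction computation and the reduction $\Sing V\subseteq\Sing R_x$ above.
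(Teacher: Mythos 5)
Your reduction $\Sing V\subseteq\Sing R_x$ (via Lemma \ref{lemma3.3}, after ruling out planes $\mathbb P^2\times y\subset V$ by properness) is exactly the paper's first step, and your computations of the class $6h_x^2h_y+12h_xh_y^2$ and of $p_a(R_x)=10$ agree with the paper. But your proof is incomplete, and the incompleteness is not a routine verification you can defer: everything downstream of the genus formula hinges on controlling $R_x$ at the rank-$1$ fibres, and you have only named that problem, not solved it. Concretely: (i) the formula $\delta=p_a-\sum g_i+r-1$ requires $R_x$ reduced, and your generic-reducedness argument only covers the components of $R_x$ dominating the discriminant sextic, not the vertical lines $\ell\times\{y_0\}$ sitting over rank-$1$ points, along which the scheme cut out by $(f_{x,1},f_{x,2},f_{x,3})$ could a priori be non-reduced; (ii) the bound $r\le 6$ counts only the horizontal components, while each rank-$1$ point contributes a vertical line to $R_x$, so $r$, and with it your bound $\delta\le 9+r$, is not controlled without bounding the number of rank-$1$ fibres and relating it to the genera $g_i$ — the very ``bookkeeping'' you postpone; and (iii) your fallback via $\pi_y(\Sing V)\subseteq\Sing\Delta$ and the classical bound $\binom 62=15$ needs $\pi_y$ injective on $\Sing V$, which is exactly what can fail over a rank-$1$ fibre, where a single line may support several nodes of $V$. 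So as written the argument establishes the theorem only under the additional (unproved) hypothesis that $V\to\mathbb P^2$ has no rank-$1$ degenerations, and the delicate case is left open.

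The paper's proof avoids this entire circle of difficulties by a short intersection-theoretic count that needs neither reducedness, nor connectedness, nor a component count of $R_x$: since $\Sing V$ is finite, no component of $R_x$ can be a fixed component of the net spanned by $D_{y,1},D_{y,2},D_{y,3}$ (such a component would lie in $\Sing V$), so a general member $D\in|2h_x+h_y|$ of that net meets the curve $R_x$ properly in a scheme of length $(h_x+2h_y)^3\cdot(2h_x+h_y)=30$; this scheme contains $\Sing V$ because every $f_{y,i}$ vanishes there, and by Lemma \ref{lemma3.3} each $o\in\Sing V$ is a singular point of $R_x$, so the local intersection multiplicity of $R_x\cdot D$ at $o$ is $\ge 2$, giving $|\Sing V|\le 30/2=15$ at once — rank-$1$ fibres, vertical components and possible non-reduced structure are all absorbed into the length-$30$ budget without ever being analyzed. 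If you want to salvage your $\delta$-invariant route, you would need to prove a local statement at each rank-$1$ point (bounding the nodes of $V$ on the vertical line by the excess multiplicity of $\Delta$ there, and checking reducedness of $R_x$ along that line); the paper's one-line replacement of your global genus bookkeeping by the moving divisor $D$ is the idea your proposal is missing.
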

 \begin{proof}  Since it is proper, $R_x$ is complete intersection of the divisors $D_{x,1}, D_{x,2}, D_{x,3}$  of class $h_x + 2h_y$. Hence one computes that $R_x$ has arithmetic genus $10$ and 
 class $6h_x^2h_y + 12h_xh_y^2$ in $\CH^*(\mathbb P^2 \times \mathbb P^2)$. Since $\Sing V$ is finite, no component of $R_x$ is a fixed component of the net of divisors generated by $D_{y,1}, D_{y,2}, D_{y,3}$. Hence  an 
 element $D$ of this net intersects $R_x$ properly and $\Sing V$ is embedded in the finite scheme  $R_x \cdot D$. By lemma \ref{lemma3.3} each point $o \in \Sing V$ is singular for  $R_x$. Then, 
 since $R_x \cdot D$ has length $30$ and its multiplicity is $\geq 2$ at each $o \in \Sing V$, the length of $\Sing  V$ is $\leq 15$. 
 \end{proof}
\begin{lemm} If $\Sing V$ is finite the discriminant of $\pi_y\vert V: V \to \mathbb P^2$ is a reduced curve. \end{lemm}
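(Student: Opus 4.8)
The plan is to argue by contradiction: assume the discriminant $\Gamma\subset\mathbb P^2$ of $\pi_y$ is non-reduced and deduce that either $\Sing V$ is positive-dimensional or $R_x$ contains an inadmissible surface, both impossible. First I would record the conic-bundle description: writing $f=x^{t}M(y)x$ with $M(y)=(a_{ij}(y))$ the symmetric $3\times 3$ matrix of quadrics, the fibre of $\pi_y$ over $y$ is the conic $\{x^{t}M(y)x=0\}$, so that $\Gamma=\di(\det M(y))$ is a plane sextic and $\det M\not\equiv 0$ (otherwise $\pi_y$ has everywhere-singular fibres and $\Sing V$ is infinite). A non-reduced $\Gamma$ then carries an irreducible component $\Gamma_0$ along which $\det M$ vanishes to order $\geq 2$, and over the generic point $y_0\in\Gamma_0$ the matrix $M(y_0)$ has corank $1$ or $2$; I would split into these two cases.

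In the \emph{corank $2$} case the fibre conic over a generic $y_0\in\Gamma_0$ is a double line $2L_{y_0}$, where $L_{y_0}=\mathbb P(\ker M(y_0))$. The lines $L_{y_0}$ sweep out an irreducible surface $\Sigma\subset V$, and since $f$ restricts to a square on each such fibre all three partials $f_{x,i}$ vanish on $\Sigma$; hence $\Sigma\subseteq R_x$. But $\pi_y(\Sigma)=\Gamma_0$ is a curve, so $\Sigma$ is not one of the planes $\mathbb P^2\times\{y\}$, contradicting theorem \ref{surf-component}.

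In the \emph{corank $1$} case the fibre over generic $y_0\in\Gamma_0$ has a single node at $x_0=[v]$, with $v$ spanning $\ker M(y_0)$. Here I would invoke Jacobi's formula: since $M(y_0)$ has rank $2$ its adjugate is $v\,v^{t}$ up to a scalar, so $\partial_{y_k}\det M(y_0)=\lambda\, v^{t}\bigl(\partial_{y_k}M(y_0)\bigr)v$. Vanishing of $\det M$ to order $\geq 2$ along $\Gamma_0$ forces the whole gradient $\nabla\det M$ to vanish on $\Gamma_0$, i.e. $v^{t}\bigl(\partial_{y_k}M(y_0)\bigr)v=0$ for all $k$. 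These are precisely the equations $f_{y,k}(x_0,y_0)=0$, while $M(y_0)x_0=0$ gives $f_{x,k}(x_0,y_0)=0$; hence $(x_0,y_0)\in\Sing V$. As $y_0$ runs over $\Gamma_0$ the points $([v],y_0)$ fill a curve in $\Sing V$, contradicting finiteness.

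The main obstacle is exactly this corank-$1$ case: a priori a multiple component of the discriminant could arise from the image surface $y\mapsto[M(y)]$ being merely tangent to the symmetric determinant cubic along a curve, with no drop of rank at all, and the heart of the proof is the adjugate identity above, which converts such a tangency into a one-parameter family of singular points of $V$. Once both cases are excluded, $\Gamma$ is reduced.
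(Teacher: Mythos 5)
Your corank-$1$ case is correct and is in fact a self-contained replacement for the step the paper outsources: the paper simply cites Beauville \cite{B3} to conclude that over a general point of a non-reduced component the fibre must have rank $1$, whereas your adjugate identity $\partial_{y_k}\det M=\lambda\,v^{t}(\partial_{y_k}M)v$ proves exactly this (a rank-$2$ generic fibre over a multiple component forces a curve of singular points through the fibre nodes). Two small points to tidy there: you should dispose of the corank-$3$ possibility ($M\equiv 0$ along $\Gamma_0$ forces $f$ to be divisible by a polynomial in $y$ alone, so $V$ is reducible and $\Sing V$ contains the surface where the components meet, against finiteness), and note that your opening claim that $\det M\equiv 0$ makes $\Sing V$ infinite is itself proved by the same adjugate computation, not by the mere presence of singular fibres, since a fibre node need not be a singular point of $V$.

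The genuine gap is the corank-$2$ case. You invoke theorem \ref{surf-component}, but in the paper that theorem comes \emph{after} the present lemma and its proof opens with ``By the lemma $\Gamma$ is reduced''; so as spliced into the paper your argument is circular. Nor can this be waved away: for an irreducible surface $S\subseteq R_x$ dominating a curve in $\mathbb P^2$, the content of theorem \ref{surf-component} is essentially \emph{equivalent} to the non-existence of a curve of double-line fibres, which is precisely what you are trying to exclude, so there is no independent statement to appeal to. The repair is the paper's own computation, which you should substitute for the citation: over an affine $U=\Spec R$ meeting $\Gamma_0$, the double-line hypothesis lets one write $f=d\,a^2-b\,c$ with $b\in R$ a reduced equation of $\Gamma_0$, $a$ and $c$ forms of degree $1$ and $2$ in $x$ with coefficients in $R$, and $d$ a unit after shrinking $U$ (using irreducibility of $V$, itself a consequence of the finiteness of $\Sing V$). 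Then all six partials of $f$ vanish on $Z=\lbrace a=b=c=0\rbrace$, and $Z$ is nonempty of dimension $\geq 1$ because over each point of $\Gamma_0\cap U$ the line $\lbrace a=0\rbrace$ meets the conic $\lbrace c=0\rbrace$; equivalently, on each double line $L_{y_0}=\lbrace a(\cdot,y_0)=0\rbrace$ the $y$-partials restrict to $-b_{y_k}(y_0)\,c$, so the points of $\lbrace a=c=0\rbrace$ sweep a curve in $\Sing V$, contradicting finiteness. With that substitution your proof is complete and, unlike the paper's, entirely self-contained.
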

\begin{proof} Let $f = \pi_y \vert V$. From the finiteness of $\Sing V$ and generic smoothness it follows that the discriminant of $f$ is a curve. Assume $B$ is a non reduced, irreducible component
of it. Let $y \in B$ be a general point then $\Sing V \cap f^*(y) = \emptyset$. Moreover it follows from \cite{B3} that $f^*(y)$ is a conic of rank $1$. Let $S \subset V$ be the closure of the union of the
lines $f^{-1}(y)$, where $y \in B$ is general. Then $f: S \to B_{red}$ is a $\mathbb P^1$-bundle and $f^*B_{red}$ has multiplicity $2$ along $S$. In particular there exists an affine open set $U = \Spec R \subset \mathbb P^2$ so
that $U \cap B \neq \emptyset$ and the equation of $V$ in $U \times \mathbb P^2$ is $da^2 - bc$, where $a,b,c,d \in R[x_1,x_2,x_3]$. Moreover $b \in R$ is the equation of $B_{red}$ in $U$, 
$d \in R$ and $a, c \in R[x_1,x_2,x_3]$ are forms respectively of degree $1$ and $2$ in $(x_1,x_2,x_3)$. Since $d \notin (b)$ and $V$ is irreducible, we can assume $d = 1$ up to shrinking $U$.
Now consider in $U \times \mathbb P^2$  the set $Z = \lbrace a^2 = b = c = 0 \rbrace$. It is clear that $Z$ is non empty and hence of dimension $1$. Moreover, $Z$ is contained in $\Sing V$: this
contradicts the finiteness of $\Sing V$. \end{proof}
It easily follows that the only surfaces possibly contained in $R_x \cup R_y$ are planes.
   \begin{thm} \label{surf-component} Let $S \subset R_x$ be an irreducible surface then $S = \mathbb P^2 \times o$, for some $o \in \mathbb P^2$. \end{thm}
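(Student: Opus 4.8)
The plan is to leverage the remark just established---that every irreducible surface contained in $R_x \cup R_y$ is a plane of the Segre variety $\mathbb P^2 \times \mathbb P^2$---and then use the finiteness of $\Sing V$ to discard the wrong ruling. The planes lying on $\mathbb P^2 \times \mathbb P^2$ are exactly those of its two rulings, namely $\mathbb P^2 \times \{o\}$ and $\{x_0\} \times \mathbb P^2$. So, given an irreducible surface $S \subset R_x$, the remark reduces the problem to these two cases, and it will be enough to rule out $S = \{x_0\} \times \mathbb P^2$.

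First I would record that $R_x \subset V$. This is immediate from Euler's identity: since $f$ is homogeneous of degree $2$ in the variables $x_1, x_2, x_3$, one has $\sum_i x_i f_{x,i} = 2f$, so $f$ belongs to the ideal $\mathcal J_{V,x} = (f_{x,1}, f_{x,2}, f_{x,3})$ and hence its zero locus $R_x$ lies in $V = \di(f)$.

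Next I would derive a contradiction from the assumption $S = \{x_0\} \times \mathbb P^2 \subset R_x$. That $S \subset R_x$ says $f_{x,i}(x_0, y) = 0$ for all $y$ and all $i$; that $S \subset V$, by the previous step, says $f(x_0, y) = 0$ for all $y$, and differentiating the identically vanishing function $y \mapsto f(x_0,y)$ gives $f_{y,j}(x_0, y) = 0$ for all $y$ and all $j$ as well. Therefore every partial derivative of $f$ vanishes identically along $S$, i.e.~$S \subset \Sing V$. Since $S$ is two-dimensional this contradicts the standing hypothesis that $\Sing V$ is finite. Hence $S = \{x_0\}\times\mathbb P^2$ is impossible and $S = \mathbb P^2 \times o$ for some $o \in \mathbb P^2$, as asserted.

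The only place the hypotheses genuinely enter is this exclusion of the $\{x_0\}\times\mathbb P^2$ ruling; the rest is formal, and I expect the main (minor) obstacle to be simply applying the remark in the correct direction and correctly deducing the vanishing of the $y$-partials. Should one prefer a route independent of the remark, I would instead stratify $S$ by $\pi_y$: if $\pi_y(S)$ is a point we are done at once; if $\pi_y(S)$ is a curve $B$ then the general fibre of $S \to B$ is a one-dimensional set of singular points of the conic $C_y := \pi_y^{-1}(y)$, forcing $C_y$ to have rank $1$ along $B$ and thus a non-reduced component of the discriminant, against the preceding lemma; and if $\pi_y(S) = \mathbb P^2$ then the nodes of the generically rank $2$ conics $C_y$ sweep out $S$, and a local factorization $f = L_1 L_2$ near such a node shows all partials of $f$ vanish there, so $V$ is singular along $S$, once more contradicting the finiteness of $\Sing V$.
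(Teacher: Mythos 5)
Your proposal is correct in substance, but its headline route has a structural problem: the ``remark'' you lean on --- that every irreducible surface in $R_x \cup R_y$ is a plane --- is \emph{not} an independently established result in the paper. It is a forward-looking summary sentence (``It easily follows\dots'') whose only justification is the proof of the very theorem you are asked to prove; taking it as a black box makes your first argument circular relative to the paper's logic. What is genuinely yours and genuinely correct in that route is the exclusion of the wrong ruling: the Euler identity $\sum_i x_i f_{x,i} = 2f$ gives $R_x \subset V$, and if $\{x_0\}\times\mathbb P^2 \subset R_x$ then $f(x_0,\cdot)\equiv 0$, so differentiating in $y$ kills the $y$-partials too, whence $\{x_0\}\times\mathbb P^2 \subset \Sing V$, contradicting finiteness. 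This is a clean, elementary observation the paper does not even need, since its proof produces $S \subseteq \mathbb P^2 \times \{o\}$ directly.

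Your fallback route is essentially the paper's own proof, run in the contrapositive direction: the paper notes $\pi_y(S)$ lies in the discriminant $\Gamma$, which is a reduced curve by the preceding lemma, so over a curve component the general fibre conic has rank $2$, its singular locus is a single point, and $S$ would be one-dimensional; hence $\pi_y(S)$ is a point $o$ and $S = \mathbb P^2 \times o$. Your version (one-dimensional fibres of $S \to B$ force rank $\leq 1$, hence a non-reduced discriminant component) is the same argument; you should also dispose of the rank-$0$ subcase, where $\mathbb P^2 \times \{y\} \subset V$ for all $y \in B$ would force $\mathbb P^2 \times B \subset V$, impossible for an irreducible bidegree $(2,2)$ hypersurface. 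The one real gap is in your third case: a fibrewise factorization $f(\cdot,y) = L_1L_2$ on a single conic does \emph{not} show the $y$-partials vanish at the node --- generically over $\Gamma$ the nodes of the rank-$2$ fibres sweep out the curve $R^1_x$, which consists mostly of smooth points of $V$. To make your argument work you would need the factorization to hold identically in $(x,y)$ on an analytic neighborhood (available here, up to monodromy, exactly because all nearby conics are singular), but this is needless effort: if $\pi_y(S) = \mathbb P^2$ then every fibre conic is singular, so the discriminant is all of $\mathbb P^2$, contradicting the lemma that it is a curve. With that repair, your self-contained route and the paper's proof coincide.
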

  \begin{proof}  $\pi_y(S)$ is an irreducible component of the discriminant curve $\Gamma$ of $\pi_y \vert V$. By the lemma $\Gamma$
  is reduced Assume $\pi_y(V)$ is a curve, then the previous lemma and its proof imply that the general fibre of $V$ over $\pi_y(S)$ is a conic of rank $2$. This is impossible because implies $\dim S = 1$.
 Hence $\pi_y(S)$ is a point and $S$ is a plane,  fibre of $\pi_y \vert V$.  \end{proof}
 Now we assume that $R_x$ contains a plane $P := \mathbb P^2 \times o$.  
  \begin{prop}  \label{no fixed} Let $b := P \cdot \Sing V$ then $b$ is the base locus of a pencil of conics.
 \end{prop}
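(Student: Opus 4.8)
The plan is to choose coordinates adapted to $o$ and then simply read off the scheme $b$ from the Jacobian equations of $V$ restricted to $P$. Write the equation of $V$ as $f=\sum a_{ij}x_ix_j$, with $a_{ij}=a_{ji}$ quadratic forms in $y=(y_1:y_2:y_3)$, and set $M(y):=(a_{ij}(y))$, so that $f=x^{t}M(y)x$ and $f_{x,i}=2\,(M(y)x)_i$; consequently $R_x=\lbrace (x,y)\mid M(y)x=0\rbrace$. First I would observe that the hypothesis $P=\mathbb P^2\times o\subset R_x$ forces $M(o)=0$, since $M(o)x=0$ must hold for every $x\in\mathbb P^2$. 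Normalising $o=(0:0:1)$, the vanishing $a_{ij}(o)=0$ says exactly that no $a_{ij}$ contains a $y_3^2$ term, so one may write
$$ a_{ij}(y)=y_3\,(c_{ij}y_1+d_{ij}y_2)+q_{ij}(y_1,y_2), $$
with each $q_{ij}$ quadratic in $y_1,y_2$.

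Next I would restrict the Jacobian conditions defining $\Sing V$ to the plane $P$. Because $M(o)=0$, on $P$ the three equations $f_{x,i}=2\,(M(o)x)_i$ vanish identically, and $f(x,o)=x^{t}M(o)x=0$ as well; hence $b=P\cdot\Sing V$ is cut out on $P\cong\mathbb P^2$ (via the isomorphism $\pi_x\vert_P$) by the remaining equations $f_{y,j}\vert_{P}$, $j=1,2,3$. A direct differentiation gives $f_{y,j}(x,o)=x^{t}N_jx$ with $N_j:=\tfrac{\partial M}{\partial y_j}(o)$, and from the displayed normal form one reads $N_1=(c_{ij})$, $N_2=(d_{ij})$ and, crucially, $N_3=0$ (the $\partial/\partial y_3$-derivatives all carry a factor $c_{ij}y_1+d_{ij}y_2$ vanishing at $o$). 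Therefore $b$ is the scheme $\lbrace b_1=b_2=0\rbrace\subset P$ cut by the two conics $b_1:=x^{t}N_1x$ and $b_2:=x^{t}N_2x$; in other words $b$ is the base scheme of the linear system $\langle b_1,b_2\rangle$.

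It remains to verify that $\langle b_1,b_2\rangle$ is a genuine pencil, and this is precisely where the finiteness hypothesis must be used. If $b_1,b_2$ were both zero, then $b=P$; if they were proportional (one nonzero), then $b$ would contain a conic; and if they were independent but shared a common linear factor, then $b$ would contain a line. In each degenerate case $b$ would be positive-dimensional, contradicting $b\subset\Sing V$ with $\Sing V$ finite. Hence $b_1,b_2$ are linearly independent with no common component, $\langle b_1,b_2\rangle$ is an honest pencil of conics, and $b$ is exactly its base locus (a length-$4$ scheme by B\'ezout), as claimed.

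I expect the only delicate point to be this last non-degeneracy step: one must invoke the finiteness of $\Sing V$ itself (not merely of $\Sing R_x$) to rule out all the ways in which $b_1,b_2$ could fail to span a pencil, and to ensure that the scheme-theoretic intersection $P\cdot\Sing V$ agrees with the full base scheme of $\langle b_1,b_2\rangle$ rather than only with its support.
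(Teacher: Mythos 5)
Your proof is correct and takes essentially the same approach as the paper: after normalising $o=(0:0:1)$ the paper writes $f=q_{11}y_1^2+q_{22}y_2^2+q_{12}y_1y_2+q_{13}y_1y_3+q_{23}y_2y_3$ with the $q_{ij}$ quadratic in $x$ and restricts the derivatives $f_{y,j}$ to $P$, obtaining $b=\lbrace q_{13}=q_{23}=0\rbrace$ --- exactly your conics $b_1=x^{t}N_1x$ and $b_2=x^{t}N_2x$ in matrix dress. Your closing non-degeneracy check (that $b_1,b_2$ are independent with no common component, forced by the finiteness of $\Sing V$, so that they genuinely span a pencil with finite base scheme) is left implicit in the paper's proof, so it is a welcome refinement rather than a divergence.
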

\begin{proof} We can assume that $P = \lbrace y_1 = y_2 = 0 \rbrace$. Then the equation of $V$ is:
 $$ f = q_{11}y_1^2 + q_{22}y_2^2 + q_{12}y_1y_2 + q_{13}y_1y_3 + q_{23}y_2y_3. $$
Restricting the derivatives $f_{y,1}, f_{y,2}, f_{y,3}$ to $P_u$ we conclude that 
$$ \Sing V \cdot P = \lbrace y_1 = y_2 = q_{13} = q_{23} = 0 \rbrace.$$ Hence $b$ is the base locus of 
  the pencil of conics $\lambda q_{13} + \mu q_{3} = 0$.
  \end{proof}
\begin{rem}Ê\label{Csquare} The locus $b$ is the complete intersection $\lbrace q_{13} = q_{23}Ê= 0 \rbrace$.  Assume for simplicity that $b$ is smooth and let $\sigma: Y \to \mathbb P^2$ be the blowing up of $o = \pi_y(P)$. Then a  standard  resolution $\phi: \tilde V \to V$ of 
$\Sing V$ at $b$ is provided by the Cartesian square
\begin{equation}
\label{vertplane} \begin{CD}
 {\tilde V} @>{\phi}>> V \\
 @V{\tilde \pi_y}VV @VV{\pi_y}V \\
{Y} @>{\sigma}>> {\mathbb P^2.} \\
\end{CD}
\end{equation}
Let $\tilde P$ be the strict transform of $P$ and $E = \sigma^{-1}(o)$. Then $\tilde \pi_y(\tilde P) = E$ and the morphism $\tilde \pi_y: \tilde P \to E$ is defined by the pull-back of the pencil of conics of $\lambda q_{13} + \mu q_{23} = 0$.
 \end{rem}
  It is now useful to define the sets
$$
\mathcal P_x := \lbrace x \in \mathbb P^2 \ \vert \ x \times \mathbb P^2 \subset V \rbrace \ , \ \mathcal P_y := \lbrace y \in \mathbb P^2 \ \vert \ \mathbb P^2  \times y \subset V \rbrace. 
$$
\begin{defi} \it $t_x(V)$ and $t_y(V)$ are the cardinalities of $\mathcal P_x$, and $\mathcal P_y$. \end{defi}
\begin{lemm} \label{gen-pos-lemma} Both $\mathcal P_x$ and $\mathcal P_y$ have at most four points and no three are collinear.  
 \end{lemm}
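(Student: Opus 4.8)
The plan is to read off $\mathcal P_y$ (and, symmetrically, $\mathcal P_x$) as the common zero locus of the six conics occurring as coefficients of the $x$-monomials of $f$. Writing $f=\sum_{1\le i\le j\le 3} a_{ij}(y)\,x_ix_j = x^{T}M(y)x$, where $M(y)$ is the symmetric $3\times 3$ matrix whose entries $a_{ij}$ are quadratic forms in $y$, the plane $\mathbb P^2\times y_0$ lies in $V$ precisely when $f(\,\cdot\,,y_0)\equiv 0$ in $x$, i.e.\ when $M(y_0)=0$. Thus $\mathcal P_y=\{\,y : a_{ij}(y)=0\ \forall i,j\,\}$ is cut out in $\mathbb P^2$ by six conics, and $\mathcal P_x$ is handled identically after exchanging the two factors. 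Both bounds will then follow by showing that any violation forces $\Sing V$ to contain a surface, contradicting the standing finiteness hypothesis.

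First I would rule out three collinear points. Suppose $y_1,y_2,y_3\in\mathcal P_y$ are distinct and lie on a line $\ell$. Restricting $f$ to $\mathbb P^2\times\ell$ and parametrising $\ell$ by $(s:t)\in\mathbb P^1$, each entry $a_{ij}$ restricts to a binary quadratic in $(s,t)$ vanishing at the three points $y_1,y_2,y_3$; since a nonzero binary quadratic has only two roots, every restricted $a_{ij}$ is identically zero. Hence $f$ vanishes on all of $\mathbb P^2\times\ell$, so this irreducible threefold is a component of $V$. Its residual component, of class $2h_x+h_y$, meets $\mathbb P^2\times\ell$ in a nonempty two-dimensional locus contained in $\Sing V$, contradicting finiteness. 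Therefore no three points of $\mathcal P_y$ are collinear.

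Next I would bound the cardinality. Assume $\mathcal P_y$ has at least five points and, by the previous step, no three of them are collinear. Five points with no three collinear impose independent conditions on conics and so lie on a unique conic $F$ (necessarily irreducible, since a pair of lines could carry at most four of them). Each $a_{ij}$ vanishes at these five points, hence is a scalar multiple of $F$; writing $M(y)=F(y)\,M_0$ for a nonzero constant symmetric matrix $M_0$, we obtain $f=F(y)\cdot\bigl(x^{T}M_0x\bigr)$. Thus $V$ is reducible, $V=(\mathbb P^2\times C)\cup(Q\times\mathbb P^2)$, where $C=\{F=0\}$ and $Q=\{x^{T}M_0x=0\}$ are conics in the two factors. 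The two components meet along the surface $Q\times C$, which again lies in $\Sing V$ and contradicts finiteness. Hence $\mathcal P_y$ has at most four points, and the argument for $\mathcal P_x$ is symmetric.

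The routine inputs — that five points in general position impose independent conditions on conics, and that the conic through them is unique — are standard, as is the identification of the planes $\mathbb P^2\times y$ with the planar surfaces of $R_x$ produced by Theorem \ref{surf-component}. The crux is the same in both steps: a degeneration of the coefficient conics $a_{ij}$ (either simultaneous vanishing along a line, or common proportionality to a single conic) splits off a product component of $V$ and thereby creates a two-dimensional singular locus. The main point to get right is that each such degeneration genuinely forces reducibility with a two-dimensional intersection of components, so that the finiteness of $\Sing V$ can be invoked to reach the contradiction.
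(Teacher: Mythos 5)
Your proof is correct and follows essentially the same route as the paper: in both, the coefficient conics of $f$ are forced to vanish on a line (three collinear points) or to be proportional to the unique conic through five points, splitting a product component off $V$. The only difference is cosmetic — the paper concludes by contradicting the irreducibility of $V$ directly, while you spell out the implicit step that reducibility yields a two-dimensional locus inside $\Sing V$, contradicting its finiteness.
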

 \begin{proof}  Assume $\mathcal P_x$ contains five points $o_1 \dots o_5$ and let $C$ a conic through these. It is easy to see that then $V$ properly contains  $\mathbb P^2 \times C$: against the irreducibility 
 of $V$.  The same proof applies to $V$ and $\mathbb P^2 \times L$, where $L$ is a line through three  points of $\mathcal P_x$. \end{proof}
 It follows from the previous results that $R_x$ and $R_y$ admit the decompositions $$ R_x = R_x^1 \cup R_x^2 \ \ \text {and} \ \ R_y =R^1_y + R^2_y$$ 
 where $R^1_x, R^2_y$ are curves and $R^2_x, R^2_y$ disjoint union of planes respectively of class $h_x^2, h_y^2$.  In what follows we assume, for each plane $P$ in $R^2_x$, that $b = P \cdot \Sing V$ is smooth. Assuming this we now give an alternative proof of O'Grady's bound $\vert \Sing V \vert \leq 20$, \cite{O}. This will be useful for further purposes. Dropping the smoothness assumption for $b$,
 one has to extend the argument of the proof to any $b$ which is complete intersection of two conics. Since the bound is known, we avoid to address the singular cases.  In particular the next theorem suggests that a $V$-threefold $V$ such that $\vert \Sing V \vert = 19$ has to contain four disjoint planes.

 \begin{thm} \label{excess}  Let $t := \mi \lbrace t_x(V), t_y(V) \rbrace$ then $ \vert \Sing V \vert \leq 15 + t $ and $t \leq 4$. \end{thm}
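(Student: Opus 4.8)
The plan is to bound $\vert \Sing V\vert$ by playing the two projections against each other and exploiting the residual of the plane components inside one ramification scheme. The bound $t\le 4$ is immediate: by Lemma \ref{gen-pos-lemma} both $\mathcal P_x$ and $\mathcal P_y$ have at most four points, so $t=\mi\lbrace t_x(V),t_y(V)\rbrace\le 4$. For the main inequality I would, after possibly exchanging the two factors of $\mathbb P^2\times\mathbb P^2$, assume $t=t_y(V)$ and work with $R_x$. By Theorem \ref{surf-component} the surface components of $R_x$ are exactly the horizontal planes $\mathbb P^2\times o$ contained in $V$, that is the $t$ planes $P_1,\dots,P_t$ indexed by $\mathcal P_y$; being distinct fibres of $\pi_y$ they are pairwise disjoint. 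I then write $R_x=R_x^1\cup R_x^2$ with $R_x^2=P_1\cup\dots\cup P_t$ and $R_x^1$ the residual curve.

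First I would compute the class of $R_x^1$. The scheme $R_x$ is the zero locus of a section of $\mathcal E:=\mathcal O(h_x+2h_y)^{\oplus 3}$ on $X=\mathbb P^2\times\mathbb P^2$, whose expected class is $c_3(\mathcal E)=(h_x+2h_y)^3=6h_x^2h_y+12h_xh_y^2$, exactly as in Theorem \ref{no plane}. Each $P_j=\mathbb P^2\times o$ is a smooth surface with trivial normal bundle in $X$, so its Segre class is simply $[P_j]=h_y^2$ and, by the residual (excess) intersection formula, its contribution to $c_3(\mathcal E)$ is the dimension-one part of $c(\mathcal E)\cap[P_j]$, namely $3(h_x+2h_y)\cdot h_y^2=3h_xh_y^2$. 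Subtracting the $t$ plane contributions gives
\begin{equation}
[R_x^1]=6h_x^2h_y+(12-3t)\,h_xh_y^2 .
\end{equation}

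Next I would split $\Sing V$ according to the planes. A point $o\in\Sing V$ lying on some $P_j$ has $\pi_y(o)\in\mathcal P_y$; by Proposition \ref{no fixed} the set $P_j\cdot\Sing V$ is the base locus of a pencil of conics, hence (under the standing smoothness hypothesis on $b$) consists of at most four points, so the on-plane singularities number at most $4t$. Every remaining $o\in\Sing V$ has its horizontal plane $\mathbb P^2\times\pi_y(o)$ not contained in $V$, so by Lemma \ref{lemma3.3} it lies in $\Sing R_x$, and being off the planes it is a point of multiplicity $\ge 2$ of the curve $R_x^1$. As in Theorem \ref{no plane}, $\Sing V$ is contained in $R_y$, hence in every member of the net generated by $D_{y,1},D_{y,2},D_{y,3}$; since $\Sing V$ is finite, a general $D$ in this net meets $R_x^1$ properly, in a scheme of length $[R_x^1]\cdot(2h_x+h_y)=30-6t$. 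Each off-plane singular point contributes multiplicity $\ge 2$, so there are at most $(30-6t)/2=15-3t$ of them.

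Adding the two counts yields $\vert\Sing V\vert\le (15-3t)+4t=15+t$, together with $t\le 4$, as required. The main obstacle is the residual computation of the second paragraph: one must be sure that each $P_j$ enters $R_x$ with the exact excess class $3h_xh_y^2$, so that deleting it lowers the length $[R_x^1]\cdot(2h_x+h_y)$ by precisely $6$, exactly compensating the $4$ on-plane singularities it may carry. This can be verified either through Fulton's residual intersection formula as above, or more concretely by a local analysis of $R_x$ along each $P_j$ in the coordinates of Proposition \ref{no fixed} and Remark \ref{Csquare}. One also has to justify the genericity of $D$, namely that no component of $R_x^1$ is fixed in the net, which again follows from the finiteness of $\Sing V$.
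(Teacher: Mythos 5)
Your proof is correct and follows essentially the same route as the paper: decompose $R_x$ into its curve part and the planes over $\mathcal P_y$, cut with a general $D$ in the net of the $D_{y,i}$, remove an excess contribution of $6$ per plane from the total $30$ (via Fulton's residual/excess formula), count off-plane singularities with multiplicity $\geq 2$ by Lemma \ref{lemma3.3}, and add at most $\deg b = 4$ on-plane points per plane by Proposition \ref{no fixed}. The only cosmetic difference is that the paper performs the excess computation on the zero-dimensional scheme $D \cdot R_x$, with excess component the smooth conic $B = D \cdot P$ chosen disjoint from $R_x^1$, and treats $t=1$ explicitly, whereas you compute the one-dimensional residual class $[R_x^1] = 6h_x^2h_y + (12-3t)h_xh_y^2$ directly and handle all $t$ uniformly — the numerical bookkeeping ($30 - 6t$, halved, plus $4t$) is identical.
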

 \begin{proof} Let $t = 1$ then $R^2_x$ is a plane $\mathbb P^2 \times o$.  Consider a general member $D$ of the net generated by $D_{y,1}, D_{y,2},D_{y,3}$. As in the proof of theorem \ref{no plane},
 we can assume that $D$ intersects the curve $R^1_x$ properly and that $B := D \cdot R^2_x$  is a smooth conic containing $b := \Sing V \cdot R^2_x$. We can also assume that $B$ is disjoint from $R^2_x \cdot R^1_x$. Indeed, as follows from remark \ref{Csquare}, this consists of the singular points of the singular conics through $b$. 
Now we consider the intersection scheme $D \cdot R_x$. This is defined by 3 divisors of class $h_x + 2h_y$ and one of class $2h_x + h_y$. In \ref{surf-component} this intersection was proper and hence of length $30$. Here it is not proper and $B$ is the excess intersection scheme. Since $b$ is smooth one can check that $B$ is a smooth irreducible component of $D \cdot R_x$.
Applying excess intersection formula to $B$, \cite[6.3]{F}, one computes that $D \cdot R_x = B \cup Z$, where $Z$ has length $24$ and $B \cap Z = \emptyset$. Now, arguing as in the proof of \ref{surf-component}, each 
$o \in Z$ has multiplicity $\geq 2$. This implies that the cardinality of $\Sing V$ is $\leq 12 + \deg b = 16$ and proves the statement for $t = 1$. The argument easily extends to $t \leq 4$.
\end{proof}
\begin{rem} Consider a general $\overline D$ of the net generated by $D_{x,1}, D_{x,2}, D_{x_3}$. Then $S = D \cdot \overline D$ is a complete intersection of
class $2h_x^2 +5h_xh_y + 2h_y^2$. Assume $S$ is integral with at most isolated nodes, which is the general case. Then $S$ is a K3 surface through $R^1_x \cup B$. Let $\sigma: S' \to S$ be
its minimal desingularization and $B'$ and $H'$ the pull-back of $B$ and $H \in \vert \mathcal O_S(1,2)\vert$. It is easily seen that $Z$ has length
$(H'-B')^2 = 30 + B^{'2} -2H'B' = 24$. This recovers the above excess intersection formula for $B$, cfr. \cite[13.3.6]{EH}.
\end{rem}
 \section{Highly singular V-threefolds and the Igusa quartic}
Before introducing the main family of finite Morin configurations to be considered, and explicitly reconstruct in it the unique one of maximal cardinality, we already use the previous results to describe its associated $V$-threefold and its relation to a well known threefold in $\mathbb P^4$, namely the Igusa quartic. 
\begin{defi} \it Let $o \in \Sing V$, we say that $o$ is a tangential singularity if
$$
(\pi_x(o) \times \mathbb P^2) \cup (\mathbb P^2 \times \pi_y(o)) \subset V.
$$
Moreover we denote by $t(V)$ the number of these singularities on $V$.
\end{defi}
Assume $V$ is defined by $A$, so that $F_A = \mathbb P(A) \cdot \mathbb G$. Then, by the latter theorem,
$$ \vert F_A \vert \leq 16 + t(V) $$
and $0 \leq t(V) \leq 4$. This gives a constructive way to produce families of finite Morin configurations of higher length $\ell$ in the range
$16 \leq \ell \leq 20$. Indeed, let $t := t(V) \geq 1$ then $\Sing V$ is necessarily endowed with a set of tangential singularities
$$ O_t := \lbrace o_1 \dots o_t \rbrace \subset \Sing V $$
such that the projection maps $\pi_x: O_t \to \mathbb P^2$ and $\pi_y: O_t \to \mathbb P^2$ are injective. In particular $V$ contains $2t$ distinct planes, say $ \lbrace u_i \rbrace \times \mathbb P^2 \ , \ \mathbb 
P^2 \times \lbrace v_j  \rbrace $ with $1 \leq i,j \leq t$. Then a set $O_t$ as above is $ \lbrace o_i := (u_i,v_i), \ i = 1 \dots t \rbrace$. By lemma \ref{gen-pos-lemma} the sets
$$
\lbrace u_1 \dots u_t \rbrace \ , \  \lbrace v_1 \dots v_t \rbrace
$$
are sets of distinct points so that no three are collinear.  To construct configurations of length $\ell \geq 16$ we consider the union of planes
$$
U_t := \bigcup_{1 \leq i,j \leq t} (\lbrace u_i \rbrace \times \mathbb P^2) \cup (\mathbb P^2 \times \lbrace v_j  \rbrace)
$$
and its ideal sheaf $\mathcal I_t$  in $\mathbb P^2 \times \mathbb P^2$. This defines the linear system of $V$-threefolds
$$
\vert \mathcal I_t(2,2) \vert.
$$
The case $t = 4$ leads to Morin configurations of length $\ell \geq 16$, in particular to the maximal one with $20$ planes. In what follows we assume $t = 4$. Since
the points $u_1 \dots u_4$ and $v_1 \dots v_4$ are in general position, we can fix coordinates $(x,y)$ on $\mathbb P^2 \times \mathbb P^2$ so that $(u_i,v_i)$ is in the diagonal $\lbrace x - y = 0 \rbrace$. We can also assume that \begin{equation} u_1 = (1:0:0) ,  u_2 = (0:1:0) ,  u_3 = (0:0:1)  , u_4 = (1:1:1). \end{equation} Let $q_1(x)$ and $q_2(x)$ be quadratic forms in $x$ generating the ideal of $\lbrace u_1 \dots u_4 \rbrace$. Then $q_1(y)$ and $q_2(y)$ generate the ideal of $\lbrace v_1 \dots v_4 \rbrace$ and the next theorem easily follows.
 \begin{thm} $ \vert \mathcal I_4(2,2) \vert$ is the $3$-dimensional linear system  
$$
\lambda q_1(x)q_1(y) + \mu q_2(x)q_2(y) + \nu q_1(x)q_2(y) + \rho q_2(x)q_1(y) = 0.
$$
Let $V \in \vert \mathcal I_4(2,2) \vert$ be general then $\Sing V$ is the set of $16$ tangential singulartities $$ \lbrace o_{ij} =(u_i,v_j), \ 1 \leq i,j  \leq 4 \rbrace.$$
\end{thm}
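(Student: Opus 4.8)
The plan is to treat the statement in two parts: first identify the linear system $|\mathcal I_4(2,2)|$, then compute the singular locus of a general member.

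For the first part I would read the two incidence conditions off as linear conditions on the $36$-dimensional space of bidegree $(2,2)$ forms $f \in \Sym^2 \otimes \Sym^2$ (quadratic in $x$, quadratic in $y$). The plane $\lbrace u_i \rbrace \times \mathbb P^2$ lies on $V = \lbrace f = 0 \rbrace$ exactly when $f(u_i, y) \equiv 0$ in $y$; imposing this for $i = 1, \dots, 4$ amounts to applying the evaluation map $Q \mapsto (Q(u_1), \dots, Q(u_4))$ in the quadratic-in-$x$ factor and asking that the result vanish. Since $u_1, \dots, u_4$ are in general position they impose independent conditions on conics, so this evaluation map is surjective with two-dimensional kernel $\langle q_1(x), q_2(x) \rangle$; hence the four planes $\lbrace u_i \rbrace \times \mathbb P^2 \subset V$ force $f \in \langle q_1(x), q_2(x) \rangle \otimes \Sym^2(y)$. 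The symmetric argument for the planes $\mathbb P^2 \times \lbrace v_j \rbrace$ forces $f \in \Sym^2(x) \otimes \langle q_1(y), q_2(y) \rangle$. Intersecting the two conditions leaves exactly $\langle q_1(x), q_2(x) \rangle \otimes \langle q_1(y), q_2(y) \rangle$, a four-dimensional space spanned by the products $q_a(x) q_b(y)$; its projectivization is the asserted $\mathbb P^3$.

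For the singular locus I would write a general member as $f = q_1(x) L_1(y) + q_2(x) L_2(y) = q_1(y) M_1(x) + q_2(y) M_2(x)$, where $L_1 = \lambda q_1 + \nu q_2$, $L_2 = \rho q_1 + \mu q_2$, $M_1 = \lambda q_1 + \rho q_2$, $M_2 = \nu q_1 + \mu q_2$. Denoting by $A_1, A_2$ the symmetric matrices of $q_1, q_2$, the vanishing of the $x$-partials and of the $y$-partials becomes, by Euler, the two vector equations
$$ (E_x): \ \big(L_1(y) A_1 + L_2(y) A_2\big)\, x = 0, \qquad (E_y): \ \big(M_1(x) A_1 + M_2(x) A_2\big)\, y = 0. $$
Since $q_1(u_i) = q_2(u_i) = 0$ and $q_1(v_j) = q_2(v_j) = 0$, at each $o_{ij} = (u_i, v_j)$ one has $L_1(v_j) = L_2(v_j) = 0$ and $M_1(u_i) = M_2(u_i) = 0$, so $(E_x)$ and $(E_y)$ hold identically there; thus all sixteen $o_{ij}$ lie in $\Sing V$ for every member of the system.

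The heart of the proof is the converse inclusion for general coefficients. Given $(x,y) \in \Sing V$ I would run a case analysis on whether the pencil values $(L_1(y) : L_2(y))$ and $(M_1(x) : M_2(x))$ degenerate. If $(L_1(y), L_2(y)) = (0,0)$ then $y$ is a common zero of two distinct members of the pencil $\langle q_1, q_2 \rangle$, hence $y = v_j$; at a simple base point the gradients $\nabla q_1(v_j), \nabla q_2(v_j)$ are independent (the members of the pencil have distinct tangent lines there), so $(E_y)$ forces $M_1(x) = M_2(x) = 0$, whence $x = u_i$ and $(x,y) = o_{ij}$. By symmetry the same holds if $(M_1(x), M_2(x)) = (0,0)$. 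In the remaining case both values are nonzero, and then $(E_x)$ forces the conic $L_1(y) q_1 + L_2(y) q_2$ to be singular with $x$ as its vertex, so $x$ is one of the three diagonal points $p_a$ of the quadrangle $\lbrace u_1, \dots, u_4 \rbrace$; symmetrically $y$ is one of the three $p_b$. The main obstacle is to rule these nine candidates $(p_a, p_b)$ out: for each, the constraints $(L_1(p_b):L_2(p_b)) = (s_a:t_a)$ and $(M_1(p_a):M_2(p_a)) = (s_b:t_b)$, where $(s_a:t_a)$ is the pencil parameter of the singular conic with vertex $p_a$, are nontrivial linear equations in $(\lambda, \mu, \nu, \rho)$, so each candidate survives only on a proper linear subspace of the $\mathbb P^3$ of coefficients. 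A general $V$ avoids this finite union of proper subvarieties, so no $(p_a, p_b)$ is singular and $\Sing V = \lbrace o_{ij} \rbrace_{1 \le i,j \le 4}$, sixteen distinct points, well within the a priori bound $15 + t = 19$ of Theorem \ref{excess}.
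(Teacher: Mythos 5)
Your proof is correct and takes the same route the paper indicates: the paper merely notes that $q_1, q_2$ generate the ideal of the four general points in each factor and then asserts the theorem ``easily follows,'' giving no further argument. Your tensor-intersection computation identifying $\vert \mathcal I_4(2,2)\vert$ with $\mathbb P(\langle q_1(x), q_2(x)\rangle \otimes \langle q_1(y), q_2(y)\rangle)$, and your Euler-relation case analysis for $\Sing V$ --- in particular the generic exclusion of the nine diagonal-point pairs $(p_a, p_b)$, which is exactly the step the paper leaves implicit --- correctly supply the details the paper omits.
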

Later in this paper we will see that the branch sextic $\Gamma$ of $\pi_y: V \to \mathbb P^2$ is the union of three conics of the pencil $\lambda q_1(y) + \mu q_2(y) = 0$. The most interesting case of $V$ arises when $\Gamma$ is the union of the three singular conics of the pencil, that is  
$$
\Gamma = \lbrace y_1y_2y_3(y_1-y_2) (y_1-y_3)(y_2-y_3) = 0 \rbrace.
$$
Then it turns out that $V$ has 19 ordinary double points: the $16$ tangential singularities and $3$ other points, one over each double point of $\Gamma $.   We will also show that a unique $V$ satisfies $\vert \Sing V \vert = 19$ and that it is defined by a complete Morin configuration of $20$ planes in $\mathbb P^5$, which is unique as well. For reasons to be made clear in the end of this section, we fix for such a $V$ the notation $V^{ram}$. Its equation is  
\begin{equation}
\label{V_{ram}}
 (x_1-x_2)x_3(y_1-y_2)y_3 + x_1(x_2-x_3)y_1(y_2-y_3) + x_2(x_1-x_3)y_2(y_1-y_3) = 0.
\end{equation}
We continue this section by some constructions useful to put  $V^{ram}$ in its due geometric perspective.   Let $O_4 \subset \mathbb P^2 \times \mathbb P^2$ be the set of four points as above and let $\mathcal B_4$ be the linear system of $V$-threefolds singular at $O_4$.   We consider the linear projection of 
 center $O_4$
\begin{equation} \label{phi}
\phi: \mathbb P^2 \times \mathbb P^2 \to \mathbb P^4.
\end{equation}
of the Segre embedding $\mathbb P^2 \times \mathbb P^2 \subset \mathbb P^8$. The map $\phi$ is defined by the linear system $\vert \mathcal I(1,1) \vert$, where $\mathcal I$ is the ideal sheaf of $O_4$ in $\mathbb 
P^2 \times \mathbb P^2$. The base scheme of  $\vert \mathcal I(1,1) \vert$ is precisely $O_4$. Then, since the Segre product $\mathbb P^2 \times \mathbb P^2$ has degree six, it follows $\deg \phi = 2$. 
\par The ramification divisor of $\phi$ is strictly related to the subject of this paper and to a very well known  threefold and its dual. We recall that \it the Segre primal \rm
 is the unique, up to projective equivalence,  cubic threefold $\Delta$ whose singular locus consists of ten double points, which is the maximum for a cubic hypersurface with isolated singularities  in
a $4$-space. Of equivalent interest is its dual hypersurface
\begin{equation}
\label{Igusa} \Delta^* \subset \mathbb P^4.
\end{equation}
This is in turn a quartic threefold which is very well known. It is \it the Igusa quartic, \rm see e.g. \cite{CKS} and \cite{Do}. In particular, in the recent paper \cite{CKS}, it is shown that:
\begin{thm}  $\Delta^*$ is the branch divisor of $\phi$. \end{thm}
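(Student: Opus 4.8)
The plan is to pin down the numerical type of the branch divisor $B$ of $\phi$ by an intersection computation on a resolution, and then to identify the resulting quartic with $\Delta^{*}$ through its singular locus and tropes. First I would resolve $\phi$. Since the base scheme of $|\mathcal I(1,1)|$ is the reduced set $O_4$, blowing it up yields $\sigma\colon \tilde X\to \mathbb P^2\times\mathbb P^2$ with exceptional divisors $E_1,\dots,E_4$, and $f:=\phi\circ\sigma\colon \tilde X\to\mathbb P^4$ becomes a morphism with $f^*\mathcal O_{\mathbb P^4}(1)=H:=\sigma^*\mathcal O(1,1)-E_1-\cdots-E_4$, generically $2:1$ by the degree count already recorded. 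The ramification formula $K_{\tilde X}=f^*K_{\mathbb P^4}+R$, together with $K_{\tilde X}=\sigma^*\mathcal O(-3,-3)+3\sum_iE_i=-3H$ and $f^*K_{\mathbb P^4}=-5H$, forces $R=2H$; thus the ramification divisor is, downstairs, a bidegree $(2,2)$ hypersurface double along $O_4$, i.e. a $V$-threefold of the family $\mathcal B_4$, and in fact $R=V^{ram}$. As over a general point of $B$ the two sheets of $f$ meet at a single point of $R$, the map $f|_R\colon R\to B$ is birational, so by the projection formula
\[
\deg B=H_{\mathbb P^4}^3\cdot B=H^3\cdot R=2\,H^4=2(6-4)=4,
\]
using $(\sigma^*\mathcal O(1,1))^4=6$ and $E_i^4=-1$. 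Hence $B=\phi(V^{ram})$ is a quartic threefold.

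Next I would identify $B$ with $\Delta^{*}$. The most concrete route uses the coordinates already fixed, in which $V^{ram}$ has the explicit equation displayed above: I would compute $f$ in the basis of $|\mathcal I(1,1)|$ given by the two polarized conics $q_1,q_2$ of the pencil through $O_4$ together with three antisymmetric $(1,1)$-forms, eliminate $(x,y)$ to obtain the quartic equation of $B$, and bring it by a linear change of coordinates into the standard $\mathfrak S_6$-symmetric Igusa normal form of \cite{CKS}, \cite{Do}. A more intrinsic identification verifies that $\Sing B$ is the expected configuration of $15$ lines and invokes the classical projective rigidity of $\Delta^{*}$ as the unique quartic threefold singular along such a configuration. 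Here the dual structure is already visible: the ten tropes of $\Delta^{*}$ arise as the images under $f$ of the four exceptional divisors $E_i$ (each mapped isomorphically onto a hyperplane, since $H|_{E_i}=\mathcal O_{\mathbb P^3}(1)$) and of the six reducible pullbacks attached to the rank-one forms $\alpha_S\beta_{S^c}^{T}$ indexed by the three partitions of $O_4$ with their two $x/y$ assignments; these $4+6=10$ hyperplanes are dual to the ten nodes of the Segre cubic.

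The main obstacle is precisely this identification step. For the explicit route the difficulty is the elimination and the recognition of the normal form; for the intrinsic route it is the complete enumeration of $\Sing B$, proving it consists of exactly these $15$ lines with the correct incidences so that the uniqueness theorem applies. One might hope to shortcut via duality, since it suffices to prove that $B^{*}$ is the Segre cubic $\Delta$ --- the very threefold that surfaced in this paper as half the discriminant of the quadrics through the quintic Del Pezzo $Y$. The natural candidate for $B^{*}$ is the determinantal cubic $\{\det M=0\}$ cut on the dual $\mathbb P^4=|\mathcal I(1,1)|$ by the matrices $M$ defining singular $(1,1)$-divisors $\{x^{T}My=0\}$; however its nodes coming from rank-one forms are only the six points $\alpha_S\beta_{S^c}^{T}$, so recovering all ten nodes of $\Delta$ forces one to exhibit the four further rank-two points at which the special section $|\mathcal I(1,1)|$ is tangent to the determinantal cubic, which is as delicate as the direct computation. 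For this reason I would run the singular-locus identification as the main line and keep the duality picture as a remark.
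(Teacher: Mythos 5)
First, a point of calibration: the paper does not actually prove this theorem --- it is quoted verbatim from \cite{CKS}, where the double cover of $\mathbb P^4$ branched along the Igusa quartic (the Coble fourfold) is identified with the blow-up of $\mathbb P^2\times\mathbb P^2$ at four general points. So your attempt must stand on its own. Its numerical groundwork does: $H^4=(\sigma^*\mathcal O(1,1)-\sum E_i)^4=6-4=2$ gives $\deg\phi=2$; $K_{\tilde X}=-3H$ together with Hurwitz gives $R\in|2H|$, i.e.\ downstairs a member of $\mathcal B_4$; and since $H$ is nef (granted the one-word check that blowing up the reduced base scheme $O_4$ really resolves $|\mathcal I(1,1)|$, with no infinitely near base points on the $E_i$), any $f$-contracted component of $R$ meets $H^3$ trivially, so $\deg B=H^3\cdot R=2H^4=4$ is robust, and $f|_R\to B$ is birational because the cover has degree $2$. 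All of this is correct.

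The genuine gap is exactly where you yourself locate it, and it is not a minor one: nothing in the text establishes that this quartic is $\Delta^*$, which is the entire content of the statement --- what you prove is only that the branch divisor is \emph{some} quartic threefold. Both proposed identification routes are left unexecuted. The explicit route would require writing out the five quadrics defining $\phi$, computing the Jacobian (this is also what is needed to justify your interpolated assertion ``in fact $R=V^{ram}$'', which in the paper is a separate theorem proved precisely by that computation, and which your degree count does not need and does not yield), and performing the elimination and normal-form recognition; none of this is done. The intrinsic route rests on a rigidity claim --- that $\Delta^*$ is the unique quartic threefold singular along fifteen lines in the stated configuration --- which you neither reference nor prove, and even granting it you would still have to determine $\Sing B$ completely and verify the incidence structure; as you concede, this is comparable in difficulty to the direct computation, and your duality detour runs into the same problem of producing the four missing nodes. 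As it stands, the proposal proves the easy half and defers the hard half; to finish you must either carry out one of your two identifications in full or do what the paper does and invoke \cite{CKS}.
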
 
Now let us consider as in \ref{V_{ram}} the threefold $V^{ram}$, which defines the unique complete Morin configuration of 20 planes in $\mathbb P^5$. Relying on its equation, and on the equations of $\phi$,  it is not difficult to compute the image of $V^{ram}$ by $\phi$ and conclude as follows.
 \begin{thm} $V^{ram}$ is the ramification of $\phi$ and $\phi(V^{ram})$ is the Igusa quartic. \end{thm}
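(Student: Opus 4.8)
The plan is to prove the two assertions in the order stated, but with the second reduced to the first. Namely, $\phi$ is generically two-to-one onto $\mathbb{P}^4$, and the image of the ramification divisor of a generically finite degree two map is exactly its branch divisor; by the preceding theorem the latter is the Igusa quartic $\Delta^*$. Hence, once I have shown that $V^{ram}$ is the ramification divisor $R$ of $\phi$, the second assertion follows at once: $\phi(V^{ram}) = \phi(R) = \Delta^*$. So the real content is the equality $R = V^{ram}$.

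First I would write $\phi$ explicitly. The sections of $|\mathcal{I}(1,1)|$ are the bilinear forms vanishing at $O_4$; since each off-diagonal monomial $x_iy_j$ ($i \neq j$) vanishes at the three coordinate points $o_{11}, o_{22}, o_{33}$ and takes the value $1$ at $o_{44}$, these sections are exactly the combinations $\sum_{i\neq j} c_{ij} x_i y_j$ with $\sum_{i\neq j} c_{ij} = 0$. This presents $\phi$ as a map given by bidegree $(1,1)$ forms and recovers $\deg\phi = 2$. To locate $R$ I would first determine its class by adjunction. Let $\pi \colon \tilde X \to \mathbb{P}^2\times\mathbb{P}^2$ be the blow-up of the four points of $O_4$, with exceptional divisors $E_1, \dots, E_4$; then $\phi$ lifts to a generically finite degree two morphism $\tilde\phi$ defined by $|\pi^*\mathcal{O}(1,1) - \sum_i E_i|$, and one checks there are no contracted divisors (each $E_i$ maps isomorphically onto a hyperplane of $\mathbb{P}^4$). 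From $K_{\tilde X} = \pi^*\mathcal{O}(-3,-3) + 3\sum_i E_i$ and $\tilde\phi^* K_{\mathbb{P}^4} = \pi^*\mathcal{O}(-5,-5) + 5\sum_i E_i$, the ramification formula yields $R \sim \pi^*\mathcal{O}(2,2) - 2\sum_i E_i$. Thus $\pi_* R$ is a bidegree $(2,2)$ hypersurface singular at each point of $O_4$, i.e.\ a $V$-threefold in $\mathcal{B}_4$ having the four diagonal nodes $o_{11}, \dots, o_{44}$ — exactly matching the shape of $V^{ram}$.

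It then remains to identify $R$ precisely inside $\mathcal{B}_4$. Because the class together with those four nodes leaves many degrees of freedom, I would compute the ramification locus directly: in affine coordinates the vanishing of the Jacobian determinant of $\phi$ is, by the class computation, a bidegree $(2,2)$ form, and I would verify that it is proportional to the left-hand side of the displayed equation \eqref{V_{ram}}. An equivalent and perhaps cleaner check is to pull back the defining quartic of $\Delta^*$ and confirm that $\phi^*\Delta^*$ equals a nonzero constant times the square of the left-hand side of \eqref{V_{ram}}; this single identity gives simultaneously $R = V^{ram}$ and $\phi(V^{ram}) \subseteq \Delta^*$. To keep the computation short I would exploit the symmetry of the whole configuration under the group $S_4$ permuting $u_1, \dots, u_4$ diagonally on the two factors, together with the swap $x \leftrightarrow y$; these symmetries act on the space of $(2,2)$ forms and reduce the matching of coefficients to a few invariant ones.

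The main obstacle is precisely this last step. Adjunction pins down only the linear equivalence class of $R$ and the four diagonal nodes, while the linear system of $(2,2)$ forms singular at $O_4$ is of large dimension, so $V^{ram}$ is not singled out by that data alone. One therefore has to carry out the explicit Jacobian (equivalently, pullback-is-a-square) computation, using the full symmetry group and the remaining tangential singularities $o_{ij}$ to force the coefficients to be those of \eqref{V_{ram}}. Once $R = V^{ram}$ is established in this way, the statement that $\phi(V^{ram})$ is the Igusa quartic follows formally from the preceding theorem, as explained in the first paragraph.
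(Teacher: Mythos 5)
Your proposal follows essentially the same route as the paper: the paper likewise takes the quoted theorem from \cite{CKS} (the branch divisor of $\phi$ is the Igusa quartic $\Delta^*$) as given, and then settles $V^{ram}=R$ by a direct computation with the explicit equations of $\phi$ and of \ref{V_{ram}} --- exactly the Jacobian, equivalently pullback-is-a-square, verification your argument reduces to, with $\phi(V^{ram})=\Delta^*$ then formal. Your adjunction computation on the blow-up, giving $R\sim\pi^*\mathcal O(2,2)-2\sum E_i$, is correct extra scaffolding the paper omits, but the crux is the same explicit check in both cases.
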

 
\section{Del Pezzo 5-tuples of planes and the Segre primal}
In what follows $\mathbb G_{\mathbb P^4}$ is the Grassmannian of planes of $\mathbb P^4$ embedded by its Pl\"ucker map, then $\deg \mathbb G_{\mathbb P^4} = 5$. Let
us consider \it any \rm transversal $0$-dimensional linear section \begin{equation}
\label {h}  h :=  \lbrace h_1 \dots h_5 \rbrace \subset \mathbb G_{\mathbb P^4},
 \end{equation}
 then $h$ spans a $3$-space. It is  known that its points are in general position in $ \langle h \rangle$.
\begin{defi} \it We say that $h$ is a Del Pezzo 5-tuple of planes of $\mathbb P^4$. \rm \end{defi}
All Del Pezzo $5$-tuples are projectively equivalent. So it is not restrictive fixing a Del Pezzo $5$-tuple of special geometric interest as follows.
Let $Y \subset \mathbb P^5$ be a smooth quintic Del Pezzo surface and $\mathcal I_Y$ its ideal sheaf. Then $\vert \mathcal I_Y(2) \vert$ is a $4$-space endowed with a natural Del Pezzo $5$-tuple: see lemma \ref{crucial}. We restart from $Y$ assuming that $\mathbb P^4$ is 
\begin{equation}
\label{DP I_S} \mathbb H := \vert \mathcal I_Y(2) \vert
\end{equation}
and $h$ is the Del Pezzo $5$-tuple considered in lemma \ref{crucial}. More precisely  $\mathbb H$ is a $4$-space of quadrics of $\mathbb P^5$ and the locus of its quadrics of rank $\leq 4$ is the union
of five nets of quadrics. These planes of $\mathbb H$ are the elements of $h$. From now on we fix the notation   \begin{equation} \label {G_H} \mathbb G_{\mathbb H} \  \text{ and} \ \mathbb G_{{\mathbb H}^*} \end{equation} respectively for the Grassmannians of planes and of lines of $\mathbb H$ in their Pl\"ucker spaces. 
At first we want to describe the discriminant sextic hypersurface of $\mathbb H$, that is the scheme of the singular quadrics $Q \in \mathbb H$. Omitting the most standard steps, let us summarize this description as follows. Consider the correspondence
$$
\tilde \Delta := \lbrace (z, Q) \in Y \times \mathbb H \ \vert \ z \in \Sing Q \rbrace,
$$
together with its natural projection maps
$$
\begin{CD}
{Y}Ê@<{q_1}<< {\tilde \Delta } @>{q_2}>> {\mathbb H} \\
\end{CD}
$$
and notice that $q_1: \tilde \Delta \to Y$ is a $\mathbb P^1$-bundle. Indeed, any projection $\pi_z: Y \to \mathbb P^4$ from a point $z \in Y$ defines an integral complete intersection of two quadric hypersurfaces
\begin{equation}
\label {Y_u} Y_z := \pi_z(Y) \subset \mathbb P^4.
\end{equation}
The pencil of quadrics through $Y_z$ pulls back to a pencil  of quadrics
 \begin{equation}
 L_z   \subset \mathbb H \label{pencil}
  \end{equation}
singular at $z$. It turns out that $q_1$ is a $\mathbb P^1$-bundle such that $q_1^*(z) =  \lbrace z \rbrace \times L_z$.
  \begin{lemm}   Assume $Q \in \mathbb H$ is singular, then $\Sing Q \cap Y \neq \emptyset$.  \end{lemm}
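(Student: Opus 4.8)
The plan is to exploit the incidence correspondence $\tilde\Delta$ already introduced and to prove that its second projection $q_2$ is \emph{surjective} onto the discriminant hypersurface $\Delta\subset\mathbb H$ of singular quadrics; since a point of the image is by construction a quadric singular at some point of $Y$, surjectivity is exactly the assertion $\Sing Q\cap Y\neq\emptyset$ for every singular $Q$. First I would record that $\tilde\Delta$ is irreducible of dimension $3$: this is immediate from the stated fact that $q_1\colon\tilde\Delta\to Y$ is a $\mathbb P^1$-bundle over the irreducible surface $Y$. Consequently $W:=q_2(\tilde\Delta)$ is a closed (image of a proper morphism) irreducible subset of $\mathbb H$, and trivially $W\subseteq\Delta$.

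The second step is to show $\dim W=3$, equivalently that $q_2$ is generically finite. For this I would exhibit one quadric of $\mathbb H$ that is singular at a single point of $Y$. Fix $z\in Y$ and use the description preceding the statement: $\pi_z$ maps $Y$ to the complete intersection of two quadrics $Y_z\subset\mathbb P^4$, and $L_z$ is the pull-back of the pencil of quadrics through $Y_z$. A general member $\bar Q$ of that pencil is a smooth quadric of $\mathbb P^4$, so its cone $Q$ with vertex $z$ is a rank-$5$ quadric lying in $L_z\subset\mathbb H$ whose singular locus is the single point $z\in Y$. Then the fibre $q_2^{-1}(Q)=\Sing Q\cap Y=\{z\}$ is a reduced point, so by upper semicontinuity of fibre dimension the general fibre of $q_2$ is finite and $\dim W=\dim\tilde\Delta=3$; in fact $q_2$ is birational onto $W$.

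The final step is to conclude $W=\Delta$ as reduced sets, after which every singular $Q\in\mathbb H$, being a point of $\Delta=W$, has a singular point on $Y$; note that the quadrics of rank $\le 4$ are then automatically covered by the closedness of $W$, without a separate argument. At this point $W$ is an irreducible threefold contained in the discriminant hypersurface $\Delta\subset\mathbb H\cong\mathbb P^4$, and the equality $W=\Delta$ follows once $\Delta$ is known to be irreducible, or, avoiding that, once one shows that no singular quadric of $\mathbb H$ is singular only away from $Y$.

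The main obstacle is precisely this last inclusion $\Delta\subseteq W$: a priori $\Delta$ could contain a second three-dimensional component whose general member is singular only off $Y$. I expect to rule this out through the determinantal description of $B:=\{z\in\mathbb P^5 : \exists\,Q\in\mathbb H,\ z\in\Sing Q\}$, the degeneracy locus where the $6\times 5$ Jacobian matrix of a basis of $\mathbb H$ drops rank. The key computation is to verify $B=Y$ set-theoretically, equivalently that $h^0(\mathcal I_{Y_z}(2))=0$ for $z\notin Y$, i.e.\ that the external projection $\pi_z(Y)\subset\mathbb P^4$ lies on no quadric. This forces the vertex of every corank-$1$ quadric of $\Delta$ to lie on $Y$, hence $\Delta\subseteq W$, completing the argument.
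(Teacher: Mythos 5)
Your steps (1) and (2) are sound (and consistent with the paper's setup: irreducibility of $\tilde\Delta$ from the $\mathbb P^1$-bundle $q_1$, and generic finiteness of $q_2$ via a rank-$5$ cone with vertex at a general $z\in Y$), but the entire content of the lemma sits in your third step, and there the argument breaks: the claimed equality $B=Y$ is \emph{false}. The degeneracy locus $B$ of your $6\times 5$ Jacobian matrix has expected codimension $(6-4)(5-4)=2$ in $\mathbb P^5$, so it is expected to be a threefold — and it is one: $B$ contains the five Segre scrolls $\Sigma_1,\dots,\Sigma_5\cong\mathbb P^1\times\mathbb P^2$ through $Y$. Indeed, each net $P_i=\vert\mathcal I_{\Sigma_i}(2)\vert$ consists of quadrics of rank $\leq 4$, its general member being singular along a whole line $\mathbb P^1\times\lbrace t\rbrace$, and these lines sweep out $\Sigma_i$; equivalently, for $z\in\Sigma_i - Y$ the projection $\pi_z(\Sigma_i)$ is a quadric hypersurface of $\mathbb P^4$ containing $\pi_z(Y)$, so $h^0(\mathcal I_{\pi_z(Y)}(2))\neq 0$. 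Thus your ``key computation'' fails on a three-dimensional set of points $z\notin Y$. Note that the lemma itself is perfectly compatible with $B\supsetneq Y$: since $Y\in\vert\mathcal O_{\Sigma_i}(1,2)\vert$, every singular line $\mathbb P^1\times\lbrace t\rbrace$ meets $Y$ in exactly one point, so the rank-$4$ quadrics do satisfy $\Sing Q\cap Y\neq\emptyset$ — but their vertices are mostly off $Y$, which is precisely what your deduction ``$B=Y$ forces every vertex onto $Y$'' cannot accommodate. Your fallback route, deducing $W=\Delta$ from irreducibility of the discriminant, is circular in the context of the paper: there, irreducibility of the support of the discriminant is a \emph{consequence} of this lemma, and the later theorem that the sextic discriminant equals $2\Delta$ (so its reduction is the Segre cubic) shows that degree and dimension bookkeeping alone can never exclude extra components.

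The paper's own proof is a short direct argument that bypasses the incidence variety entirely: if $\Sing Q\cap Y=\emptyset$, the projection from the linear space $\Sing Q$ restricts to a finite morphism $f\colon Y\to\overline Q\subset\mathbb P^r$, $r\leq 4$, where $\overline Q$ is the smooth quadric onto which the cone $Q$ projects; then $f(Y)$ is a surface with $5=\deg f\cdot\deg f(Y)$, forcing $r\geq 3$, while for $r=4$ one has $\Pic\overline Q=\mathbb Z$ so every surface in $\overline Q$ has even degree, and for $r=3$ the image is $\overline Q$ itself of degree $2$ — a contradiction in every case. If you want to salvage your strategy, you would have to replace $B=Y$ by the correct statement (essentially $B=\Sigma_1\cup\dots\cup\Sigma_5$) and then argue separately that every quadric of $\mathbb H$ singular at a point of $\Sigma_i - Y$ is singular along a full ruling line, hence meets $Y$; but at that point you are doing case analysis comparable in length to, and less clean than, the projection argument above.
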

\begin{proof} The projection from $\Sing Q$ defines a rational map $f: Y \to \mathbb P^r$ so that $f(Y) \subset \overline Q$, where $\overline Q$ is a smooth quadric and $r \leq 4$. Let
$\Sing Q \cap Y = \emptyset$ then $f$ is a morphism. But then $f(Y)$ is a quintic surface in $\overline Q$, which is impossible. Hence $\Sing Q \cap Y \neq \emptyset$.  \end{proof}
The lemma implies the irreducibility of the closed set
\begin{equation}
\label{Delta} \Delta := q_2(\tilde \Delta).
\end{equation}
Since a general $Q \in \mathbb H$ is smooth then $\Delta$ is a hypersurface and the support of the sextic discriminant of $\mathbb H$. The name of $\Delta$ is well known, see \cite[8.5]{Do}.
Before of coming to it we recall more on its geometry, which is determined by $Y$.    
The surface $Y$ has  exactly five pencils of conics. Each of these defines a distinct Segre embedding of $\mathbb P^1 \times \mathbb P^2$ in $\mathbb P^5$, let us say \begin{equation} \label{Segre}  \Sigma_i \subset \mathbb P^5, \ i = 1 \dots 5. \end{equation} $\Sigma_i$ is union of the supporting planes of the conics of a pencil. As is well known \begin{equation} Y =  \Sigma_1 \cap \dots \cap \Sigma_5 \subset \mathbb P^5.
\end{equation} Let $\mathcal I_i$ be the ideal sheaf of $\Sigma_i$. Notice also that $ \vert \mathcal I_i(2) \vert $ is the net of quadrics 
\begin{equation}
P_i :=  \lbrace Q \in \mathbb{H} \ \vert \  \Sing Q \ \text { is a line} \  \mathbb P^1 \times \lbrace t \rbrace \subset  \mathbb P^1 \times \mathbb P^2 = \Sigma_i \rbrace. \end{equation}    Therefore $P_1 \dots P_5$ are planes in $\Delta$. Now consider in $\mathbb G_{\mathbb H}$ the corresponding set  
\begin{equation}
h = \lbrace h_1 \dots h_5 \rbrace \subset \mathbb G_{\mathbb H}
\end{equation}
of five points and, in the Pl\"ucker space of $\mathbb G_{\mathbb H}$,  the hyperplane $H_i$ such that
$$
H_i \cap \mathbb G_{\mathbb H^*} = \lbrace L \in \mathbb G_{\mathbb H^*} \ \vert \ L \cap P_i \neq \emptyset \rbrace.
$$
We observe that the previous $\mathbb P^1$-bundle $q_1: \tilde \Delta \to Y$ defines a morphism
\begin{equation}
\iota: Y \to \mathbb G_{\mathbb H^*}
\end{equation}
sending $z$ to the parameter point of the pencil $L_z$. We point out the following:
\begin{lemm}  $P_i \cap L_z$ is not empty, so that $\iota(Y) \subset H_1 \cap \dots \cap H_5$. \end{lemm}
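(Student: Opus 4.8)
The second assertion follows formally from the first, so the plan is to focus entirely on the nonemptiness $P_i \cap L_z \neq \emptyset$. Indeed, by construction $\iota(z)$ is the Pl\"ucker point $[L_z] \in \mathbb{G}_{\mathbb{H}^*}$, while $H_i$ was defined precisely by $H_i \cap \mathbb{G}_{\mathbb{H}^*} = \lbrace L \in \mathbb{G}_{\mathbb{H}^*} \mid L \cap P_i \neq \emptyset \rbrace$. Hence, once I know $L_z \cap P_i \neq \emptyset$ for every $z \in Y$ and every $i \in \lbrace 1, \dots, 5 \rbrace$, I get $\iota(z) \in H_i$ for all $i$, so $\iota(z) \in H_1 \cap \dots \cap H_5$; as $z$ is arbitrary this is exactly $\iota(Y) \subset H_1 \cap \dots \cap H_5$.

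To prove the nonemptiness I would exhibit an explicit common quadric. Fix $i$ and $z \in Y$. Since $Y \subset \Sigma_i$, the point $z$ lies on the Segre threefold $\Sigma_i = \mathbb{P}^1 \times \mathbb{P}^2$, hence on a unique ruling line $\ell := \mathbb{P}^1 \times \lbrace t_0 \rbrace$, where $t_0 = \operatorname{pr}_2(z)$. I would then produce a quadric $Q \in P_i$ whose singular locus is exactly $\ell$. Granting such a $Q$, the lemma is immediate: $z \in \ell = \Sing Q$ shows that $Q$ is singular at $z$, so $Q \in L_z$ by the very definition of the pencil $L_z$; and $Q \in P_i$ by construction; therefore $Q \in L_z \cap P_i$.

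The heart of the matter is thus the claim that the assignment $Q \mapsto \Sing Q$ realizes \emph{every} ruling line of $\Sigma_i$, equivalently that the natural map from $P_i$ to the $\mathbb{P}^2$ of ruling lines (the second factor of $\Sigma_i$) is surjective. Here is where I would carry out the only computation. Recall that $P_i = \vert \mathcal{I}_i(2) \vert$ is the net of quadrics through $\Sigma_i$, generated by the three $2 \times 2$ minors $m_1, m_2, m_3$ of the $2 \times 3$ matrix of Segre coordinates. A direct inspection shows that the symmetric matrix of $\lambda_1 m_1 + \lambda_2 m_2 + \lambda_3 m_3$ has the block form $\left(\begin{smallmatrix} 0 & B \\ B^{t} & 0 \end{smallmatrix}\right)$, where $B$ is a $3 \times 3$ \emph{antisymmetric} matrix depending linearly on $(\lambda_1, \lambda_2, \lambda_3)$. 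Since a nonzero antisymmetric $3 \times 3$ matrix has rank exactly $2$, every member of the net has rank $4$, and its kernel is the ruling line indexed by $\ker B \subset \mathbb{P}^2$. The resulting map $P_i \to \mathbb{P}^2$, $[\lambda] \mapsto \ker B$, is then a linear isomorphism, in particular surjective, which furnishes for the specific line $\ell$ above the desired quadric $Q$ with $\Sing Q = \ell$.

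I expect the surjectivity above to be the main (and essentially the sole) obstacle, and the block computation settles it cleanly. If one prefers to avoid coordinates, the same conclusion can be reached by a degeneration argument that I would set up as follows. Because $Y$ is a divisor of bidegree $(1,2)$ in $\Sigma_i$, one has $[Y] \cdot \ell = 1$, so each ruling line meets $Y$ in a single point; this makes $\phi \colon P_i \to Y$, $Q \mapsto \Sing Q \cap Y$, a morphism of surfaces. Were $\phi$ non-surjective, a general fibre over some $z_0 \in Y$ would be positive-dimensional and contained in $P_i \cap L_{z_0}$, forcing the line $L_{z_0}$ into the plane $P_i$; but $L_{z_0}$ contains rank $5$ quadrics whose singular locus is the single point $z_0$ rather than a line, a contradiction. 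Either route establishes $P_i \cap L_z \neq \emptyset$ and hence the lemma.
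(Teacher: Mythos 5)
Your proof is correct, but you construct the key quadric differently from the paper. Both arguments share the same (essentially forced) skeleton: reduce the lemma to exhibiting a single quadric $Q \in P_i \cap L_z$, using that $H_i$ cuts $\mathbb{G}_{\mathbb{H}^*}$ exactly along the lines meeting $P_i$. The paper then produces $Q$ in one intrinsic stroke: it projects $\Sigma_i$ from the point $z \in \Sigma_i$, notes that the image $\overline{Q}_i = \pi_z(\Sigma_i) \subset \mathbb{P}^4$ is a quadric (degree drops from $3$ to $2$ under projection from a point of the variety), and takes $Q_i = \pi_z^* \overline{Q}_i$, a quadric cone with vertex at $z$ containing $\Sigma_i$ --- hence lying in $P_i \cap L_z$ at once. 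You instead compute the entire net $P_i = \vert \mathcal{I}_i(2) \vert$ in Segre coordinates, observe the block form $\left(\begin{smallmatrix} 0 & B \\ B^{t} & 0 \end{smallmatrix}\right)$ with $B$ antisymmetric, and conclude that every member has rank $4$ with singular locus the ruling line indexed by $\ker B$, the vertex map $P_i \to \mathbb{P}^2$ being a linear isomorphism; then you take the quadric whose singular line is the ruling line through $z$. Your computation is accurate (I verified the kernel of the $6 \times 6$ matrix is spanned by $(w,0)$ and $(0,w)$ with $w = \ker B$, so the vertex is indeed the ruling line $\mathbb{P}^1 \times \{[\lambda]\}$), and it buys something the paper only asserts: it proves the description of $P_i$ as the locus of quadrics singular along ruling lines, stated without proof in section 5. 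The paper's route is shorter and coordinate-free but yields only one quadric per $z$. Two minor points: your appeal to ``the very definition of $L_z$'' uses the identification $L_z = \lbrace Q \in \mathbb{H} \mid z \in \Sing Q \rbrace$, whereas the paper defines $L_z$ as the pull-back of the pencil through $Y_z$; the two agree (any $Q \in \mathbb{H}$ singular at $z$ is a cone with vertex $z$ over a quadric containing $Y_z$, and this is implicit in the paper's statement $q_1^*(z) = \lbrace z \rbrace \times L_z$), but it deserves a line. Your fallback degeneration argument also quietly assumes that a general member of $L_{z_0}$ has rank $5$, which is true but needs justification (e.g.\ a general quadric through the integral complete intersection $Y_{z_0}$ is smooth); since your main block computation is self-contained, nothing hinges on it.
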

\begin{proof} Let $\pi_z: \Sigma_i \to \mathbb P^4$ be the projection from $z$. Since $z \in \Sigma_i$ and $\Sigma_i$ is smooth of degree $3$ then $\overline Q_i := \pi_z(\Sigma_i)$ is a quadric in $\mathbb P^4$. Let $Q_i$ be its pull-back by $\pi_z$, then $Q_i$ is singular at $z$ and contains $\Sigma_i$. Hence $Q_i \in P_i \cap L_z$ and $\iota(z) \in H_1 \cap \dots \cap H_5$. \end{proof}
Let $\langle h \rangle^{\perp}$ be the orthogonal of the linear span $\langle h \rangle$ in the Pl\"ucker space of $\mathbb G_{\mathbb H^*}$, then
\begin{equation}
\label{i(Y)} \langle h \rangle^{\perp} = H_1 \cap \dots \cap H_5.
\end{equation}
Then we can consider $\iota$ as a morphism $\iota: Y \to \langle h \rangle^{\perp}$ with image in $\langle h \rangle^{\perp} \cdot \mathbb G_{\mathbb H^*}$.
 The next statement, essentially well known, will be also useful in the next sections.   
\begin{prop} \label{crucial} $\iota: Y \to \langle h \rangle^{\perp}$ is the anticanonical embedding of $Y$ and $h$ is a Del Pezzo $5$-tuple of $\mathbb H$.
Moreover $\iota(Y)$ is a linear section  of the Grassmannian $\mathbb G_{\mathbb H^*}$. \end{prop}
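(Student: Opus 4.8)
The plan is to exploit the Plücker duality between the two Grassmannians attached to $\mathbb H$. Write $\mathbb H = \mathbb P(V)$ with $\dim V = 5$, so that $\mathbb G_{\mathbb H} = \mathbb G(3,V)$ and $\mathbb G_{\mathbb H^*} = \mathbb G(2,V)$ are varieties of dimension $6$ and degree $5$ sitting in the dual Plücker spaces $\mathbb P(\wedge^3 V)$ and $\mathbb P(\wedge^2 V) = \mathbb P^9$, the pairing being $\wedge^2 V \times \wedge^3 V \to \wedge^5 V \cong \mathbb C$. First I would read off the Schubert hyperplanes through this pairing, then prove that $\iota$ is an isomorphism onto the linear section $\langle h\rangle^\perp \cdot \mathbb G_{\mathbb H^*}$, and finally transfer the conclusion to $h$ by duality.

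I would begin by recording that a line $L = \mathbb P(\hat L)$ meets the plane $P_i = \mathbb P(\hat P_i)$ exactly when $\hat L \wedge \omega_i = 0$, where $\omega_i \in \wedge^3 V$ is a Plücker vector of $h_i$; since $\dim \hat L + \dim \hat P_i = \dim V$, this is a single linear condition on the Plücker coordinates of $L$. Hence $H_i$ is precisely the hyperplane of $\mathbb P(\wedge^2 V)$ annihilated by $h_i$, so that $\bigcap_i H_i = \langle h\rangle^\perp$; this re-proves \ref{i(Y)} and shows that $\dim\langle h\rangle^\perp = 8 - \dim\langle h\rangle$ inside $\mathbb P^9$.

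Next I would show that $\iota$ is a closed embedding. It is a morphism because the pencils $L_z$ are the fibres of the $\mathbb P^1$-bundle $q_1\colon \tilde\Delta \to Y$, so $z \mapsto [L_z]$ varies algebraically. For injectivity I would suppose $L_z = L_{z'}$ with $z \neq z'$: then every quadric of this pencil is singular at $z$ and at $z'$, hence along the whole line $\ell = \overline{zz'}$, and each is a cone pulled back from a quadric of $\mathbb P^3 = \mathbb P(W/\hat\ell)$; projecting $Y$ from $\ell$ would then force the nondegenerate quintic surface $Y$ into the intersection of two quadrics of $\mathbb P^3$, which a dimension-and-degree count rules out. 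A local computation of $d\iota$ in the same spirit shows $\iota$ is unramified, so $\iota$ is an isomorphism of $Y$ onto a smooth irreducible surface $\iota(Y) \subset \langle h\rangle^\perp \cdot \mathbb G_{\mathbb H^*}$.

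Finally I would identify $\iota(Y)$ with the full section and transfer to $h$. The general position of the five nets $P_i$, which is classical (\cite[8.5]{Do}), gives that $h_1,\dots,h_5$ span a $\mathbb P^3$ and cut $\mathbb G_{\mathbb H}$ transversally; this is exactly the assertion that $h$ is a Del Pezzo $5$-tuple. Dually it yields $\langle h\rangle^\perp = \mathbb P^5$ and a transversal codimension-$4$ section $\langle h\rangle^\perp \cdot \mathbb G_{\mathbb H^*}$, which is the classical realization of the quintic Del Pezzo surface in $\mathbb G(2,5)$: an irreducible surface of degree $5$ embedded by the Plücker, hence hyperplane, class. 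Since $\iota(Y)$ is an irreducible surface contained in it, the two coincide, proving that $\iota(Y)$ is a linear section; and as $\iota\colon Y \to \iota(Y)$ is then an isomorphism of smooth quintic Del Pezzo surfaces, $\iota^*\mathcal O(1) = -K_Y$, so $\iota$ is the anticanonical embedding. I expect the main obstacle to be this last identification: verifying that $\iota$ is genuinely injective and unramified, and that the $\mathbb P^5 = \langle h\rangle^\perp$ meets $\mathbb G_{\mathbb H^*}$ transversally in an \emph{irreducible} surface equal to $\iota(Y)$ rather than a larger or reducible locus. This is precisely where the general position of the rank-$\le 4$ nets $P_i$ and the self-association (Gale duality) of the two Grassmannian sections must be used in an essential way.
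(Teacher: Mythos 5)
Your proposal is correct in outline but takes a genuinely different route from the paper's. The paper proves everything at once by a bundle construction: restricting the Euler sequence of $\mathbb P^5$ to $Y$ and dualizing gives a monomorphism $\upsilon: \mathcal S^*_{\vert Y} \to H^0(\mathcal O_{\mathbb P^5}(2)) \otimes \mathcal O_Y$, with $\mathcal S = \Sym^2 T_{\mathbb P^5}(-1)$, whose fibre at $z$ is the space of quadratic forms singular at $z$; intersecting with $H^0(\mathcal I_Y(2))$ via a Cartesian square produces exactly the rank-two bundle $\mathcal U$ with $\mathbb P(\mathcal U) = \tilde\Delta$, identified with the pull-back of the universal bundle of $\mathbb G_{\mathbb H^*}$, and the statement then falls out of the well-known properties of $\mathcal U$ on the quintic Del Pezzo ($\mathcal O_{\tilde\Delta}(1)$ is anticanonical, and the map defined by $\mathcal U$ is the anticanonical embedding realized as a linear section). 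You instead argue pointwise: your duality computation identifying $H_i$ with the hyperplane $h_i^{\perp}$ is right and re-proves \ref{i(Y)}; your injectivity argument (a pencil singular at two points is singular along the joining line, so the quadrics are cones over a pencil in $\mathbb P^3$, forcing the projected surface $\pi_\ell(Y)$ into two distinct quadrics of $\mathbb P^3$, which cannot share a two-dimensional component without producing a rank $\leq 2$ quadric through the nondegenerate $Y$) is sound, modulo checking that $\pi_\ell(Y)$ really is a surface even when $\ell$ is one of the ten lines of $Y$; and your endgame via the classical realization of transversal $\mathbb P^5$-sections of $G(2,5)$ as quintic Del Pezzo surfaces, plus adjunction $K_S = (K_{\mathbb G} + 4H)_{\vert S} = -H_{\vert S}$, closes the argument. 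Two remarks on your flagged obstacles: the unstated computation of $d\iota$ can be avoided entirely, since once $\iota$ is injective and the section is a smooth, hence normal, surface, Zariski's main theorem upgrades the injective morphism to an isomorphism; and the dualization step you single out -- passing from the transversal five-point section $\langle h \rangle \cdot \mathbb G_{\mathbb H}$ to smoothness and irreducibility of $\langle h \rangle^{\perp} \cdot \mathbb G_{\mathbb H^*}$ -- is indeed the crux, is not automatic for projectively dual pairs, and is exactly the classical self-association statement for quintic Del Pezzo surfaces you cite from \cite[8.5]{Do}. The paper's bundle construction is precisely the device that sidesteps this issue, since the embedding and its image as a linear section come out of $\mathcal U$ directly; what your approach buys in exchange is an elementary argument with explicit geometric content, at the cost of leaning on the classical general-position correspondence at the decisive moment, which is comparable to the paper's own reliance on the ``well known'' properties of $\mathcal U$.
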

\begin{proof} Consider the Euler sequence of $\mathbb P^5$ restricted to $Y$
$$
0 \to \mathcal O_Y(-1) \to H^0(\mathcal O_{\mathbb P^5}(1))^* \otimes \mathcal O_Y \to T_{\mathbb P^5 \vert Y}(-1) \to0.
$$
Then its dual defines a monomorphism $\upsilon: \mathcal S^*_{\vert Y} \to H^0(\mathcal O_{\mathbb P^5}(2)) \otimes \mathcal O_Y$, where we have put $\mathcal S := \Sym^2 T_{\mathbb P^5}(-1)$.
Let $z \in Y$, then $\mathcal S^*_{\vert Y,z}$ is the vector space of quadratic forms singular at $z$ and $\upsilon_z: \mathcal S^*_{\vert Y,z} \to H^0(\mathcal O_{\mathbb P^5}(2))$ is the inclusion map.
Now let $\mathcal U$ be the pull-back by $\iota$ of the universal bundle of $\mathbb G_{\mathbb H^*}$, observe that $\mathbb P(\mathcal U) = \tilde \Delta$ and that  
$$
\begin{CD}
{\mathbb P(\mathcal U)} @>{q_2}>> {\vert \mathcal I_Y(2) \vert \times Y} \\
@VVV @VVV \\
{\mathbb P(\mathcal S^*_{\vert Y})} @>{\upsilon}>> {\vert \mathcal O_{\mathbb P^5}(2) \vert \times Y} \\
\end{CD}
$$
is a Cartesian square where the vertical maps are inclusions. The properties of the rank two vector bundle $\mathcal U$ are well known: $\mathcal O_{\tilde \Delta}(1)$ is the anticanonical sheaf.
Moreover the map $\iota: Y \to \mathbb G_{\mathbb H^*}$, defined by $\mathcal U$, is the anticanonical embedding of $Y$, followed by the inclusion $\iota(Y) \subset \mathbb G_{\mathbb H^*}$ as a linear
section. This implies the statement. \end{proof}  
\begin{defi} \it \label{Y_h} The Del Pezzo surface defined by $h$ is $Y_h := \langle h \rangle^{\perp} \cdot \mathbb G_{\mathbb H^*}.$
\end{defi}
Finally we go back to the hypersurface $\Delta$. 
 \begin{thm} \label{Segre}  $\Delta$ is the Segre primal and $2\Delta$ is the sextic discriminant of $\mathbb H$. \end{thm}
\begin{proof} In the Chow ring of the Grassmannian of lines of a $4$-space a $2$-dimensional linear section has class $(2,3)$. Since $q_2: \tilde \Delta \to \Delta$ is a birational morphism
it follows $\deg \Delta = 3$. Since $Y_h$ is smooth, it is well known that $\Delta$ is the Segre cubic primal. \end{proof}
\begin{rem} As remarked the planes $P_1 \dots P_5$ are in $\Delta$. It is easy to see that a unique quadric $Q_{ij}$  satisfies $P_i \cap P_j = \lbrace Q_{ij} \rbrace$,  $i < j$. This implies that $Q_{ij} \in \Sing \Delta$ and describes the ten singular points of $\Delta$. Notice also that $\Sing Q_{ij}$ is one of the ten lines in
$Y$ and that the obvious map $ \lbrace Q_{ij}, \  i < j  \rbrace \to \lbrace \text{lines of $Y$} \rbrace$ is bijective.
\end{rem}
\begin{rem} The previous statement has somehow a classical flavor, however we are not aware of any reference for it. We thank Igor Dolgachev for his useful comments. \end{rem}

 \section{Morin-Del Pezzo configurations}
Now we use $Y$ and the natural Del Pezzo $5$-tuple of planes $P_1 \dots P_5 \subset \mathbb H$ to describe an interesting family of special Morin configurations. We fix a linear embedding
\begin{equation} \mathbb H \subset \mathbb P(W), \end{equation}
the choice of it is irrelevant up to $\Aut \mathbb P(W)$.  We fix the notation $ W_Y := H^0(\mathcal I_Y(2))$  so that it follows 
$ \mathbb P(\wedge^3 W_Y) \subset \mathbb P(\wedge^3 W)$ and  $\mathbb G_{\mathbb H} \subset \mathbb G$. We will also assume that
 \begin{equation} u \in \lbrace h_1 \dots h_5 \rbrace = h. \end{equation}
   
\begin{defi} \label {DP marked} \it A subspace $A \subset \wedge^3 W$ is Del Pezzo marked if $\langle h \rangle \subset\mathbb P( A)$. \end{defi}
The space $\wedge^3 W_Y$ is obviously isotropic. We recall that a Morin configuration $F \subset \mathbb G$ is, by definition, a configuration of incident planes which is finite and complete. As we know, this is equivalent to say that $F$ is finite
and, moreover, that there exists a maximal isotropic space $A \in LG(10, \wedge^3 W)$ such that $F = \mathbb P(A) \cdot \mathbb G$ and $\langle F \rangle = \mathbb P(A)$.
\begin{defi} \it \label{DP}Let $F$ be a Morin configuration: we say that $F$ is a Morin-Del Pezzo configuration if $F$ contains $h$ and $\langle h \rangle \cdot \mathbb G = h$. \end{defi}
Let us point out that $h \subset F$ implies $A \cap \wedge^3 W_Y = [h]$, that is,
\begin{equation}
\label{[h] cap A}  \langle h \rangle = \mathbb P(A) \cap \mathbb P(\wedge^3 W_Y).
\end{equation}
This follows because, counting dimensions, the intersection $L \cap \mathbb G_{\mathbb H}$ is not finite for any space $L \subset \mathbb P(\wedge^3 W_Y)$ which contains $\langle h \rangle$ properly.  
Let
\begin{equation}
\label {F'} F' := \mathbb P(A) \cdot  (\mathbb P(\wedge^3W) - \mathbb P(\wedge^3W_Y)),
\end{equation}
then the condition $\langle h \rangle \cdot \mathbb G = h$ is equivalent to say that
\begin{equation}
  F = F' \cup h.
\end{equation}
We fix the notation $F'$ for the subscheme of $F$ occurring in this decomposition.
\begin{rem}  In this part of the paper we describe Morin-Del Pezzo configurations and give a method for their explicit construction in any possible length.  As we will see, these configurations are strictly related
to the family of $V$-threefolds containing a plane and to the Severi variety of quadratic sections $C$ of $Y$ such that $\Sing C$ has length $\geq 6$. \par We stress however that our construction only gives
Morin configurations of special type. The reason is that $h$ spans a $3$-space. Since any Morin configuration spans a $9$-space, otherwise it is not complete, it follows that  the length of 
$F \cup h$ is \it at least \rm $11$, while a general configuration has length $10$. Nevertheless this construction recovers most families of Morin configurations for any length 
$k \in [11 , 20]$.  As we will see, the family of Morin-Del Pezzo configurations is irreducible and depend on $9$ moduli.
\end{rem}
\medskip  \par
To begin let us fix since now a vector $f \notin W_Y$ and the decomposition
\begin{equation}
W = F \oplus W_Y,
\end{equation}
where $F$ is generated by $f$. Moreover we fix the identification
$$ 
\wedge^2 W_Y = \lbrace f \wedge b, \ b \in \wedge^2 W \rbrace
$$
and the decomposition $ \wedge^3 W = \wedge^3 W_Y \oplus \wedge^2 W_Y$. So far we then have
\begin{equation}
\label {r} \wedge^3 (W_Y \oplus  F) = \wedge^3 W = \wedge^3 W_Y \oplus \wedge^2 W_Y.
\end{equation}
In particular any two vectors $v, v' \in \wedge^3 W$ are uniquely decomposed as $ v = a + f \wedge b $ and  $v' = a' + f \wedge b'$, where $a, a' \in \wedge^3 W_Y$ and $b, b' \in \wedge^2 W_Y$. Therefore we have
\begin{equation}
\label {w DP} w(v,v') = v \wedge v' = -f \wedge (a \wedge b' + a' \wedge b).
\end{equation}
Notice that $w$ is induced by the natural pairing $\wedge^3W_Y \times \wedge^2 W_Y \to \wedge^5 W_Y$, up to a non zero factor the choice of $f$ is irrelevant. The proof of the next lemma is immediate.
  \begin{lemm} The subspaces $\wedge^3 W_Y$ and $\wedge^2 W_Y$ are isotropic spaces of $w$. \end{lemm}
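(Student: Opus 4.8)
The plan is to read the lemma off directly from the decomposition $\wedge^3 W = \wedge^3 W_Y \oplus \wedge^2 W_Y$ of \ref{r} together with the explicit shape of the pairing recorded in \ref{w DP}. Recall that $\dim W_Y = 5$, so $\wedge^6 W_Y = 0$, and that the summand written $\wedge^2 W_Y$ really denotes its image $f \wedge \wedge^2 W_Y$ inside $\wedge^3 W$. I would handle the two subspaces separately, in each case substituting the appropriate vanishing into the formula $w(v,v') = -f\wedge(a\wedge b' + a'\wedge b)$.

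For $\wedge^3 W_Y$, take $v = a$ and $v' = a'$ with $a, a' \in \wedge^3 W_Y$; in the notation of \ref{w DP} this is the case $b = b' = 0$, and the formula gives $w(v,v') = 0$ immediately. (Equivalently, $v \wedge v' \in \wedge^6 W_Y = 0$ for dimension reasons.) For the summand $\wedge^2 W_Y$, take $v = f \wedge b$ and $v' = f \wedge b'$, i.e. $a = a' = 0$; again \ref{w DP} yields $w(v,v') = 0$. (Equivalently, $(f \wedge b) \wedge (f \wedge b')$ carries the repeated factor $f \wedge f = 0$.) Either route shows that the restriction of $w$ to each of the two subspaces vanishes identically, which is exactly the assertion that they are isotropic.

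Since the computation is forced by \ref{w DP}, there is no real obstacle; the only point deserving care is the sign bookkeeping that produced \ref{w DP}, namely verifying that the terms $a \wedge a'$ and $(f\wedge b)\wedge(f\wedge b')$ drop out and that only the mixed term $f\wedge(a\wedge b' + a'\wedge b)$ survives. This mixed term is nonzero in general, so $w$ pairs $\wedge^3 W_Y$ against $f \wedge \wedge^2 W_Y$ through the perfect pairing $\wedge^3 W_Y \times \wedge^2 W_Y \to \wedge^5 W_Y \cong \mathbb C$. Noting that $\dim \wedge^3 W_Y = \dim \wedge^2 W_Y = 10$ and that the two subspaces meet only in $0$, I would add the remark that they are in fact complementary Lagrangian subspaces of $(\wedge^3 W, w)$, a fact presumably exploited in the sequel.
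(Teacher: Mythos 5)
Your proof is correct and is exactly the argument the paper has in mind: the paper offers no written proof (it declares the lemma immediate), and the intended verification is precisely your substitution of $b=b'=0$ and $a=a'=0$ into the formula of \ref{w DP}, or equivalently the observations that $\wedge^6 W_Y = 0$ since $\dim W_Y = 5$ and that $(f\wedge b)\wedge(f\wedge b')$ has a repeated factor $f$. Your closing remark that the two summands are complementary Lagrangians paired perfectly through $\wedge^3 W_Y \times \wedge^2 W_Y \to \wedge^5 W_Y$ is also consistent with the paper, which states just below the lemma that $w$ is induced by this pairing.
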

Let $r: \wedge^3 W \to \wedge^2 W_Y$ be the map sending $a + f \wedge b$ to $b$, then $r$ has a geometric meaning. Indeed $r$ defines the projection of center $\mathbb P(\wedge^2 W_Y)$ 
 \begin{equation}\label {overline r} \overline r: \mathbb P(\wedge^3 W) \to \mathbb P(\wedge^2 W_Y). \end{equation}   
 Now let $\mathbb G_{\mathbb H^*} \subset \mathbb P(\wedge^2 W_Y)$ be the Grassmannian of lines of $\mathbb H$, then we have:
   \begin{lemm} Let $o \in \mathbb G$,  the assignement $o \to P_o \cap \mathbb H$ defines the rational map $$ \overline r \vert \mathbb G: \mathbb G \to \mathbb G_{\mathbb H^*} \subset \mathbb P(\wedge^2 W_Y).$$
  \end{lemm}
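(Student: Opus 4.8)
The plan is to compute $\overline r$ in Pl\"ucker coordinates using the adapted splitting $\wedge^3 W = \wedge^3 W_Y \oplus (f \wedge \wedge^2 W_Y)$ and to check directly that it returns the line $P_o \cap \mathbb H$. First I would take a general $o \in \mathbb G$, with associated $3$-dimensional subspace $O \subset W$, and note that since $\dim O = 3$, $\dim W_Y = 5$ and $\dim W = 6$, the intersection $O \cap W_Y$ has dimension $\geq 2$, with equality exactly when $O \not\subset W_Y$, that is when $P_o \not\subset \mathbb H$. On this open locus $\mathbb P(O \cap W_Y) = P_o \cap \mathbb H$ is a line of $\mathbb H$, hence a point of $\mathbb G_{\mathbb H^*}$.

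Next I would choose a basis of $O$ adapted to $W = F \oplus W_Y$: pick $v_1, v_2$ spanning $O \cap W_Y$ and complete to a basis $v_1, v_2, v_3$ of $O$, writing $v_3 = cf + w$ with $c \neq 0$ and $w \in W_Y$. Expanding the Pl\"ucker vector,
\[
v_1 \wedge v_2 \wedge v_3 = v_1 \wedge v_2 \wedge w + f \wedge (c\, v_1 \wedge v_2),
\]
the first summand lies in $\wedge^3 W_Y$ and the second is $f \wedge b$ with $b = c\, v_1 \wedge v_2 \in \wedge^2 W_Y$. By the definition of $r$ this gives $r(o) = c\, v_1 \wedge v_2$, so $\overline r(o) = [v_1 \wedge v_2]$. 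But $[v_1 \wedge v_2]$ is exactly the Pl\"ucker point of the $2$-dimensional space $O \cap W_Y \subset W_Y$, i.e.\ of the line $P_o \cap \mathbb H \subset \mathbb H = \mathbb P(W_Y)$; being decomposable, it lies on $\mathbb G_{\mathbb H^*}$. This identifies $\overline r|_{\mathbb G}$ with the assignment $o \mapsto P_o \cap \mathbb H$.

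Finally I would pin down the indeterminacy locus: $\overline r$ is undefined on its center $\mathbb P(\wedge^3 W_Y)$, and $\mathbb G \cap \mathbb P(\wedge^3 W_Y) = \mathbb G_{\mathbb H}$ is precisely the set of planes $P_o \subset \mathbb H$, on which $P_o \cap \mathbb H$ is no longer a line; this matches the expected base locus of the rational map and closes the argument. The computation is routine linear algebra, so there is no real obstacle; the only points demanding attention are the scalar and sign bookkeeping in the wedge expansion and the observation that decomposability of $v_1 \wedge v_2$ places the image on the Grassmannian $\mathbb G_{\mathbb H^*}$ rather than merely in the ambient $\mathbb P(\wedge^2 W_Y)$.
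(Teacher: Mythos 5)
Your proof is correct and takes essentially the same route as the paper's: both write a Pl\"ucker vector of $o$ as $v_1 \wedge v_2 \wedge (cf + w)$ with $v_1, v_2$ spanning $O \cap W_Y$ (the paper phrases this as $v = b \wedge f'$ with $b$ decomposable in $\wedge^2 W_Y$), expand along the splitting $\wedge^3 W = \wedge^3 W_Y \oplus (f \wedge \wedge^2 W_Y)$, and read off $r(v)$ as a nonzero multiple of $v_1 \wedge v_2$, the Pl\"ucker point of the line $P_o \cap \mathbb H$, the scalar and sign being projectively irrelevant. Your explicit identification of the indeterminacy locus as $\mathbb G \cap \mathbb P(\wedge^3 W_Y) = \mathbb G_{\mathbb H}$ and the dimension count for $O \cap W_Y$ are details the paper leaves implicit, but they do not change the argument.
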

   \begin{proof} Let $v \in \wedge^3 W - \wedge^3W_Y$ be decomposable and defining $o$. Then we have $v = b \wedge f'$ with $b$ decomposable in $\wedge^2 W_Y$ and $f' \in W - W_Y$. We can write $f'$ as $f' = kf + c$ with $c \in W_Y$. Then $v = a + kf \wedge b$ with $a = b \wedge c$. Hence $P_o \cap \mathbb H$ is the line defined by the vector 
$-kb = r(v)$ and the statement follows. \end{proof}
   \begin{rem} \label{resolution} In particular the fibre of $\overline r \vert \mathbb G$ at $\overline r(u)$ is the $\mathbb P^3$ of planes of $\mathbb I$ containing the line $P_o \cap \mathbb H$ and the next commutative diagram  solves the indeterminacy of $\overline r \vert \mathbb G$:
\begin{equation} 
\xymatrix{ & & & {\tilde{ \mathbb G} } \ar[dl]_{\gamma } \ar[dr]^{\tilde r} & \\
 & &   { \mathbb G}  \ar@{>}[rr]^{\overline r \vert \mathbb G} & &   {\mathbb G_{\mathbb H^*} }.  }
 \end{equation}
In it $\tilde {\mathbb G}$ is the correspondence  defined below and $\gamma$, $\tilde r$ are its projections.  $\tilde r$ is a $\mathbb P^3$-bundle. 
\begin{equation}
  \tilde {\mathbb G} := \lbrace (L, P) \in \mathbb G_{\mathbb H^*} \times \mathbb G \ \vert \ L \subset P \rbrace.
\end{equation} 
 \end{rem} 
Now we consider the family of all isotropic spaces $A$ in $\wedge^3 W$ which are marked by the Del Pezzo $5$-tuple  $h$, that is such that $ h = \lbrace h_1 \dots h_5 \rbrace \subset \mathbb P(A)$.
Let $i = 1 \dots 5$ and let $s_i \in \wedge^3W$ be a vector defining the point $h_i$, then we have the orthogonal space
 \begin{equation}
\label {H^ort} H^{ort} := \lbrace s_1 \dots s_5 \rbrace^{\perp} \subset \wedge^3 W.
\end{equation}
Since $s_1 \dots s_5$ generate a subspace of dimension $4$ it follows $\dim H^{ort} = 16$. Let
\begin{equation} H_Y := r(H^{ort}) \subset \wedge^2 W_Y,
\end{equation}
since $\Ker r = \wedge^3 W_Y$ we have the exact sequence of vector spaces
\begin{equation}
\label{H_Y} 0 \to \wedge^3 W_Y \to H^{ort} \stackrel{r} \to H_Y \to 0. 
\end{equation}
Let $\wedge^3 W_Y \times \wedge^2 W_Y \to \wedge^5 W_Y$ be the natural pairing. It follows from the geometric description of $r$ in \ref{overline r} that $H_Y$ is the orthogonal of  $\lbrace s_1 \dots s_5 \rbrace$ under such a pairing.  
Let $h \subset A$, where $A \subset \wedge^3 W$ is an \it isotropic \rm subspace, then we have $A \subset H^{ort}$ and $$r(A) \subseteq H_Y. $$ Furthermore, under the previous pairing, we have the  equality:
\begin{equation}
\label{equal} r(A) = (A \cap \wedge^3 W_Y)^{\perp}.   
\end{equation}
Then, since $H_Y$ is $6$-dimensional, the next lemma follows.
\begin{lemm} Let $A$ be maximal isotropic then $r(A) = H_Y$ iff $A \cap \wedge^3 W_Y = [h]$. \end{lemm}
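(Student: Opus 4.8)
The plan is to reduce the statement to a single dimension count, resting on the exact sequence \ref{H_Y} and on the inclusion $r(A) \subseteq H_Y$, which has already been recorded for isotropic $A$ containing $h$. Since by hypothesis $A$ is Del Pezzo marked, the representing vectors $s_1,\dots,s_5$ of $h_1,\dots,h_5$ lie in $A$; as they also lie in $\wedge^3 W_Y$, this yields the inclusion $[h] \subseteq A \cap \wedge^3 W_Y$, where $\dim [h] = 4$ because $h$ spans a $3$-space in $\mathbb P(\wedge^3 W_Y)$. These two facts, together with $\dim H_Y = 6$, are all that is needed.

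First I would compute $\dim r(A)$. Because $\Ker r = \wedge^3 W_Y$, the restriction $r_{\vert A}\colon A \to \wedge^2 W_Y$ has kernel exactly $A \cap \wedge^3 W_Y$, so $\dim r(A) = \dim A - \dim(A \cap \wedge^3 W_Y)$. Since $A$ is maximal isotropic in the $20$-dimensional symplectic space $\wedge^3 W$, we have $\dim A = 10$, whence $\dim r(A) = 10 - \dim(A \cap \wedge^3 W_Y)$.

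Now, because $r(A) \subseteq H_Y$ with $\dim H_Y = 6$, the equality $r(A) = H_Y$ is equivalent to $\dim r(A) = 6$, i.e.\ to $\dim(A \cap \wedge^3 W_Y) = 4$. Combining this with the inclusion $[h] \subseteq A \cap \wedge^3 W_Y$ and $\dim [h] = 4$, the equality $\dim(A \cap \wedge^3 W_Y) = 4$ forces $A \cap \wedge^3 W_Y = [h]$; conversely, $A \cap \wedge^3 W_Y = [h]$ gives $\dim(A \cap \wedge^3 W_Y) = 4$ and hence $r(A) = H_Y$. This closes the equivalence.

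As an alternative I could argue directly from \ref{equal}: since the pairing $\wedge^3 W_Y \times \wedge^2 W_Y \to \wedge^5 W_Y$ is perfect, taking orthogonals is an inclusion-reversing duality with $(\,\cdot\,)^{\perp\perp}$ the identity, and the relations $r(A) = (A \cap \wedge^3 W_Y)^{\perp}$ and $H_Y = [h]^{\perp}$ give $r(A) = H_Y \iff A \cap \wedge^3 W_Y = [h]$ upon applying $\perp$ once more. There is no serious obstacle here; the only point demanding care is the bookkeeping that guarantees $[h] \subseteq A \cap \wedge^3 W_Y$ (this is precisely the Del Pezzo marking hypothesis) and that $\dim [h] = 4$ rather than $5$, so that the numerical equivalence between $\dim(A \cap \wedge^3 W_Y) = 4$ and $A \cap \wedge^3 W_Y = [h]$ is legitimate.
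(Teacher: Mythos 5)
Your proof is correct and is essentially the paper's own argument: the paper dispatches this lemma in one line immediately after recording $r(A) \subseteq H_Y$, the identity $r(A) = (A \cap \wedge^3 W_Y)^{\perp}$ of \ref{equal}, and $\dim H_Y = 6$, which is precisely the rank--nullity count and perfect-pairing duality you spell out. Both your primary dimension argument (using $[h] \subseteq A \cap \wedge^3 W_Y$ with $\dim[h]=4$) and your alternative via orthogonals under the perfect pairing $\wedge^3 W_Y \times \wedge^2 W_Y \to \wedge^5 W_Y$ are sound and faithful to the intended derivation.
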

Let $H_1 \dots H_5 \subset \mathbb P(\wedge^2 W_Y)$ be the hyperplanes respectively defined by $h_1 \dots h_5$. As in \ref{Y_h},  $\mathbb P(H_Y)$ is the $5$-space spanned by the smooth Del Pezzo quintic surface
\begin{equation}
 Y_h = H_1 \cap \dots \cap H_5 \cap \mathbb G_{\mathbb H^*}.
\end{equation}
\par
Now assume that $F' := (\mathbb P(A) - \langle h \rangle)  \cdot \mathbb G$ is finite, then we have:
\begin{lemm} \label{embed} $\overline r$ restricted to $F'$ is an embedding. \end{lemm}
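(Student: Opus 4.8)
The plan is to exploit, exactly as in the earlier proof that $\tau_{\vert F'_A}$ is biregular to its image, that $\overline r$ restricted to $\mathbb P(A)$ is a linear projection; the only change is that here one projects from the $3$-plane $\langle h\rangle$ rather than from a single point. By \ref{[h] cap A} we have $\langle h\rangle=\mathbb P(A)\cap\mathbb P(\wedge^3W_Y)$, so $\overline r_{\vert\mathbb P(A)}$ is precisely the projection of the $9$-space $\mathbb P(A)$ from its center $\langle h\rangle$, with image the $5$-space $\mathbb P(H_Y)$ (using $r(A)=H_Y$). Since $F'$ is finite and, by its very definition, disjoint from $\mathbb P(\wedge^3W_Y)$ and hence from $\langle h\rangle$, the map $\overline r_{\vert F'}$ is a morphism, and proving it is an embedding amounts to showing that $\overline r$ collapses no length $2$ subscheme $\zeta\subset F'$, that is, that $\langle\zeta\rangle\cap\langle h\rangle=\emptyset$ for every such $\zeta$.

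First I would argue by contradiction: suppose some length $2$ subscheme $\zeta\subset F'$ is collapsed, so that $\overline r(\zeta)$ is a single point $L\in\mathbb G_{\mathbb H^*}$. The key geometric remark is the identification of the fibre of $\overline r_{\vert\mathbb G}$ over $L$ with the set of planes of $\mathbb P^5$ containing the line $L\subset\mathbb H$. Writing $L=[v_1\wedge v_2]$ with $v_1,v_2\in W_Y$, this set is $\mathbb P(v_1\wedge v_2\wedge W)$, a genuinely linear $\mathbb P^3$, all of whose points are decomposable and therefore lie on $\mathbb G$; call it $\mathbb P^3_L\subset\mathbb G$. Because $\zeta\subset\mathbb G$ maps to the single point $L$, one checks in both the reduced and the tangential case that $\zeta\subset\mathbb P^3_L$; in the tangential case this is because the collapsed tangent direction lies in $T_p(\mathbb P^3_L)$ and $\mathbb P^3_L$ is linear.

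Now comes the decisive step. Since $\zeta\subset\mathbb P^3_L$ and $\zeta\subset\mathbb P(A)$, we have $\zeta\subset\mathbb P^3_L\cap\mathbb P(A)$, and this intersection of two linear subspaces is again a linear subspace, hence reduced. A linear subspace can contain a length $2$ scheme only if it has dimension at least $1$: two distinct points already span a line, while a length $2$ subscheme supported at one point forces its tangent line into the space. Therefore $\mathbb P^3_L\cap\mathbb P(A)$ is positive dimensional, and since $\mathbb P^3_L\subset\mathbb G$ it is contained in $\mathbb G\cap\mathbb P(A)=F_A=F$. As $\zeta$ lies off the center, this positive dimensional piece is not swallowed by $\langle h\rangle$, so it contradicts the finiteness of $F'$ (equivalently of $F$); hence no collapsing $\zeta$ exists and $\overline r_{\vert F'}$ is an embedding.

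The main obstacle, and the only point requiring real care, is the identification of the fibre of $\overline r_{\vert\mathbb G}$ over $L$ with the \emph{linear} space $\mathbb P^3_L$: it is this linearity, together with the elementary fact that an intersection of linear spaces is reduced, that upgrades ``$\mathbb P^3_L\cap\mathbb P(A)$ contains a length $2$ scheme'' into ``$\mathbb P^3_L\cap\mathbb P(A)$ is positive dimensional''. This is the exact analogue of the step in the earlier lemma where a secant line through the center of a point projection was forced to lie inside $\mathbb G$; here the role of that secant line is played by the whole $\mathbb P^3_L$.
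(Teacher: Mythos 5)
Your proof is correct and follows essentially the same route as the paper: a collapsed length $2$ subscheme $\zeta\subset F'$ must lie in a fibre of $\overline r\vert\mathbb G$, which by remark \ref{resolution} is the linearly embedded $\mathbb P^3$ of planes through a fixed line of $\mathbb H$, so $\langle\zeta\rangle\subset\mathbb P(A)\cap\mathbb G$ yields a pencil of planes in $F$, contradicting finiteness. Your only addition is to spell out the tangential (non-reduced) case and the linearity of the fibre more explicitly than the paper does, which is a harmless refinement of the same argument.
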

\begin{proof} Let $\zeta \subset F'$ be a scheme of length $2$ such that $\overline r \vert \zeta$ is not an embedding. Then the line $\langle \zeta \rangle$ intersects $\langle h \rangle$ and
$\zeta$ is contained in a fibre of $\overline r$. This, by remark \ref{resolution},  is a $3$-space linearly embedded in $\mathbb G$. It is the family of planes containing
a fixed line of $\mathbb H$. But then $\langle \zeta \rangle$ is a pencil of planes contained in $F'$ and $F'$ is not finite: a contradiction. \end{proof}
 Now we concentrate on Morin-Del Pezzo configurations. We start more in general from a maximal isotropic $A$.  Keeping our notation we assume
$$
F = \mathbb P(A) \cdot \mathbb G = h  \cup   F'
$$
where $\langle h \rangle \cdot \mathbb G = h $ and $F'$ is finite. Let $V_A$ be the $V$-threefold defined by $A$, we want to reconstruct it explicitly and see that it is rational. Notice that
 $u \in h$. We put $u = h_5$ and consider the projection map  from which $V_A$ is constructed. We know that this is the restriction to $\mathbb P(A)$ of the tangential projection of $\mathbb P(\wedge^3W)$ from the embedded tangent space to $\mathbb G$ at $u$. This is just the projection from $u$, we denote since now as
\begin{equation}
p: \mathbb P(A) \to \mathbb P^8,
\end{equation}
see \ref{tang}. $\mathbb P^8$ is the space of the Segre embedding $\mathbb P^2 \times \mathbb P^2$ of $P_u \times P_u^{\perp}$ and we know that  
$$
V_A \subset \mathbb P^2 \times \mathbb P^2 \subset \mathbb P^8.
$$
Since $h$ spans a $3$-space containing $u$ we can add to our play the plane
\begin{equation}
\label {plane}  \it P_h := p(\langle h \rangle).
\end{equation}
Moreover we will also consider the set of four points
\begin{equation}
\label {4 points} h_u := \lbrace p_u(h_1) \dots p_u(h_4) \rbrace \subset P_h.
\end{equation}
These are in general position in $P_h$, since the same is true in $\langle h \rangle$ for $h$.  
 \begin{thm}$V_A$ contains the plane $P_h$, in particular $V_A$ is rational.  \end{thm}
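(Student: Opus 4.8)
The plan is to identify the plane $P_h$ explicitly as a \emph{ruling} plane $P_u \times \{y_0\}$ of the Segre variety $P_u \times P^{\perp}_u$, and then to show that the associated $(2,2)$-form $v_A$ vanishes identically along it, the vanishing being forced by the single fact that the points of $\langle h \rangle$ are represented in $\wedge^3 W_Y$ and $\wedge^6 W_Y = 0$. Rationality of $V_A$ will then follow because the ruling plane is a section of the conic bundle $\pi_x\colon V_A \to P_u$.

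First I would pin down the image $p(\langle h \rangle)$. Every point of $\langle h \rangle$ is represented by a vector $v \in \wedge^3 W_Y$, and $p$ is the passage to $v \bmod W^2_u \in \mathbb P(W^1_u/W^2_u) = \mathbb P^8$. Writing $W_Y = U \oplus T$ with $U = [u]$ (recall $u = h_5$) and $\dim T = 2$, one has $\wedge^3 W_Y = \wedge^3 U \oplus (\wedge^2 U \wedge T) \oplus (U \wedge \wedge^2 T)$, since $\wedge^3 T = 0$. The first two summands lie in $W^2_u$, so $v \bmod W^2_u$ always lands in $U \wedge \wedge^2 T$; and because $\wedge^2 T$ is one-dimensional, under the Segre embedding $s$ this subspace is exactly the ruling plane $P_u \times \{y_0\}$, where $y_0 \in P^{\perp}_u$ is the point corresponding to the hyperplane $\mathbb H = \mathbb P(W_Y)$. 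Hence $p(\langle h \rangle) \subseteq P_u \times \{y_0\}$; as $P_h$ is a plane contained in the plane $P_u \times \{y_0\}$, the two coincide, so $P_h = P_u \times \{y_0\}$.

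Next I would compute the fibre $A_{y_0}$ with $\mathbb P(A_{y_0}) = p^{-1}(P_u \times \{y_0\})$. This is a $3$-space containing $\langle h \rangle$ (because $\langle h \rangle \subset \mathbb P(A)$ and $p(\langle h \rangle) = P_u \times \{y_0\}$), and since $\langle h \rangle$ is itself a $3$-space we get $\mathbb P(A_{y_0}) = \langle h \rangle$, i.e.\ $A_{y_0} = [h] \subset \wedge^3 W_Y$. The restriction of $v_A$ to $P_u \times \{y_0\}$ is, by the construction of Section~2, the quadratic form $\langle\,,\,\rangle_{y_0}$ on $A_{y_0}$ given by $\langle a,a'\rangle_{y_0} = a_1 \wedge a'_2$. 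For $a,a' \in A_{y_0} \subset \wedge^3 W_Y$, all components $a_i, a'_i$ lie in $\wedge^{\bullet} W_Y$, whence $a_1 \wedge a'_2 \in \wedge^6 W_Y = 0$. Therefore $\langle\,,\,\rangle_{y_0} \equiv 0$, the form $v_A$ vanishes on $P_u \times \{y_0\}$, and $P_h = P_u \times \{y_0\} \subset V_A$.

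For rationality, the projection $\pi_x\colon V_A \to P_u$ exhibits $V_A$ as a conic bundle, the fibre over $x$ being the conic $\{y : f(x,y)=0\}$ in $P^{\perp}_u$, and the plane $P_h = P_u \times \{y_0\}$ is a section of $\pi_x$. The generic fibre is then a conic over $\mathbb C(P_u)$ carrying the rational point $y_0$, hence isomorphic to $\mathbb P^1$, so $V_A$ is birational to $P_u \times \mathbb P^1$ and therefore rational. The only delicate point I expect is the identification $A_{y_0} = [h]$ together with the verification that $\langle\,,\,\rangle_{y_0}$ is genuinely computed by wedging two elements of $\wedge^3 W_Y$; once these are secured, the vanishing is immediate from $\wedge^6 W_Y = 0$.
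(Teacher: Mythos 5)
Your proof is correct, and it takes a genuinely different route from the paper's. The paper does not know a priori that $P_h$ lies on the Segre fourfold: the bulk of its argument is a proof by contradiction that $P_h \subset \mathbb P^2 \times \mathbb P^2$, using that the variety of bisecant lines of the Segre is a cubic hypersurface containing the six lines through the four points of $h_u$, so that otherwise $D = P_h \cdot (\mathbb P^2 \times \mathbb P^2)$ would be a $(1,1)$-conic inside a quadric $L_1 \times L_2$, and then a count of singular points on $L_1 \times L_2$ (at most three, against $\vert h_u \vert = 4$) gives the contradiction; once $P_h$ is on the Segre, containment in $V_A$ follows because a conic cannot be singular at the four general-position points of $h_u$. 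You bypass all of this: the decomposition $\wedge^3 W_Y = \wedge^3 U \oplus (\wedge^2 U \wedge T) \oplus (U \wedge \wedge^2 T)$ identifies $P_h$ as the ruling plane $P_u \times \lbrace y_0 \rbrace$ with $y_0$ the point of $P^{\perp}_u$ given by the hyperplane $\mathbb H$ (consistently, $\gamma^{\perp}(h_i) = \langle P_u \cup P_{h_i} \rangle = \mathbb H$ for all $i$), so membership in the Segre is automatic; then $A_{y_0} = [h]$ is right, since $\tau_A^{-1}(P_u \times \lbrace y_0 \rbrace)$ is a $3$-space containing the $3$-space $\langle h \rangle$, and the form dies because $a_1 \wedge a'_2 \in (U \wedge \wedge^2 T)\wedge(\wedge^2 U \wedge T) \subseteq \wedge^3 U \wedge \wedge^3 T = 0$. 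The point you flag is harmless: choose the complement of $U$ in $W$ to contain $T$; the form is independent of this choice because changing the complement alters $a_1$ by elements of $W^2_u = \wedge^2 U \wedge W$ and $(\wedge^2 U \wedge W)\wedge(\wedge^2 U \wedge W) = 0$ as $\wedge^4 U = 0$. Your route buys economy and extra information: it uses neither the general position of $h_u$ nor the finiteness of $\Sing V_A$ for the containment, and it explains intrinsically why $P_h$ is a ruling plane, which the paper only records later (its $P_h = \lbrace o \rbrace \times \mathbb P^2$ in section~7 is your statement with the two factors relabelled). For rationality the paper defers to the birational map $\tilde p_A \colon \tilde V_A \to \mathbb P^1 \times \mathbb P^2$ of section~7, whereas your conic-bundle-with-section argument is self-contained; the only detail to add is that the generic fibre of $\pi_x$ is a smooth conic, which follows from the integrality of $V_A$ and generic smoothness (or from the paper's section~3 lemma that the discriminant is a reduced curve).
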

\begin{proof}  We know that $V_A$ has bidegree $(2,2)$ and isolated singularities. Now assume that $P_h \subset \mathbb P^2 \times \mathbb P^2$. Then, since $V_A$ is singular at the four points of $h_u$, it is clear that $P_h \cdot V_A$ cannot be a conic.  This implies that $P_h \subset V_A$. Hence it suffices to show that $$ P_h \subset \mathbb P^2 \times \mathbb P^2. $$  Assume $P_h$ is not in $\mathbb P^2 \times \mathbb P^2$ and consider the scheme $D := P_h \cdot \mathbb (P^2 \times \mathbb P^2)$. Then $D$ contains the set $h_u$ of four points in general position but  $D \neq P_h$. We claim that then $D$ is a conic. Let us prove this fact: the variety $\Sigma$ of bisecant lines to $\mathbb P^2 \times \mathbb P^2$ is a well known cubic hypersurface and a Severi variety. In particular $\Sigma$ contains the six lines joining two by two the points of $h_u$. Hence $P_h$ is in $\Sigma$, though not in $\mathbb P^2 \times \mathbb P^2$. It is known that every such a plane cuts exactly a conic of bidegree $(1,1)$ on $\mathbb P^2 \times \mathbb P^2$, cfr. \cite[chapter 5]{Ru}. Then $D$ is a conic and its projections in the factors are lines $L_1$ and $L_2$. We have $D \subset L_1 \times L_2$ and $L_1 \times L_2$ is embedded in $\mathbb P^2 \times \mathbb P^2$ as a quadric. Assume $L_1 \times L_2$ is not in $V_A$ then $V_A \cdot (L_1 \times L_2)$ is a quadratic section of $L_1 \times L_2$, singular at the set of coplanar points $h_u$. This implies 
$V_A \cdot ( L_1 \times L_2) = 2D$. Notice also that $D$ spans $P_h$.  \par Now we can fix coordinates $(x_1:x_2:x_3) \times (y_1:y_2:y_3)$ on $\mathbb P^2 \times \mathbb P^2$ so that
\begin{equation}  L_1 \times L_2 = \lbrace x_3 = y_3 = 0 \rbrace \ \text{ and } \ D = \lbrace x_3 = y_3 = d = 0 \rbrace, \end{equation} $d$ being a form of bidegree $(1,1)$  in $(x_1:x_2) \times (y_1:y_2)$. 
Then $2D$ is the complete intersection $\lbrace x_3 = y_3 = d^2 = 0 \rbrace$ and the equation of $V_A$ is $ ax_3 + by_3 + kd^2 = 0$, where $a$ and $b$ are forms of bidegrees $(1,2)$ and $(2,1)$ and $k \neq 0$.  If $L_1 \times L_2$ is in $V_A$ we have $k = 0$. One computes that $\Sing V_A \cap D$ is defined by the equations $a = b = d = x_3 = y_3 = 0$. Moreover $a, b, d$ define in $L_1 \times L_2$ curves $C_a, C_b, D$ of bidegrees $(1,2), (2,1), (1,1)$ and $\Sing V_A$ is finite. Since $C_aD = C_bD = 3$, it follows that $\Sing V_A \cap D$ contains at most three singular points of $V_A$. Since $h_u$ has cardinality $4$ this is a contradiction. Hence we can conclude that $P_h \subset V_A$. Finally, the rationality of $V_A$ follows from the explicit birational map $V_A \to \mathbb P^1 \times \mathbb P^2$ we construct in the next section.
 \end{proof} 
\begin{rem}  Let $F = \mathbb P(A) \cdot \mathbb G$ be any Morin configuration, smooth at $u$ as we assume in this paper, and $V_A$ its associated $V$-threefold. If
 $F$ has length $\geq 16$ then theorem \ref{no plane} implies that $V_A$ contains a plane. Up to $\Aut \mathbb P^2 \times \mathbb P^2$ we can assume that this is $P_h$. Thus Morin configurations of length $\geq 16$ are basically Morin-Del Pezzo configurations.
 \end{rem}
 \section{The $V$-threefold of a Morin-Del Pezzo configuration}
 In this section we construct $V$-threefolds associated to Morin-Del Pezzo configurations.
  Let $F = \mathbb P(A) \cdot \mathbb G$ be a Morin-Del Pezzo configuration and let
  \begin{equation}
  \label{plane} P_h := \lbrace o \rbrace \times \mathbb P^2
  \end{equation}
 be the plane contained in the threefold $V_A$. Now we consider the projection  
   \begin{equation} \label {p_h}
 p_h: \mathbb P^8 \to \mathbb P^5.
\end{equation}
of $\mathbb P^8$ from $P_h$ and study $p_h \vert V_A$. Let us point out that $p_h$ factors as in the diagram
\begin{equation} \label{factor p_h}
\xymatrix{ & & & { \mathbb P(A) } \ar[dl]_{p} \ar[dr]^{\overline r_{ \vert \mathbb P(A)}} & \\
 & &   { \mathbb P^8}  \ar@{>}[rr]^{p_h} & &   {\mathbb P^5 },  }
 \end{equation}
where $\overline r$ is as in section 6. Indeed, $\overline r_{\vert \mathbb P(A)}$ is the projection
from $\langle h \rangle$, while $p$ and $p_h$ are the projections from $u$ and $\langle h_u \rangle$. Then, since $h_u = p(h)$, it follows $\overline r_{\vert \mathbb P(A)} = p_h \circ p$. 
\begin{rem} \label{ triangle} Notice that $\mathbb P^5 = \mathbb P(H_Y) \subset \mathbb P(\wedge^2 W_Y)$ and that $\mathbb P(H_Y) \cdot \mathbb G_{\mathbb H^*}$
is the quintic Del Pezzo surface defined by $<h>^{\perp}$. This, by the definition of $\overline r$,  is the locus \begin{equation} Y_h =  \lbrace \ell_o, o \in Y \rbrace, \end{equation} where $\ell_o$ is the pencil of quadrics of $\mathbb H$ singular at $o$. See \ref{H_Y} and also  lemma \ref{embed}. \end{rem} Let $\sigma: \mathbb P \to \mathbb P^2 \times \mathbb P^2$ be the blowing of $P_h$ then we have the commutative diagram 
\begin{equation} 
\begin{CD}
{\mathbb P} @>{\sigma}>> {\mathbb P^2 \times \mathbb P^2} \\
@VV{\tilde p_h}V @VV{{p_h}_{\vert \mathbb P^2 \times \mathbb P^2}}V \\
{\mathbb P^1 \times \mathbb P^2} @>>> {\mathbb P^5}, \\
\end{CD}
\end{equation}
where $\tilde p_h$ is a $\mathbb P^1$-bundle and the bottom arrow is the Segre embedding. Let us consider the projection $p_o: \mathbb P^2 \to \mathbb P^1$ from the point $o$, then we have
  \begin{equation}
\label{p_o} \tilde p_h \circ \sigma^{-1} = p_o \times id_{\mathbb P^2}: \mathbb P^2 \times \mathbb P^2 \to \mathbb P^1 \times \mathbb P^2.
\end{equation}
Moreover, let $E \subset \mathbb P$ be the exceptional divisor of $\sigma$. Since $P_h$ has trivial normal bundle the morphism $\tilde p_h: E \to \mathbb P^1 \times \mathbb P^2$ is biregular and its inverse defines 
a regular section
\begin{equation} \label{ex.div.}
s: \mathbb P^1 \times \mathbb P^2 \longrightarrow E \subset \mathbb P.
\end{equation}
We want to study the diagram more in detail with respect to $V_A$. Denoting by $\tilde V_A$ the strict transform of $V_A$ via $\sigma$, and by $p_A$ the restriction of $p_h$ to $V_A$, we have: 
\begin{equation}
\label{crucial CD bis}
\begin{CD}
{\tilde V_A} @>{\sigma}>> {V_A} \\
@VV{\tilde p_A}V @VV{p_h}V \\
{\mathbb P^1 \times \mathbb P^2} @>>> {\mathbb P^5}, \\
\end{CD}
\end{equation}
with $\tilde p_A = \tilde p_h \vert \tilde V_A$. It is clear that $\tilde V_A$ is rational, because it is an integral member of
\begin{equation}
\label{tilde H} \vert  \mathcal O_{\mathbb P}(2\tilde H - E) \vert
\end{equation}
where $\mathcal O_{\mathbb P}(\tilde H) \cong \sigma^* \mathcal O_{\mathbb P^2 \times \mathbb P^2}(1,1)$. Since $2\tilde H - E$ has degree one on the fibres of $p_h$ 
then
$$
\tilde p_A: \tilde V_A \longrightarrow \mathbb P^1 \times \mathbb P^2
$$
is birational. Let $E_h = E \cdot \tilde V_A$ be the strict transform of $P_h$ by $\sigma_{\vert \tilde V_A}$, then
\begin{equation}
\label{E_h} \sigma_{\vert V_A}: \tilde V_A-E_h \longrightarrow V_A - P_h
\end{equation}
 is a biregular map. 
 \begin{lemm} $\sigma_{\vert E_h}: E_h \to P_h$ is the blowing up of $h_u$ and $E_h$ is a smooth quintic Del Pezzo surface.
 \end{lemm}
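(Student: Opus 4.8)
The plan is to make the blow-up picture completely explicit in suitable coordinates and read off $E_h$ as the graph closure of the pencil of conics that $V_A$ cuts on $P_h$. First I would choose coordinates $(x_1:x_2:x_3)\times(y_1:y_2:y_3)$ on $\mathbb P^2\times\mathbb P^2$ with $P_h=\{x_1=x_2=0\}$, so that $o=(0:0:1)$ in the first factor. Since $P_h\subset V_A$, the bidegree $(2,2)$ equation $f$ of $V_A$ carries no $x_3^2$-term, and hence (exactly as in Proposition~\ref{no fixed}, with the roles of $x$ and $y$ exchanged) it can be written as
\begin{equation}
f=q_{11}x_1^2+q_{22}x_2^2+q_{12}x_1x_2+q_{13}x_1x_3+q_{23}x_2x_3,
\end{equation}
with $q_{ij}$ quadratic forms in $y$. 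Because the normal bundle of $P_h$ is trivial, the exceptional divisor is $E\cong\mathbb P^1\times P_h$, the $\mathbb P^1$-factor being the projectivised normal direction $(x_1:x_2)$, and $\sigma_{\vert E}\colon E\to P_h$ is the projection onto the $\mathbb P^2_y$-factor.

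Next I would compute $E_h=E\cdot\tilde V_A$. Since $\tilde V_A\in|2\tilde H-E|$, i.e.\ $V_A$ contains $P_h$ with multiplicity one, the divisor $E_h$ is cut on $E$ by the part of $f$ of degree one in $(x_1,x_2)$, namely $x_1x_3q_{13}+x_2x_3q_{23}$; the quadratic-in-$(x_1,x_2)$ terms vanish to higher order along $E$. Setting $x_3=1$ on $P_h$, this shows that inside $E=\mathbb P^1_{(x_1:x_2)}\times\mathbb P^2_y$ the surface $E_h$ is the divisor of bidegree $(1,2)$
\begin{equation}
E_h=\{\,x_1q_{13}(y)+x_2q_{23}(y)=0\,\}.
\end{equation}

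The key identification is then that $\sigma_{\vert E_h}$ is the projection $E_h\to P_h=\mathbb P^2_y$. For $y$ outside $\{q_{13}=q_{23}=0\}$ the equation determines $(x_1:x_2)=(q_{23}(y):-q_{13}(y))$ uniquely, while over each point of $\{q_{13}=q_{23}=0\}$ the whole $\mathbb P^1$ lies in $E_h$. Thus $E_h$ is the closure of the graph of the rational map $P_h\dashrightarrow\mathbb P^1$ defined by the pencil of conics $\langle q_{13},q_{23}\rangle$, and $\sigma_{\vert E_h}$ is the blow-up of $P_h$ at the base locus $\{q_{13}=q_{23}=0\}$. By Proposition~\ref{no fixed} and remark~\ref{Csquare} this base locus is precisely $\Sing V_A\cdot P_h$, which equals the four points $h_u$ of \ref{4 points}; under our standing assumption that $b$ is smooth these are four reduced points in general position. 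Blowing up $\mathbb P^2$ at four points in general position yields a smooth surface with $K^2=9-4=5$, i.e.\ a smooth quintic Del Pezzo surface, which finishes the proof.

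The point requiring the most care is the explicit strict-transform computation: I must check that $\tilde V_A$ restricts on $E$ to exactly the linear-in-$(x_1,x_2)$ part of $f$, and that $q_{13},q_{23}$ share no common factor, so that $E_h$ is the irreducible $(1,2)$-divisor coinciding with the graph closure rather than a reducible surface. The coprimality follows because a common factor would force $\{q_{13}=q_{23}=0\}$ to be positive-dimensional, contradicting both the finiteness of $\Sing V_A$ and the fact that $b$ consists of four distinct points.
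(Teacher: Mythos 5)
Your proof is correct and follows essentially the same route as the paper's: the paper also uses $E\cong\mathbb P^1\times P_h$ with $\sigma_{\vert E}$ the projection, the local equation $f=q_{11}x_1^2+q_{22}x_2^2+q_{12}x_1x_2+q_{13}x_1x_3+q_{23}x_2x_3$ from Proposition~\ref{no fixed} (transposed), and the resulting bidegree $(1,2)$ of $E_h$ cut by the pencil of conics through $h_u$, concluding via the blow-up of four general points. Your version merely carries out explicitly what the paper leaves to the reader (``writing a local equation\dots it is easy to deduce''), and your added checks --- the strict transform restricting on $E$ to the linear part $x_1q_{13}+x_2q_{23}$, and the coprimality of $q_{13},q_{23}$ guaranteeing that $E_h$ is the irreducible graph closure --- are welcome precisions rather than a different argument.
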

 \begin{proof} We have $E = \mathbb P^1 \times P_h$ so that $\sigma_{\vert E}: E \to P_h$ is the natural projection. Let us compute the bidegree
 $(m,n)$ of $E_h$ in $\mathbb P^1 \times E_h$. Since $\tilde V_A$ has degree one on the fibres of $\tilde p_h$, it follows $m = 1$. Now notice 
 that $\Sing V_A \cdot P_h = h_u$, because $\langle h \rangle \cdot \mathbb G = h$.  Then, writing a local equation for a $V$-threefold containing 
 a plane like $P_h$, it is easy to deduce that the pencil of conics through $h_u$ lifts, by $\sigma_{\vert E_h}$, to a pencil of conics. This is cut by the 
 ruling of planes of $E$. Hence $n = 2$ and $E_h$ is the blowing up of $P_h$. Since $h_u$ is a set of four points in general position, then $E_h$ is a 
 smooth quintic Del Pezzo surface.  \end{proof}  Notice also that $\mathcal O_{E_h}(1,1) \cong \omega_{E_h}^{-1}$, therefore the Segre embedding of $E$ restricts to the anticanonical
map of $E_h$. Moreover the next theorem follows.
\begin{thm} $\sigma: \tilde V_A \to V_A$ is the small contraction of four disjoint copies of $\mathbb P^1$. \end{thm}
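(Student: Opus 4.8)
The plan is to localise the non-isomorphism locus of $\sigma$ and then read it off directly from the blow-up description of $E_h$ obtained in the preceding lemma. First I would record that, by \ref{E_h}, the restriction $\sigma_{\vert V_A}$ induces a biregular map $\tilde V_A - E_h \to V_A - P_h$. Consequently $\sigma$ is an isomorphism over $V_A - P_h$, and every positive-dimensional fibre of $\sigma$ must lie in the divisor $E_h$ and be sent into the plane $P_h$. This reduces the whole question to the behaviour of $\sigma_{\vert E_h}$.

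Next I would invoke the preceding lemma, which identifies $\sigma_{\vert E_h}: E_h \to P_h$ with the blowing up of the four points $h_u$. Since $h_u$ is a set of four points in general position, in particular four distinct points, this blow-up has exactly four exceptional curves, each isomorphic to $\mathbb P^1$ and pairwise disjoint. Concretely each of them is the fibre of the projection $\sigma_{\vert E}: E = \mathbb P^1 \times P_h \to P_h$ over a point $x_i \in h_u$, and is therefore contracted by $\sigma$ to that single point $x_i \in P_h \subset V_A$. Combining this with the first step, the locus on which $\sigma: \tilde V_A \to V_A$ fails to be a local isomorphism is precisely the union of these four disjoint copies of $\mathbb P^1$, contracted to the four points of $h_u \subset \Sing V_A$.

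It then remains to verify that the contraction is small, namely that no divisor of $\tilde V_A$ is contracted. The only candidate divisor is $E_h$ itself, but $\sigma_{\vert E_h}$ is birational onto $P_h$, so $E_h$ dominates the divisor $P_h$ and is not contracted; its fibres are one-dimensional only over the four points $h_u$. Hence the exceptional locus of $\sigma$ has codimension two in $\tilde V_A$ and equals the four disjoint lines found above, which is exactly the assertion. I do not expect a genuine obstacle here: the proof is the assembly of the biregular statement \ref{E_h} with the blow-up lemma, and the only point deserving care is the disjointness together with the one-dimensionality of the exceptional fibres, both of which follow immediately from the general position of $h_u$.
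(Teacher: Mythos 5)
Your proof is correct and follows essentially the same route as the paper, which derives the theorem immediately from the biregularity of $\sigma$ off $E_h$ (see \ref{E_h}) together with the lemma identifying $\sigma_{\vert E_h}: E_h \to P_h$ as the blow-up of the four general-position points $h_u$. Your explicit verification of smallness (that $E_h$ maps birationally onto $P_h$ and so is not contracted) merely spells out what the paper leaves implicit.
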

Let us fix the notation $Y_h := \tilde p_A(E_h)$. This is a smooth quintic Del Pezzo surface  
\begin{equation}
Y_h \subset \mathbb P^1 \times \mathbb P^2 \subset \mathbb P^5.
\end{equation}
Now we describe the birational morphism $\tilde p_A: \tilde V_A \to \mathbb P^1 \times \mathbb P^2$ in order to invert it. To this purpose it is useful to consider the
conic bundle $\pi: V_A \to \mathbb P^2$ defined by the projection of $\mathbb P^2 \times \mathbb P^2$ onto the second factor. We have the commutative diagram
\begin{equation}
\label{CD Y_h}
\begin{CD}
{E_h} @>>> {\tilde V_A} @>{\sigma}>> {V_A} \\
@VV{\tilde p_A\vert E_h}V @VV{\tilde p_A}V @VV{\pi}V \\
{Y_h} @>>> {\mathbb P^1 \times \mathbb P^2} @>{\tilde \pi}>> {\mathbb P^2} \\
\end{CD}
\end{equation}
where $\tilde \pi$ is the projection map. Indeed, let $t \in \mathbb \PP^2$ then $\pi^*(t)$ is $V_A \cdot \mathbb( \PP^2 \times \lbrace t \rbrace)$ and
$P_h \cdot \pi^*(t) = (o,t)$. Moreover, $\tilde p_A \circ \sigma^{-1} \vert \pi^*(t)$ is precisely the projection from $(o,t)$
$$ p_{o,t}: \pi^*(t) \to \mathbb P^1 \times \lbrace t \rbrace. $$
Notice that $(o,t) \in \pi^*(t) \subset P_h = \lbrace o \rbrace \times \mathbb P^2$. It is clear that the tangent space to $V_A$ at $(o,t)$ has dimension $4$ if
$(o,t) \in \Sing \pi^*(t)$. This implies the next lemma.
\begin{lemm} Assume $(o,t) \in P_h - h_u$ then $\pi^*(t)$ is smooth at $(o,t)$. \end{lemm}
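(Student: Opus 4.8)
The plan is to prove the statement in the contrapositive form already flagged in the sentence preceding it: a singularity of the fibre conic $\pi^*(t)$ at $(o,t)$ would force $(o,t)$ to be a singular point of $V_A$, which contradicts the equality $\Sing V_A \cdot P_h = h_u$ (established just above) together with the hypothesis $(o,t) \in P_h - h_u$. So the entire content of the lemma reduces to the single implication ``$(o,t) \in \Sing \pi^*(t) \Rightarrow (o,t) \in \Sing V_A$''.

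First I would exhibit the two tangent directions at $(o,t)$ that are forced into the embedded tangent space $T_{V_A,(o,t)} \subset T_{\mathbb P^2 \times \mathbb P^2,(o,t)}$. Since $P_h = \{o\} \times \mathbb P^2$ is contained in $V_A$, the space $T_{V_A,(o,t)}$ contains $T_{P_h,(o,t)}$, i.e.\ the tangent space to the second factor. On the other hand the fibre $\pi^*(t) = V_A \cdot (\mathbb P^2 \times \{t\})$ lies in the first-factor plane $\mathbb P^2 \times \{t\}$, and $T_{\pi^*(t),(o,t)} \subseteq T_{V_A,(o,t)}$ as well.

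Second, I would run the dimension count. The planes $\{o\}\times\mathbb P^2$ and $\mathbb P^2 \times \{t\}$ meet transversally at $(o,t)$: their tangent spaces are the two complementary summands of $T_{\mathbb P^2 \times \mathbb P^2,(o,t)} = T_{\mathbb P^2,o} \oplus T_{\mathbb P^2,t}$. If $\pi^*(t)$ is singular at $(o,t)$, its Zariski tangent space fills the whole first summand $T_{\mathbb P^2,o}$; combined with $T_{P_h,(o,t)} = T_{\mathbb P^2,t}$, already contained in $T_{V_A,(o,t)}$, this forces $T_{V_A,(o,t)}$ to be all $4$-dimensional, i.e.\ $(o,t) \in \Sing V_A$. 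This is precisely the assertion that the tangent space to $V_A$ at $(o,t)$ has dimension $4$. Since $\Sing V_A \cdot P_h = h_u$ and $(o,t) \notin h_u$, we obtain a contradiction, which proves smoothness of $\pi^*(t)$ at $(o,t)$.

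I expect no serious obstacle here; the one point deserving care is the transversality/complementarity of the two tangent planes, on which the dimension count rests (and which also rules out the degenerate case $\mathbb P^2 \times \{t\} \subset V_A$, since two transverse planes of $V_A$ through $(o,t)$ would already place $(o,t)$ in $\Sing V_A \cap P_h = h_u$). It may be cleanest to make everything explicit with $o = (0:0:1)$: then $V_A \supset P_h$ forces the equation $f = q_{11}x_1^2 + q_{12}x_1x_2 + q_{22}x_2^2 + q_{13}x_1x_3 + q_{23}x_2x_3$ to carry no $x_3^2$ term, so every partial $f_{y,i}$ vanishes identically along $P_h$, and at $(o,t)$ the only possibly nonzero derivatives are $f_{x,1}=q_{13}(t)$ and $f_{x,2}=q_{23}(t)$. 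These simultaneously cut $(o,t)$ out of $\Sing \pi^*(t)$ and out of $\Sing V_A$, so the two singular loci coincide on $P_h$ and, by Proposition \ref{no fixed} and Remark \ref{Csquare}, equal the base locus of the pencil $\lambda q_{13}+\mu q_{23}$, namely $h_u$. Either route yields the claim.
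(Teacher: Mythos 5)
Your proof is correct and takes essentially the same route as the paper: the paper derives the lemma precisely from the observation that $(o,t) \in \Sing \pi^*(t)$ forces the tangent space to $V_A$ at $(o,t)$ to be $4$-dimensional, so that $(o,t) \in \Sing V_A \cdot P_h = h_u$, contradicting the hypothesis. Your explicit coordinate check (no $x_3^2$ term, $f_{y,i}$ vanishing along $P_h$, and $\Sing V_A \cap P_h$ cut out by $q_{13}=q_{23}=0$) merely spells out the same dimension count, consistent with the paper's earlier computation in Proposition \ref{no fixed} and its identification $\Sing V_A \cdot P_h = h_u$.
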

Let $\Gamma \subset \mathbb P^2$ be the discriminant sextic of $\pi$ and $t \in \mathbb P^2 - \Gamma$, then $\pi^*(t)$ is a smooth conic. Let $\pi^*(t)'$ be
its strict transform by $\sigma$, then $\pi^*(t)' = (\tilde p_A \circ \tilde \pi)^*(t)$ and
$$ \tilde p_A \vert \pi^*(t)': \pi^*(t)' \to \mathbb P^1 \times \lbrace t \rbrace $$
is biregular and induced by $p_{o,t}$. Moreover, $\tilde p_A$ is regular on $\pi^*(t)'$. In  $E$ we define:
\begin{equation}
\label {model C_h} C_h = E \cdot (\pi \circ \sigma)^*\Gamma.
\end{equation}
$C_h$ is a curve embedded in the Del Pezzo surface $E_h = E \cdot \tilde V_A$. Let 
\begin{equation}
\label{s_o} s_o: \mathbb P^2 \to V_A
\end{equation}
 be the linear isomorphism such that $s_o(t) = (o,t)$ and let $\Gamma_h := s_o(\Gamma)$. Then the next lemma is standard, we omit further details.
 \begin{lemm} $C_h$ is the strict transform of $\Gamma_h$ by the blowing up $\sigma_{\vert E_h}: E_h \to P_h$.  In particular $C_h$ is a quadratic section of the anticanonical embedding of $E_h$.  
 \end{lemm}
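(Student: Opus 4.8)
The plan is to reduce the statement to an explicit computation of the discriminant sextic $\Gamma$ in coordinates adapted to the plane $P_h$, and then to read off the class of its strict transform on the del Pezzo surface $E_h$. First I would choose coordinates as in the normal form of the proof of \ref{no fixed}, with the roles of $x$ and $y$ interchanged: since $V_A$ contains $P_h=\{o\}\times\mathbb P^2$ and $\Sing V_A\cdot P_h=h_u$, taking $o=(0:0:1)$ in the first factor and $P_h=\{x_1=x_2=0\}$, the bidegree $(2,2)$ equation of $V_A$ can be written
\[
f=q_{11}x_1^2+q_{22}x_2^2+q_{12}x_1x_2+q_{13}x_1x_3+q_{23}x_2x_3,
\]
with the $q_{ij}$ conics in $y$. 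Restricting the partials in $x$ to $P_h$ shows, exactly as in \ref{no fixed} and \ref{Csquare}, that $h_u=\{q_{13}=q_{23}=0\}\subset\mathbb P^2_y\cong P_h$ is the base locus of the pencil $\langle q_{13},q_{23}\rangle$, a set of four points $t_1,\dots,t_4$ in general position.

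Next I would compute $\Gamma$ as the vanishing of the $3\times 3$ determinant of the symmetric matrix of $f$ in the variables $x$; because the coefficient of $x_3^2$ is zero, this determinant equals, up to a nonzero scalar,
\[
\det=q_{11}q_{23}^2+q_{22}q_{13}^2-q_{12}q_{13}q_{23},
\]
a sextic in $y$, so that $\Gamma_h=s_o(\Gamma)$ is a plane sextic in $P_h$. The key point, and the main obstacle, is to show that $\Gamma_h$ has an ordinary double point at each of the four points of $h_u$. At such a point both conics $q_{13},q_{23}$ vanish and, the four points being a transverse complete intersection, $q_{13},q_{23}$ serve as local coordinates; since every monomial above is of degree $2$ in $q_{13},q_{23}$, the curve $\Gamma_h$ has multiplicity exactly $2$ there, with leading quadratic form $q_{11}(t_i)\,q_{23}^2+q_{22}(t_i)\,q_{13}^2-q_{12}(t_i)\,q_{13}q_{23}$. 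I would then check that this form is nondegenerate — equivalently that $(o,t_i)$ is an ordinary double point of $V_A$ — so that each singularity is a genuine node.

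Finally I would identify $C_h$ with the strict transform and compute its class. By construction $(\pi\circ\sigma)\vert_{E_h}=\pi\vert_{P_h}\circ\sigma\vert_{E_h}$, and $\pi\vert_{P_h}\colon P_h\to\mathbb P^2$ is the isomorphism inverse to $s_o$, carrying $\Gamma_h$ to $\Gamma$; hence $C_h$ is the closure in $E_h$ of the preimage of $\Gamma_h\setminus h_u$ under the blow-up $\sigma\vert_{E_h}\colon E_h\to P_h$, that is, the strict transform of $\Gamma_h$. Writing $H_0$ for the pullback of a line of $P_h$ and $e_1,\dots,e_4$ for the exceptional curves, one has $-K_{E_h}=3H_0-\sum_i e_i$, while the strict transform of the sextic $\Gamma_h$, which has multiplicity $2$ at each center, has class $6H_0-2\sum_i e_i=2\,(3H_0-\sum_i e_i)=-2K_{E_h}$. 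Since the anticanonical embedding of $E_h$ has hyperplane class $-K_{E_h}$, being the restriction of the Segre $\mathcal O(1,1)$ as noted before the statement, it follows that $C_h$ lies in $\vert -2K_{E_h}\vert=\vert 2H\vert$ and is therefore a quadratic section, of arithmetic genus $6$.
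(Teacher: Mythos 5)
Your computational route is certainly the ``standard'' argument the paper has in mind: the paper omits the proof of this lemma entirely (``the next lemma is standard, we omit further details''), so there is no written proof to compare with. Your normal form for $f$ (vanishing coefficient of $x_3^2$ because $P_h\subset V_A$), the identification $h_u=\{q_{13}=q_{23}=0\}$ via \ref{no fixed}, the discriminant $q_{11}q_{23}^2+q_{22}q_{13}^2-q_{12}q_{13}q_{23}$ (the actual determinant is $-\tfrac14$ of this), and the final class computation $6H_0-2\sum e_i=-2K_{E_h}$, $p_a=6$, are all correct.

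The genuine gap is the step you defer as a ``check''. First, the equivalence you propose is false: in local coordinates $(u_1,u_2,s_1,s_2)$ at $(o,t_i)$ the quadratic part of $f$ is $q_{11}(t_i)u_1^2+q_{12}(t_i)u_1u_2+q_{22}(t_i)u_2^2+\ell_{13}(s)u_1+\ell_{23}(s)u_2$, whose Hessian has block form $\left(\begin{smallmatrix}Q&L\\ L^{T}&0\end{smallmatrix}\right)$ with determinant $(\det L)^2\neq 0$ by the transversality of $q_{13},q_{23}$ at $t_i$. Hence $(o,t_i)$ is an ordinary double point of $V_A$ \emph{no matter what} $Q=(q_{11},q_{12},q_{22})(t_i)$ is, even when $Q=0$; so ODP-ness of $V_A$ cannot certify that your leading quadratic form is nonzero, let alone nondegenerate. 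Second, what your class computation actually needs is only multiplicity exactly $2$ at each $t_i$, i.e.\ $Q\neq 0$, which is equivalent to $\mathbb P^2\times\{t_i\}\not\subset V_A$; nondegeneracy (node versus cusp) is irrelevant for the divisor class. Third --- and this is why the point is not a routine verification --- $Q=0$ genuinely occurs within the scope of the paper: for configurations with tangential singularities (length $\geq 16$, in particular the unique maximal one, whose threefold $V^{ram}$ appears in section 4), $\Gamma$ is the union of the three singular conics of the pencil, a complete quadrilateral with \emph{triple} points at the four points of $\pi(h_u)$. There the strict transform of $\Gamma_h$ is the union of the six lines $H_0-e_i-e_j$ of $E_h$, of class $6H_0-3\sum e_i\neq -2K_{E_h}$, and the conclusion that $C_h$ is a quadratic section only survives because the four exceptional curves $e_i$ must be included in $C_h$, yielding all ten lines of $E_h$, which map to $C_{\ell}$. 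So your proof is valid for a general Morin--Del Pezzo configuration, but to use the lemma as section 8 does you must either add the hypothesis that no plane of the other ruling of $V_A$ meets $P_h$ in a point of $h_u$, or work with the pullback divisor $E\cdot(\pi\circ\sigma)^{*}\Gamma$ (which automatically carries the exceptional components) instead of the strict transform; relatedly, your identification of $C_h$ with the closure of the preimage of $\Gamma_h\setminus h_u$ silently discards the exceptional part of the scheme-theoretic pullback, which is harmless generically but is precisely where the special cases bite.
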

 Finally let us define and consider the following varieties
\begin{defi} \label { C and F} $C := p_{h*}C_h$ and $F := p_h^*C.$ \end{defi}
$F$ is a $\mathbb P^1$-bundle over $C$ and  $C$ is the biregular to $C_h$ via $p_h$. We have
$$
C \subset Y_h \subset \mathbb P^1 \times \mathbb P^2 \subset \mathbb P^5.
$$ 
We recall that $C$ is complete intersection in $\mathbb P^1 \times \mathbb P^2$ of $Y_h$ and a quadratic section.  
 \begin{thm} $\tilde p_A: \tilde V_A \to \mathbb P^1 \times \mathbb P^2$ is the contraction of $F$. \end{thm}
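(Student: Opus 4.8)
The plan is to read off the birational morphism $\tilde p_A$ fibrewise through the conic bundle $\pi\colon V_A\to\mathbb P^2$ and the compatible projection $\tilde\pi\colon\mathbb P^1\times\mathbb P^2\to\mathbb P^2$ of diagram \ref{CD Y_h}. Since $\tilde p_A$ is already known to be birational and the target $\mathbb P^1\times\mathbb P^2$ is smooth, it will suffice to determine the locus on which $\tilde p_A$ fails to be an isomorphism and to identify it with $F=p_h^{*}C$, contracted onto $C$. First I would dispose of a general $t\in\mathbb P^2-\Gamma$. There $\pi^{*}(t)$ is a smooth conic containing the point $(o,t)=P_h\cdot\pi^{*}(t)$, and by \ref{p_o} the restriction of $\tilde p_A$ to the strict transform $\pi^{*}(t)'$ is the projection $p_{o,t}$ of that conic from the point $(o,t)$ lying on it, resolved by $\sigma$. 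Such a projection is an isomorphism onto $\mathbb P^1\times\{t\}$, so $\tilde p_A$ is an isomorphism over $\mathbb P^1\times(\mathbb P^2-\Gamma)$ and its entire exceptional locus lies over the discriminant $\Gamma$.

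Next I would analyse a general $t\in\Gamma$. There $\pi^{*}(t)$ degenerates to a nodal conic $\ell_1\cup\ell_2$, and, since $\Sing V_A\cdot P_h=h_u$ is a finite set, the earlier lemma shows that for all but finitely many such $t$ the point $(o,t)$ is a smooth point of $\pi^{*}(t)$, say $(o,t)\in\ell_1$. Projecting from $(o,t)$ then contracts the component $\ell_1$ through the centre to a single point of $\mathbb P^1\times\{t\}$, while $\ell_2$ is mapped isomorphically. Hence over each general $t\in\Gamma$ the strict transform $\tilde\ell_1$ of the component through $(o,t)$ is the unique curve of the fibre contracted by $\tilde p_A$.

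I would then assemble these contracted lines. As $t$ runs over $\Gamma$ they sweep out a divisor $\Phi\subset\tilde V_A$ which is a $\mathbb P^1$-bundle over its image. Each $\tilde\ell_1$ lies in $(\pi\circ\sigma)^{*}\Gamma$ and meets the surface $E_h$, on which $\tilde p_A$ restricts isomorphically onto $Y_h$, in a single point; as $t$ varies these points trace out exactly the curve $C_h=E\cdot(\pi\circ\sigma)^{*}\Gamma$ of \ref{model C_h}. Since $C=p_{h*}C_h=\tilde p_A(C_h)$ and, by \ref{C and F}, $F=p_h^{*}C$ is the union of the fibres of $\tilde p_A$ over $C$, this gives $\Phi=F$ and $\tilde p_A(\Phi)=C$. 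Combining the two cases, $\tilde p_A$ is an isomorphism off $F$ and contracts the divisor $F$ onto the curve $C$; that is, $\tilde p_A$ is the contraction of $F$, which identifies $\tilde V_A$ with the blow-up of $\mathbb P^1\times\mathbb P^2$ along $C$ whose inverse is the map $q\times p$ constructed in the sequel.

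The main obstacle will be the finitely many fibres over $t\in\Gamma$ with $(o,t)\in h_u$, that is the points where $(o,t)$ is the \emph{node} of $\pi^{*}(t)$: these are precisely the four points blown down by the small contraction $\sigma\colon\tilde V_A\to V_A$, so there both components of $\pi^{*}(t)$ pass through the centre and the contracted configuration interacts with the four exceptional $\mathbb P^1$'s of $\sigma$. One must check that $\Phi$ nonetheless remains an irreducible divisor across these special fibres and that its image is the entire curve $C$ rather than a proper subcurve, so that no spurious extra contraction occurs and $F$ is exactly the exceptional locus of $\tilde p_A$.
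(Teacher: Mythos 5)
Your fibrewise strategy is sound over the generic part of $\Gamma$, but as written the proof is incomplete, and you say so yourself: the verification at the special fibres is exactly the content you defer with ``one must check''. Concretely, over the four points $t$ with $(o,t)\in h_u$ the centre of projection is the \emph{node} of $\pi^*(t)$, so both components are lines through $(o,t)$; their strict transforms are \emph{two} fibres of $\tilde p_h$ contained in $\tilde V_A$, both contracted, and in addition the $\sigma$-exceptional curve $e\subset E_h$ over $(o,t)$ enters the fibre of $\pi\circ\sigma$. To close the argument you must check (i) that $e$ is not contracted --- true, because $\tilde p_h\vert_E$ is biregular by \ref{ex.div.}, so $e$ maps isomorphically onto $\mathbb P^1\times\lbrace t\rbrace$ --- and (ii) that the two contracted lines are the fibres of $F$ over the two points of $C$ lying above the node of $\Gamma_h$ at $(o,t)$, using that $\Sing\Gamma\supset\pi(h_u)$ (remark \ref{determ}) and that $C_h$ is the strict transform of $\Gamma_h$ under the blow-up of $h_u$. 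Your list of exceptional fibres is also not exhaustive: the finitely many $t\in\Gamma$ where $\pi^*(t)$ has rank $1$, and the fibres under the remaining singular points of $V_A$ (away from $P_h$), need the same care. None of this would fail, but without it the identification $\Phi=F$ --- that $F$ is the \emph{whole} exceptional locus and that nothing else is contracted --- is only established over a dense open subset of $\Gamma$. A further tacit point in your reduction is that a set-theoretic, fibre-by-fibre analysis detects the full non-embedding locus; this is fine by Zariski's main theorem since $\tilde V_A$ is normal and the target is smooth, but it should be said.

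The paper's proof sidesteps the entire case division with one global observation: $\tilde V_A$ belongs to $\vert 2\tilde H-E\vert$ and therefore has intersection number $1$ with every fibre $f$ of the $\mathbb P^1$-bundle $\tilde p_h$. Hence if $\tilde p_A$ fails to be an embedding on a length-$2$ scheme $\zeta$, then $\zeta$ lies in a fibre $f$, and the intersection number forces $f\subset\tilde V_A$ entirely; then $\sigma_*f$ is a line contained in $\pi^*(t)$, so $t\in\Gamma$, whence $f\cap E\in C_h$ and $f\subset F$. This treats general fibres, the fibres over $h_u$, and rank-$1$ fibres uniformly --- precisely the uniformity your approach must manufacture by hand. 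If you want to keep your route, supply the checks above; otherwise the degree-one-on-fibres observation is the cleaner key.
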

\begin{proof} Let $\zeta \subset \tilde V_A$ be a scheme of length $2$. Assume that the morphism $\tilde p_A$ is not an
embedding on $\zeta$. Then $\zeta \subset f$ for a fibre $f$ of $p_h: \mathbb P \to \mathbb P^1 \times \mathbb P^2$.
Since $\tilde V_A$ has intersection index $1$ with $f$, it follows $f \subset \tilde V_A$. Notice also that, as every fibre of $p_h$,  $\sigma_*f$ is a line in
a plane $\mathbb P^2 \times \lbrace t \rbrace$. Hence the fibre $\pi^*(t)$ cannot be a smooth conic, since it  contains the line $f$. Then we
have $\sigma_*f \subset \pi^* \Gamma$ and $f \subset F$.  This implies the statement. \end{proof}
\begin{rem} In a more descriptive way let $t$ be a point of $\Gamma$ such that $t \notin h_u$. Then $\pi^*(t)$ is a rank $2$ conic and it is not singular at
$(o,t)$, as remarked. Let $f + \overline f \subset \tilde V_A$ be its strict transform by $\sigma$. Then a summand, say $f$, is a fibre of $p_h$ and intersects
$F_h$. For the other summand the map  $\tilde p_A: \overline f \to \mathbb P^1 \times \lbrace t \rbrace$ is a linear isomorphism.  \end{rem}
\begin{rem} \label{Wirtinger} Let $g: \tilde {\Gamma} \to \Gamma$ be the degree $2$ cover defined by $\pi: V_A \to \mathbb P^2$. Then
$\tilde \Gamma$ parametrizes the lines in $\pi^*(t)$, $t \in \Gamma$. At a general $t$ we have $g^*(t) = \lbrace f, \overline f \rbrace$. Since 
$f$ and $\overline f$ are distinguished by the intersection with $P_h$, then $\tilde \Gamma$ is split over $\Gamma$. If $\Gamma$ is nodal one can 
see that $g$ is a Wirtinger cover of $\Gamma$, in the sense of \cite[section 5]{B2}. \end{rem}

We can now reconstruct $V_A$, describing explicitly the inverse map 
\begin{equation} \ \sigma \circ \tilde p_A^{-1}: \mathbb P^1 \times \mathbb P^2 \longrightarrow \mathbb P^8 \end{equation}
and its image $V_A \subset \mathbb P^2 \times \mathbb P^2 \subset \mathbb P^8$. Let $\mathcal J_{P_h}$ be the ideal sheaf of $P_h$ in $\mathbb P^2 \times \mathbb P^2$, then the rational map $p_h$ is defined by  $\vert \mathcal J_{P_h}(1,1) \vert$. Since we have 
\begin{equation} F_h = (\sigma \vert \tilde V_A)^* P_h \ \text{and} \ Y_h = \tilde p_{A*} F_h,
\end{equation}
 it follows 
\begin{equation}
\tilde p_{A*}(\sigma \vert \tilde V_A)^* \vert \mathcal J_{P_h}(1,1) \vert = Y_h + \vert \mathcal O_{\mathbb P^1 \times \mathbb P^2}(1,1) \vert \subset \vert \mathcal O_{\mathbb P^1 \times \mathbb P^2}(2,3) \vert
 \end{equation}
where $Y_h + \vert \mathcal O_{\mathbb P^1 \times \mathbb P^2}(1,1) \vert$ denotes the linear system of divisors of bidegree $(2,3)$ having $Y_h$ as a fixed component. This is contained in  the linear system $\mathbb J$, defining the rational map $ \sigma \circ \tilde p_A^{-1}: \mathbb P^1 \times \mathbb P^2 \to \mathbb P^8$. Let $\mathcal J_C$ be the ideal sheaf of $C$. Then we  have  \begin{equation}
\label{ideal C}
Y_h + \vert \mathcal O_{\mathbb P^1 \times \mathbb P^2}(1,1) \vert  \subset \mathbb J \subseteq  \vert \mathcal J_C(2,3) \vert,
\end{equation}
 just because $C$ is in the indeterminacy of $\sigma \circ \tilde p_A^{-1}$. Now the target space of this rational map is $\mathbb P^8$, since $V_A$ is not contained in a hyperplane. This implies   $\dim \mathbb J = 8$ and makes our reconstruction much simpler.    
   \begin{thm} $\sigma \circ \tilde p_A^{-1}: \mathbb P^1 \times \mathbb P^2 \to \mathbb P^8$ is defined by the linear system $\vert \mathcal I_C(2,3) \vert$. \end{thm}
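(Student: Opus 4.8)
The plan is to upgrade the chain of inclusions $Y_h + |\mathcal O_{\mathbb P^1 \times \mathbb P^2}(1,1)| \subseteq \mathbb J \subseteq |\mathcal J_C(2,3)|$ to equalities by a pure dimension count. We already know that $\dim \mathbb J = 8$, since the image $V_A$ spans $\mathbb P^8$; and the right-hand system $|\mathcal J_C(2,3)| = |\mathcal I_C(2,3)|$ is the full system of bidegree $(2,3)$ forms vanishing on $C$. Hence it suffices to prove that $h^0(\mathbb P^1 \times \mathbb P^2, \mathcal I_C(2,3)) = 9$, that is $\dim |\mathcal I_C(2,3)| = 8$: then the inclusion $\mathbb J \subseteq |\mathcal I_C(2,3)|$ of two $8$-dimensional linear systems is forced to be an equality, which is exactly the assertion that $\sigma \circ \tilde p_A^{-1}$ is defined by $|\mathcal I_C(2,3)|$.

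To carry out the count I would first record the bidegrees of the two divisors cutting out $C$. By the lemma computing the class of $E_h$, its image $Y_h = \tilde p_A(E_h)$ is a divisor of bidegree $(1,2)$ on $\mathbb P^1 \times \mathbb P^2$ (a quintic, consistent with $1 + 2\cdot 2 = 5$), while $C_h$, hence $C = p_{h*}C_h$, is cut on $Y_h$ by a quadratic section, i.e. by a divisor of bidegree $(2,2)$. Since $C$ is the complete intersection of these two divisors, its Koszul resolution reads
\begin{equation}
0 \to \mathcal O(-3,-4) \to \mathcal O(-1,-2) \oplus \mathcal O(-2,-2) \to \mathcal I_C \to 0.
\end{equation}

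Twisting by $\mathcal O(2,3)$ produces a short exact sequence with terms $\mathcal O(-1,-1)$, then $\mathcal O(1,1) \oplus \mathcal O(0,1)$, and finally $\mathcal I_C(2,3)$. By the K\"unneth formula on $\mathbb P^1 \times \mathbb P^2$ the sheaf $\mathcal O(-1,-1)$ has vanishing cohomology in every degree, because $H^{\bullet}(\mathbb P^1, \mathcal O(-1)) = 0$. The associated long exact sequence therefore yields $H^0(\mathcal I_C(2,3)) \cong H^0(\mathcal O(1,1)) \oplus H^0(\mathcal O(0,1))$, of dimension $2\cdot 3 + 1 \cdot 3 = 9$, as required. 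The only point needing care, and the main potential obstacle, is to confirm that the two divisors meet properly so that $C$ genuinely is a complete intersection with the asserted bidegrees $(1,2)$ and $(2,2)$; granting the description of $Y_h$ and $C_h$ established above, this is immediate, and the theorem follows.
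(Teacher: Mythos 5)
Your proposal is correct and takes essentially the same route as the paper: both reduce the statement to the dimension count $h^0(\mathcal I_C(2,3)) = 9$ (forcing the inclusion $\mathbb J \subseteq \vert \mathcal I_C(2,3)\vert$ of $8$-dimensional systems to be an equality) and obtain that count from the complete intersection structure $C = Y_h \cdot (2,2)$ with $Y_h$ of bidegree $(1,2)$. The only cosmetic difference is that you use the full Koszul resolution of $\mathcal I_C$ on $\mathbb P^1 \times \mathbb P^2$, while the paper uses the equivalent restriction sequence $0 \to \mathcal O_{\mathbb P^1 \times \mathbb P^2}(1,1) \to \mathcal J_C(2,3) \to \mathcal O_{Y_h}(0,1) \to 0$, both giving $6 + 3 = 9$.
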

\begin{proof} It suffices to show that $\dim \vert \mathcal J_C(2,3) \vert = 8$. This follows, with the usual notation,  from the standard exact sequence of ideal sheaves
$$
0 \to \mathcal J_{Y_h}(2,3) \to \mathcal J_C(2,3) \to \mathcal J_{C \vert Y_h}(2,3) \to 0.
$$
It is easy to see that this is actually the sequence
$$
0 \to \mathcal O_{Y_h}(1,1) \to \mathcal J_C(2,3) \to \mathcal O_{Y_h}(0,1) \to 0.
$$
Passing to the associated long exact sequence it follows $h^0(\mathcal J_C(2,3)) = 9$.
\end{proof}
 Finally let us remark that $h^0(\mathcal O_{\mathbb P^1 \times \mathbb P^2}(1,1)) = h^0(\mathcal J_C(2,2)) = 3$ and let  
\begin{equation}
\label{mu}\mu: H^0(\mathcal O_{\mathbb P^1 \times \mathbb P^2}(0,1)) \otimes H^0(\mathcal J_C(2,2)) \to H^0(\mathcal I_C(2,3))
\end{equation}
be the multiplication map. Consider the rational maps 
\begin{equation} \label{reconstruct}
\pi_1, \pi_2: \mathbb P^1 \times \mathbb P^2 \to \mathbb P^2 \times \mathbb P^2
\end{equation}
respectively defined by the net of surfaces $\vert \mathcal J_C(2,2) \vert$ and $\vert \mathcal O_{\mathbb P^1 \times \mathbb P^2}(0,1) \vert$. We claim that
$\mu$ is an isomorphism. Then $\sigma \circ \tilde p_A^{-1}: \mathbb P^1 \times \mathbb P^2 \to \mathbb P^8$ clearly factors through the product map
$\pi_1 \times \pi_2$ and the Segre embedding $\mathbb P^2 \times \mathbb P^2 \to \mathbb P^8$. This makes clear how to effectively reconstruct $V_A$
from $\pi_1 \times \pi_2$. Let us prove our claim.
\begin{thm}Ê$\mu$ is an isomorphism. \end{thm}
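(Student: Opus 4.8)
The plan is to exploit that both sides of $\mu$ have dimension $9$, so that it suffices to prove injectivity. The target satisfies $h^0(\mathcal I_C(2,3)) = 9$ by the previous theorem, while the source $H^0(\mathcal O_{\mathbb P^1\times\mathbb P^2}(0,1))\otimes H^0(\mathcal I_C(2,2))$ is a tensor of two $3$-dimensional spaces: indeed $\mathcal O(0,1)$ pulls back $\mathcal O_{\mathbb P^2}(1)$, so $h^0(\mathcal O(0,1))=3$, and $h^0(\mathcal I_C(2,2))=3$ via the sequence $0 \to \mathcal O(1,0) \to \mathcal I_C(2,2) \to \mathcal O_{Y_h} \to 0$ obtained exactly as in the previous proof (here I write $\mathcal I_C$ for the ideal sheaf of $C$, denoted $\mathcal J_C$ above). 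Once injectivity is established, the equality of dimensions forces $\mu$ to be an isomorphism.

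Next I would make $H^0(\mathcal I_C(2,2))$ completely explicit. By adjunction from $\omega_{\mathbb P^1\times\mathbb P^2}=\mathcal O(-2,-3)$ together with the anticanonical identity $\mathcal O_{Y_h}(1,1)\cong\omega_{Y_h}^{-1}$ recorded above, the quintic Del Pezzo $Y_h$ lies in $|\mathcal O(1,2)|$; write $\Phi$ for its defining form, of bidegree $(1,2)$, and recall that $C=Y_h\cap\{Q=0\}$ for a form $Q$ of bidegree $(2,2)$ not vanishing on $Y_h$. The two sections $t_0\Phi,\,t_1\Phi$, with $t_0,t_1$ a basis of $H^0(\mathcal O_{\mathbb P^1}(1))$, span $H^0(\mathcal O(1,0))\cdot\Phi=H^0(\mathcal I_{Y_h}(2,2))$, and $Q$ supplies the remaining generator coming from $\mathcal O_{Y_h}$ in the displayed sequence. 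Thus $H^0(\mathcal I_C(2,2))=\langle t_0\Phi,\; t_1\Phi,\; Q\rangle$.

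Finally I would test injectivity directly. Taking $y_1,y_2,y_3$ as a basis of $H^0(\mathcal O(0,1))$, a general element of the source is $\sum_i y_i\otimes g_i$ with $g_i=a_i t_0\Phi+b_i t_1\Phi+c_i Q$, and it lies in $\ker\mu$ precisely when $\sum_i y_i g_i=0$ in the coordinate ring $\mathbb C[t_0,t_1,y_1,y_2,y_3]$. Setting $A=\sum_i a_i y_i$, $B=\sum_i b_i y_i$, $\Gamma=\sum_i c_i y_i$, this reads $(At_0+Bt_1)\Phi+\Gamma Q=0$. Since $Y_h$ is smooth and irreducible, $\Phi$ is irreducible, and as $Q$ does not vanish on $Y_h$ we have $\gcd(\Phi,Q)=1$ in this unique factorization domain; hence $\Phi\mid\Gamma$, which is impossible for a nonzero $\Gamma$ of bidegree $(0,1)$ since $\Phi$ has bidegree $(1,2)$, forcing $\Gamma=0$ and then $(At_0+Bt_1)\Phi=0$, whence $A=B=0$. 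As $y_1,y_2,y_3$ and the monomials $t_0y_i,t_1y_i$ are linearly independent, all $a_i,b_i,c_i$ vanish, so $\sum_i y_i\otimes g_i=0$ and $\mu$ is injective. The \textbf{main obstacle} is the bookkeeping of the second step, namely pinning down that $Y_h\in|\mathcal O(1,2)|$ and identifying the explicit generators $t_0\Phi,t_1\Phi,Q$ of $H^0(\mathcal I_C(2,2))$; with these in hand, injectivity reduces to the clean coprimality argument above.
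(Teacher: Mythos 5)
Your proof is correct, but it takes a genuinely different route from the paper's. The paper tensors the short exact sequence $0 \to \mathcal O_{\mathbb P^1 \times \mathbb P^2}(1,0) \to \mathcal J_C(2,2) \to \mathcal O_{Y_h} \to 0$ with $L = H^0(\mathcal O_{\mathbb P^1\times\mathbb P^2}(0,1))$, compares the resulting long exact sequence with the one for $\mathcal J_C(2,3)$, and concludes by a five-lemma chase: the outer multiplication maps $\mu_1 \colon L \otimes H^0(\mathcal O(1,0)) \to H^0(\mathcal O(1,1))$ (an isomorphism by K\"unneth, $6 = 6$) and $\mu_2 \colon L \otimes H^0(\mathcal O_{Y_h}) \to H^0(\mathcal O_{Y_h}(0,1)) \cong L$ are isomorphisms, hence so is $\mu$; no equations are ever chosen and no dimension count of the target is needed (exactness of the bottom row follows from $H^1(\mathcal O(1,1)) = 0$). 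You instead use the same exact sequence only to extract the explicit basis $t_0\Phi, t_1\Phi, Q$ of $H^0(\mathcal J_C(2,2))$ (your identifications $Y_h \in \vert \mathcal O(1,2)\vert$ and $C = Y_h \cap \lbrace Q = 0\rbrace$ match what the paper asserts, so this step is sound), then prove injectivity by the UFD coprimality argument in $\mathbb C[t_0,t_1,y_1,y_2,y_3]$ — $\Phi$ irreducible, $\Phi \nmid Q$, bidegree obstruction forcing $\Gamma = 0$ — and finish with the count $9 = 9$, which does require the previous theorem's computation $h^0(\mathcal J_C(2,3)) = 9$. What your approach buys is an elementary, completely explicit verification that doubles as a concrete description of the kernel condition and of the three generators, useful for actual computation with these threefolds; what the paper's approach buys is brevity and functoriality, since the five lemma dispenses with choosing coordinates, with irreducibility of $\Phi$, and with the dimension of the target. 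Both arguments ultimately rest on the same geometric input, namely that $Y_h$ has bidegree $(1,2)$ and $C$ is a complete intersection on it.
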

\begin{proof} Consider the standard exact sequence of ideal sheaves of $\mathbb P^1 \times \mathbb P^2$
$$
0 \to \mathcal J_{Y_h} (2,2) \to \mathcal J_C(2,2) \to \mathcal J_{C \vert Y_h}(2,2) \to 0.
$$
Since $Y_h$ has bidegree $(1,2)$ this is just
$$
0 \to \mathcal O_{\mathbb P^1 \times \mathbb P^2} (1,0) \to \mathcal J_C(2,2) \to \mathcal O_{Y_h} \to 0.
$$
Tensor it by $L \otimes \mathcal O_{\mathbb P^1 \times \mathbb P^2}$ with $L := H^0(\mathcal O_{\mathbb P^1 \times \mathbb P^2}(0,1))$. Passing to the corresponding
long exact sequences, one obtains the exact commutative diagram
$$
\begin{CD}
0 @>>> {L \otimes H^0(\mathcal O_{\mathbb P^1 \times \mathbb P^2}(1,0))}  @>>> {L \otimes H^0(\mathcal I_C(2,2))}Ê@>>> {L \otimes H^0(\mathcal O_{Y_A}})@>>> 0 \\
@. @VV{\mu_1}V @VV{\mu}V @VV{\mu_2}V @. \\
0 @>>> {H^0(\mathcal O_{\mathbb P^1 \times \mathbb P^2}(1,1))} @>>> {H^0(\mathcal I_C(2,2))} @>>> L @>>> 0 \\
\end{CD}
$$
where $\mu_1, \mu_2$ are multiplication maps and isomorphisms. Then $\mu$ is an isomorphism.
\end{proof}
Finally, we conclude this section by the following remark. 
\begin{rem} \label{determ} As above let $\Gamma \subset \mathbb P^2$ be the discriminant sextic of $\pi: V_A \to \mathbb P^2$. The set $\Sing \Gamma$ contains the set of four points $\pi(h_u)$. Let $\mathcal I$ be its ideal sheaf in $\mathbb P^2$, then the product map $H^0(\mathcal I(2)) \otimes H^0(\mathcal O_{\mathbb P^2}(1)) \to H^0(\mathcal I_{\mathbb P^2}(3))$ is an isomorphism. Moreover, the pencil of
conics $\vert \mathcal I(2) \vert$ defines a rational map $q: \mathbb P^2 \to \mathbb P^1$ and hence the birational embedding
$$
 q \times id_{\mathbb P^2}: \mathbb P^2 \longrightarrow  \mathbb P^1 \times\mathbb P^2 \subset \mathbb P^5,
$$
whose image in $\mathbb P^1 \times \mathbb P^2$ is $Y_h$. We know that $C$ is the strict transform of $\Gamma$ by $q \times id_{\mathbb P^2}$. Composing $q \times id_{\mathbb P^2}$ with
the product map $\pi_1 \times \pi_2$ we obtain the plane $P_h$. Moreover, the image of $\pi_1 \times \pi_2$ in $\mathbb P^2 \times \mathbb P^2$  is the $V$-threefold $V_A$ and we retrieve 
$\Gamma$ as the discriminant curve of its projection $\pi: V_A \to \mathbb P^2$. Clearly this construction always works: under the only assumption that the sextic $\Gamma$ contains four singular points in general position. This shows the next property. 
\begin{thm} Let $\Gamma \subset \mathbb P^2$ be any sextic with four singular points in general position. Then $\Gamma$ is the discriminant curve of a conic bundle
$\pi: V_{\Gamma} \to \mathbb P^2$ such that:
\begin{enumerate}
\item $V_{\Gamma}$ is a bidegree $(2,2)$ threefold in $\mathbb P^2 \times \mathbb P^2$,
\item $\pi: V_{\Gamma} \to \mathbb P^2$ is one of the two projections,
\item $V_{\Gamma}$ contains a plane $P$ transversal to $\pi$.
\end{enumerate}
\end{thm}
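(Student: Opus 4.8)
The plan is to run the reconstruction of Section~7 backwards, taking $\Gamma$ as the only input. First I would normalise the four singular points to $(1:0:0),(0:1:0),(0:0:1),(1:1:1)$ and let $\mathcal I$ be their ideal sheaf; since they are in general position and are double points of $\Gamma$, one has $\dim|\mathcal I(2)|=1$, i.e.\ a pencil of conics, and the product map $H^0(\mathcal I(2))\otimes H^0(\mathcal O_{\mathbb P^2}(1))\to H^0(\mathcal I(3))$ is an isomorphism as noted in remark~\ref{determ}. The pencil defines $q\colon\mathbb P^2\dashrightarrow\mathbb P^1$, hence the birational embedding $q\times\mathrm{id}\colon\mathbb P^2\to\mathbb P^1\times\mathbb P^2$, whose image $Y_h$ is a quintic Del Pezzo surface of bidegree $(1,2)$, anticanonically embedded by the Segre map. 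Because the four points are double points, the strict transform $C$ of $\Gamma$ under the blow-up $q\times\mathrm{id}$ satisfies $C=6H-2\sum_{i=1}^4E_i=-2K_{Y_h}$, so $C$ is a quadratic section of $Y_h$, i.e.\ $C\in|\mathcal O_{Y_h}(2,2)|$.

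Next I would observe that the cohomological computations of Section~7 used nothing about $C$ beyond the facts $C\subset Y_h$, $Y_h$ of bidegree $(1,2)$, and $C$ a quadratic section. Hence the exact sequence
\begin{equation*}
0\to\mathcal O_{\mathbb P^1\times\mathbb P^2}(1,0)\to\mathcal J_C(2,2)\to\mathcal O_{Y_h}\to0
\end{equation*}
and its analogue in bidegree $(2,3)$ apply verbatim, giving $h^0(\mathcal J_C(2,2))=3$ and the isomorphism of the multiplication map $\mu$ of \eqref{mu}. I would then define $\pi_1,\pi_2\colon\mathbb P^1\times\mathbb P^2\dashrightarrow\mathbb P^2$ by $|\mathcal J_C(2,2)|$ and $|\mathcal O(0,1)|$ as in \eqref{reconstruct}, and set $V_\Gamma:=(\pi_1\times\pi_2)(\mathbb P^1\times\mathbb P^2)\subset\mathbb P^2\times\mathbb P^2$, with $\pi:=\pi_2$ the second projection.

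It then remains to verify the three properties. Property~(2) is built in: $\pi=\pi_2$ is a projection, and restricting $\mathcal J_C(2,2)$ to a fibre $\mathbb P^1\times\{t\}$ gives $\mathcal O_{\mathbb P^1}(2)$, so each fibre of $\pi$ is a conic and $\pi$ is a conic bundle. For property~(3), the displayed sequence shows the restriction $H^0(\mathcal J_C(2,2))\to H^0(\mathcal O_{Y_h})=\mathbb C$ has one-dimensional image, so $\pi_1$ contracts $Y_h$ to a single point $o$; thus $(\pi_1\times\pi_2)(Y_h)=\{o\}\times\mathbb P^2=:P\subset V_\Gamma$, a plane mapped isomorphically by $\pi_2$, hence transversal to $\pi$. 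To recover $\Gamma$ as the discriminant I would note that the fibre over $t$ degenerates exactly when $\mathbb P^1\times\{t\}$ meets $C$, i.e.\ when $t\in\pi_2(C)=\Gamma$. Finally property~(1) follows by bookkeeping: $V_\Gamma$ has degree $2$ in the first factor by the conic-bundle structure, say of bidegree $(2,b)$, so its discriminant $\det(a_{ij}(y))=0$ has degree $3b$; since this discriminant is the sextic $\Gamma$, we get $3b=6$ and $V_\Gamma$ has bidegree $(2,2)$.

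The main obstacle will be this last verification for a completely arbitrary $\Gamma$: earlier the finiteness of $\Sing V_A$ guaranteed, via the lemma on reducedness of the discriminant, that the conic-bundle discriminant was a reduced sextic, whereas here $V_\Gamma$ may be singular along curves and $\Gamma$ may carry further singularities. I would therefore establish the set-theoretic equality $\{\det=0\}=\Gamma$ directly from $\pi_2(C)=\Gamma$ and then use the degree identity $3b=6$ to force reducedness and the bidegree simultaneously. A secondary point is that $\pi_1\times\pi_2$ is birational onto a genuine threefold: on a general fibre $\mathbb P^1\times\{t\}$ the net restricts to the complete $|\mathcal O_{\mathbb P^1}(2)|$, so $\pi_1$ is the Veronese (conic) embedding there and is injective, while $\pi_2$ separates the fibres; hence $\pi_1\times\pi_2$ is generically injective, consistently with $\dim|\mathcal J_C(2,3)|=8$.
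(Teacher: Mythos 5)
Your proposal is correct and takes essentially the same route as the paper: the theorem is proved inside Remark \ref{determ} precisely by running the Section 7 reconstruction backwards from $\Gamma$ --- the pencil of conics through the four singular points gives $q$, the birational embedding $q \times id_{\mathbb P^2}$ has image the quintic Del Pezzo $Y_h$ of bidegree $(1,2)$, the strict transform $C$ is a quadratic section of $Y_h$, and the product map $\pi_1 \times \pi_2$ of \ref{reconstruct} yields the $(2,2)$-threefold containing the plane $P_h$ with $\Gamma$ as discriminant. If anything you supply more detail than the paper, which dismisses the verification with ``clearly this construction always works''; your flagged point about pinning down the bidegree and the divisor-level (rather than merely set-theoretic) identity of the discriminant with the reduced sextic $\Gamma$ is a genuine gap in the paper's own write-up, and your sketch for closing it is sound.
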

\end{rem}
As in remark \ref{Wirtinger},  $\pi$ defines a double cover $g: \tilde \Gamma \to \Gamma$ which is split over $\Gamma$. If $\Gamma$ is nodal $\pi$ is a Wirtinger cover. Hence $\tilde \Gamma$ is the
gluing, according to the prescriptions,  of  two copies of the partial normalization of $\Gamma$ at the above mentioned four nodes.
\section{Geometry of Morin-Del Pezzo configurations}
Now we describe the truly geometric construction of a Morin-Del Pezzo configuration like $F$.
We infer that such configurations form an irreducible family
 It turns out that $F$ is determined by the curve $C$ and $\Sing C$ as follows. Let
$\nu: C^n \to C$ be the normalization map, then $\Sing C$ is defined by the exact sequence
$$
0 \to \mathcal O_C \stackrel{\nu^*} \to \mathcal O_{C^n} \to \mathcal O_{\Sing C} \to 0
$$
as usual. Restricting to $F'$ the commutative diagram of linear maps \ref{factor p_h}, we obtain
\begin{equation} 
\xymatrix{ & & & { F' } \ar[dl]_{p_{\vert F'}} \ar[dr]^{\overline r_{ \vert F'}} & \\
 & &   { \mathbb P^8}  \ar@{>}[rr]^{p_h} & &   {\mathbb P^5. }  }
 \end{equation}
Here $\overline r_{\vert F'}$ is an embedding by lemma \ref{embed} and $p_{\vert F'}$ embeds $F'$ in $U := V_A-P_h$. Hence $F'$ is biregular to $p(F')$ and $p_h$ embeds $p(F')$ in $\mathbb P^5$. On the other hand let $R \subset V_A$ be the ramification scheme of $\pi: V_A \to \mathbb P^2$. Then $\sigma^*R$ is contained in the fundamental divisor $Z$ of $\tilde p_A: \tilde V_A \to \mathbb P^1 \times \mathbb P^2$. More precisely we have $\tilde p_A(Z) = C$ so that $\tilde p_A: Z \to C$ is a $\mathbb P^1$-bundle. Then $\sigma^*R$ is a birational section of it cutting on
$F \cdot U$ the locus of the singular points of the singular fibres of $\pi$. Then  theorem \ref{main} implies that
\begin{equation}
p(F') = U \cdot \Sing R.
\end{equation}
  Since $\sigma^{-1}: U \to \tilde V_A$ is an open embedding and $p_h \vert U = \tilde p_A \circ \sigma^{-1}\vert U$, it follows:
\begin{lemm}  The rational map $\overline r$ embeds $F'$ in $\Sing C$. \end{lemm}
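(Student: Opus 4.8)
The plan is to combine the two facts already established: that $\overline r$ is injective on $F'$ (lemma \ref{embed}), and that $\overline r \vert \mathbb P(A) = p_h \circ p$ factors as in diagram \ref{factor p_h}. Granting these, it suffices to check that the image $\overline r(F') = p_h(p(F'))$ lies in $\Sing C$; injectivity then upgrades this to an embedding. Since by theorem \ref{main} we have $p(F') = U \cdot \Sing R$ with $U = V_A - P_h$, and since $p_h \vert U = \tilde p_A \circ \sigma^{-1}\vert U$ with $\sigma^{-1}\colon U \to \tilde V_A$ an open embedding, I would rewrite the image as $\overline r(F') = \tilde p_A(\sigma^{-1}(\Sing R \cap U))$ and reduce everything to a statement inside the $\mathbb P^1$-bundle $\tilde p_A\colon Z \to C$.

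First I would record that $\sigma^{-1}$ carries the singular points of $R$ lying on $U$ into the singular locus of $\sigma^* R$: because $\sigma$ restricts to an isomorphism $\tilde V_A - E_h \to U$, it identifies $\Sing R \cap U$ with $\Sing(\sigma^* R) \cap (\tilde V_A - E_h)$. Hence $\sigma^{-1}(p(F')) \subseteq \Sing(\sigma^* R)$, and the lemma follows once I prove
\[
\tilde p_A\bigl(\Sing(\sigma^* R)\bigr) \subseteq \Sing C.
\]

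The heart of the matter is the local structure of the birational section $\sigma^* R$ of the $\mathbb P^1$-bundle $\tilde p_A\colon Z \to C$. Over the smooth locus $C - \Sing C$ the total space $Z$ is a smooth surface and $\tilde p_A$ restricts to a $\mathbb P^1$-bundle over a smooth curve; the restriction of $\sigma^* R$ there is finite and birational onto the normal curve $C - \Sing C$, hence an isomorphism by normality. In particular $\sigma^* R$ is smooth over $C - \Sing C$, so every one of its singular points lies over $\Sing C$, which is exactly the displayed inclusion. Feeding this back gives $\overline r(F') = \tilde p_A(\sigma^{-1}(p(F'))) \subseteq \Sing C$, and together with lemma \ref{embed} this shows that $\overline r$ embeds $F'$ into $\Sing C$.

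The step I expect to be delicate is the finiteness of $\tilde p_A$ on $\sigma^* R$ over $C - \Sing C$, i.e.\ that no fibre $\mathbb P^1$ of $Z \to C$ is absorbed into the ramification curve over a smooth point of $C$. This is precisely what makes the \emph{finite birational onto a normal base} argument applicable and forces the section to be smooth away from $\Sing C$; geometrically it amounts to the ramification of the conic bundle $\pi$ meeting each degenerate conic fibre in only finitely many points, which is the content of the birational-section description recorded just above the statement. Once finiteness is secured, the normality argument is automatic and the inclusion into $\Sing C$ is immediate.
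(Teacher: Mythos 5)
Your proposal follows the paper's own route step for step: injectivity from lemma \ref{embed}, the factorization $\overline r_{\vert \mathbb P(A)} = p_h \circ p$, the identity $p(F') = U \cdot \Sing R$ from theorem \ref{main}, the open embedding $\sigma^{-1}\colon U \to \tilde V_A$, and the structure of $\sigma^* R$ as a birational section of the $\mathbb P^1$-bundle $\tilde p_A\colon Z \to C$. The only difference is that you spell out, via the finite-birational-onto-a-normal-curve argument, the final inference the paper compresses into ``it follows'' (including flagging that no fibre of $Z \to C$ over a smooth point of $C$ may lie in $\sigma^* R$, which is exactly what the paper's birational-section description is invoked for), so this is a faithful filling-in of the same proof rather than a different approach.
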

Let $F_h := \overline r (F') \subset \mathbb P^5$, the next lemma will be useful.
\begin{lemm}  \label{spans} $h^0(\mathcal I_{F_h}(1)) = 0$ that is $F_h$ spans $\mathbb P^5$. \end{lemm}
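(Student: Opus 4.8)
\section*{Proof proposal for Lemma \ref{spans}}

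The plan is to read off the statement from the fact that $\overline r$ restricted to $\mathbb P(A)$ is a linear projection whose centre is exactly the $3$-plane $\langle h\rangle$, combined with the completeness of $F$, which forces $\langle F\rangle=\mathbb P(A)=\mathbb P^9$. In other words, spanning of the image $F_h=\overline r(F')$ should be a formal consequence of spanning of the whole configuration $F=h\cup F'$ in $\mathbb P(A)$.

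First I would assemble the three ingredients already at hand. By the relation $\langle h\rangle=\mathbb P(A)\cap\mathbb P(\wedge^3W_Y)$ established above, the indeterminacy locus of the restriction $\overline r_{\vert\mathbb P(A)}$ is precisely $\langle h\rangle$, so this restriction is the linear projection $\mathbb P(A)\to\mathbb P(H_Y)=\mathbb P^5$ with centre the $3$-plane $\langle h\rangle$. Next, since $F=\mathbb P(A)\cdot\mathbb G$ is a Morin configuration it is complete, whence $\langle F\rangle=\mathbb P(A)=\mathbb P^9$. Finally $F=h\cup F'$, and $F'$ is disjoint from the centre of the projection, because $F'$ is supported on $\mathbb P(\wedge^3W)-\mathbb P(\wedge^3W_Y)$ while $\langle h\rangle\subset\mathbb P(\wedge^3W_Y)$.

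Then I would argue by contradiction. Suppose $h^0(\mathcal I_{F_h}(1))>0$, i.e.\ $F_h=\overline r(F')$ lies in some hyperplane $H\subset\mathbb P^5$. Pulling $H$ back along the linear projection $\overline r_{\vert\mathbb P(A)}$ produces a hyperplane $\tilde H\subset\mathbb P(A)$ which, being the zero locus of a linear form vanishing on the centre, automatically contains $\langle h\rangle$. Since $F'$ avoids the centre and $\overline r$ is an embedding on $F'$ by Lemma \ref{embed}, the inclusion $\overline r(F')\subset H$ yields $F'\subset\tilde H$. As $h\subset\langle h\rangle\subset\tilde H$ as well, we get $F=h\cup F'\subset\tilde H$, hence $\langle F\rangle\subset\tilde H\subsetneq\mathbb P(A)$, contradicting $\langle F\rangle=\mathbb P(A)$. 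Therefore no such $H$ exists and $F_h$ spans $\mathbb P^5$. Equivalently, and more transparently, the image of $F'$ under projection from $\langle h\rangle$ spans the target iff $\langle F'\cup\langle h\rangle\rangle=\mathbb P(A)$, and this holds because $\langle F'\cup\langle h\rangle\rangle\supseteq\langle F'\cup h\rangle=\langle F\rangle=\mathbb P(A)$.

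The one point that needs genuine care, and which I would treat as the (modest) main obstacle, is the scheme-theoretic bookkeeping in the contradiction step: transferring ``contained in a hyperplane'' correctly through the projection for the possibly non-reduced finite scheme $F'$. This is exactly where the embedding statement of Lemma \ref{embed} and the disjointness of $F'$ from the centre $\langle h\rangle$ are used, guaranteeing that $\overline r(F')\subset H$ forces $F'\subset\tilde H$ as subschemes and not merely at the level of supports.
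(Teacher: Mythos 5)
Your proof is correct and is essentially the paper's own argument: the paper likewise deduces $\langle F\rangle=\mathbb P(A)$ from completeness of $F=F'\cup h$ and derives the contradiction by pulling back a putative hyperplane $L\supset F_h$ along $\overline r_{\vert\mathbb P(A)}$, obtaining a hyperplane of $\mathbb P(A)$ containing $\langle F\rangle$. Your added scheme-theoretic bookkeeping (noting that only regularity of $\overline r$ on $F'$, i.e.\ disjointness of $F'$ from the centre $\langle h\rangle$, is really needed to transfer the inclusion, rather than the full embedding statement of Lemma \ref{embed}) merely makes explicit a detail the paper leaves implicit.
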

\begin{proof} We have $\langle F \rangle = \mathbb P(A)$ since $F = F' \cup h$ is complete. Moreover $\overline r_{\vert F'}: F' \to \mathbb P^5$ is an embedding. Assume
$h^0(\mathcal I_{F_h}(1)) \geq 1$, then $F_h$ is contained in a hyperplane $L$. But then  the pull-back of $L$ by $\overline r_{\vert _{\mathbb P(A)}}$ contains $\langle F \rangle$: a contradiction.
\end{proof}
Now we assume $W = H^0(\mathcal I_C(2))$ for our usual vector space $W$ and that the inclusion of $\mathbb H = \vert \mathcal I_Y(2) \vert$ in $\mathbb P(W)$ is induced by the standard exact sequence of global sections 
$$
0 \to H^0(\mathcal I_{Y_h}(2)) \to H^0(\mathcal I_C(2)) \to H^0(\mathcal O_{Y_h}) \to 0.
$$
As already remarked  this is not restrictive up to projective equivalence. As in the proof of lemma \ref{crucial} let $\mathcal S = \Sym^2 T_{\mathbb P^5}(-1)$. Then $\mathcal S^*_o \subset H^0(\mathcal O_{\mathbb P^5}(2))$ is the space of quadratic forms singular at $o \in \mathbb P^5$ and this inclusion defines a monomorphism
$$
\upsilon: \mathcal S^* \to H^0(\mathcal O_{\mathbb P^5}(2)) \otimes \mathcal O_{\mathbb P^5}.
$$
Restricting $\upsilon$ to $\Sing C$ we then construct the Cartesian square
\begin{equation}
\begin{CD}
\mathcal N @>>> W \otimes \mathcal O_{\Sing C} \\
@VVV @VVV \\
{\mathcal S^*_{\Sing C}} @>>> {H^0(\mathcal O_{\mathbb P^5}(2)) \otimes \mathcal O_{\Sing C}}. \\
\end{CD}
\end{equation}
$\mathcal N$ is a rank $3$ vector bundle over the finite scheme $\Sing C$. Indeed, we have
\begin{equation}
\mathcal N_o = H^0(\mathcal I_C(2)) \cap H^0(\mathcal I^2_o(2)) 
\end{equation}
and we know that $L_o := H^0(\mathcal I_{Y_h}(2)) \cap H^0(\mathcal I^2_o(2))$ has dimension $2$. Since $C$ is a quadratic section of $Y_h$ singular at $o$, the above exact sequence
implies $\dim \mathcal N_o = 3$. Let $N_o := \mathbb P(\mathcal N_o)$, then $N_o$ is the net of quadrics through $C$ singular at $o$.
In particular it is clear that the map associated to $\mathcal N$ is the embedding sending $o$ to $N_o$, say
\begin{equation}
f_N: \Sing C \to \mathbb G.
\end{equation}
Now $\overline r: \mathbb G \to \mathbb G^*_{\mathbb H}$ associates to $N_o$ the pencil $\mathbb P(L_o)$. Moreover $Y_h$ is the locus in $\mathbb G^*_{\mathbb H}$ of the pencils of quadrics $\mathbb P(L_o)$, singular at $o \in Y_h$ and containing $Y_h$, see \ref{H_Y} and remark \ref{ triangle}. Hence it follows $\overline r \circ f_N = id_{\Sing C}$ and therefore we have
\begin{equation}
\label {id_Sing C} f_N(\Sing C) \subseteq F'.
\end{equation}
Since $F = h \cup F'$ is by assumption the Morin configuration defined by $A$, we have $\langle F \rangle = \mathbb P(A)$ and $F = \mathbb P(A) \cdot \mathbb G$. This implies that 
$F' = f_N(\Sing C)$. \medskip \par
After these remarks we can describe explicitly Morin-Del Pezzo configurations and construct an irreducible family which includes all these configurations. To this purpose we invert now the 
previous construction and start from a reduced $C \in \vert \mathcal O_{Y_h}(2) \vert$ such that $\Sing C$ spans $\mathbb P^5$. We define the embedding  $f_N: \Sing C \to \mathbb G$ as above and set: \begin{equation}  F' := f_N(\Sing C) \  \text{and} \  \mathbb P(A) := \langle h \cup F' \rangle. \end{equation}
\begin{thm} $A$ is maximal isotropic. \end{thm}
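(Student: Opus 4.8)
The plan is to verify the two defining properties of a maximal (Lagrangian) subspace separately: that $A$ is isotropic for $w$, and that $\dim A=10$. Since any isotropic subspace of dimension $10$ in the $20$-dimensional symplectic space $(\wedge^3 W,w)$ is automatically maximal, these two facts suffice, and I would arrange the argument so that the dimension count follows formally once isotropy is known. Indeed, with $A$ isotropic and $[h]\subseteq A$, the results of section~6 give $r(A)\subseteq H_Y$ together with the equality $r(A)=(A\cap\wedge^3 W_Y)^{\perp}$ of \ref{equal}. On the other hand $r(A)=r(\langle F'\rangle)=\langle \iota(o)\mid o\in\Sing C\rangle$, because $r$ kills $\langle h\rangle\subseteq\wedge^3 W_Y$ while $r(f_N(o))=\iota(o)$; and since $\iota$ is the anticanonical embedding of $Y_h$ onto a linear section of $\mathbb G_{\mathbb H^*}$ spanning $\mathbb P(H_Y)$ (Proposition \ref{crucial}), the hypothesis that $\Sing C$ spans $\mathbb P^5$ forces $\langle \iota(\Sing C)\rangle=H_Y$. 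Hence $r(A)=H_Y$, so $\dim r(A)=6$; and as $H_Y=[h]^{\perp}$ under the perfect pairing $\wedge^3 W_Y\times\wedge^2 W_Y\to\wedge^5 W_Y$, the equality $(A\cap\wedge^3 W_Y)^{\perp}=H_Y$ gives $A\cap\wedge^3 W_Y=[h]$, of dimension $4$. Therefore $\dim A=\dim(A\cap\wedge^3 W_Y)+\dim r(A)=4+6=10$.

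Thus the whole statement reduces to isotropy, i.e.\ to $w(v,v')=0$ for $v,v'$ ranging over the generators $s_1,\dots,s_5$ of $[h]$ and the vectors $f_N(o)$, $o\in\Sing C$; equivalently, to pairwise incidence in $\mathbb P(W)$ of the corresponding planes. For two of the $s_i$ there is nothing to prove, since $[h]\subseteq\wedge^3 W_Y$ and $\wedge^3 W_Y$ is isotropic. For a pair $(h_i,f_N(o))$ I would use that $\overline r(f_N(o))=\iota(o)$ means $N_o\cap\mathbb H=\mathbb P(L_o)$, so that the plane $P_i\subseteq\mathbb H$ meets $N_o$ exactly along $P_i\cap L_o$; the section~5 lemma (the quadric through $\Sigma_i$ singular at $o$, obtained by pulling back $\pi_o(\Sigma_i)$) shows $P_i\cap L_o\neq\emptyset$, whence $w(h_i,f_N(o))=0$.

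The real content, and the main obstacle, is the incidence $N_o\cap N_{o'}\neq\emptyset$ for two nodes $o,o'$ of $C$: equivalently, the existence of a quadric through $C$ singular at both $o$ and $o'$. A crude dimension count predicts no such quadric, so one must exploit $C\subseteq Y_h$. My plan is to project from the line $\ell=\langle o,o'\rangle$: a quadric of $\mathbb P(W)$ singular at $o$ and $o'$ is a cone whose vertex contains $\ell$, hence of the form $\pi_\ell^{*}\overline Q$ for a quadric $\overline Q\subseteq\mathbb P^3$, and it contains $C$ exactly when $\overline Q\supseteq\pi_\ell(C)$. Now $\pi_\ell$ restricted to $Y_h$ is the blow-up of $o,o'$ followed by the re-embedding by $|{-}K_{Y_h}-E_o-E_{o'}|$, so $\pi_\ell(Y_h)$ is a cubic del Pezzo surface $S_3\subseteq\mathbb P^3$ with $\mathcal O_{S_3}(1)={-}K_{Y_h}-E_o-E_{o'}$. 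Since $C\in|{-}2K_{Y_h}|$ has multiplicity $2$ at the nodes $o,o'$, its strict transform lies in $|{-}2K_{Y_h}-2E_o-2E_{o'}|=|2\,\mathcal O_{S_3}(1)|$; that is, $\pi_\ell(C)=S_3\cap\overline Q$ is a quadric section of $S_3$. Then $\pi_\ell^{*}\overline Q$ is a quadric through $C$ singular along $\ell$, giving the required point of $N_o\cap N_{o'}$.

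The delicate points I expect to absorb most of the work are exactly in this last step: one must check that $\ell$ is an honest secant of $Y_h$ with $\pi_\ell|_{Y_h}$ birational (treating separately the cases where $\ell$ lies on $Y_h$, is tangent, or is a trisecant, and where $C$ is reducible with $o,o'$ on special components), and that the class computation survives these degenerations. A convenient way to handle the whole family at once is to observe that isotropy of $A$ is a closed condition on the irreducible family of admissible curves $C$; it then suffices to prove $N_o\cap N_{o'}\neq\emptyset$ for a general $C$ (for instance with $\Sing C$ six general nodes spanning $\mathbb P^5$), where the secant through two nodes is generic and the projection argument applies without degeneration. Combining this with the dimension count of the first paragraph shows $A\in LG(10,\wedge^3 W)$, so $A$ is maximal isotropic.
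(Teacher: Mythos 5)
Your main line is essentially the paper's own proof. The dimension count is the same: isotropy gives $\dim A\leq 10$, while $[h]\subseteq\Ker(r\vert A)$ and the hypothesis that $\Sing C$ spans $\mathbb P^5$ give $\dim r(A)=6$, whence $\dim A=10$. (One caution: the equality \ref{equal} is only available once $\dim A=10$ is known, so quoting it here is circular as written; but this is harmless, since the inclusion $r(A)\subseteq (A\cap\wedge^3W_Y)^{\perp}$, valid for every isotropic $A$ containing $[h]$, already yields $A\cap\wedge^3W_Y=[h]$ and the count $4+6=10$.) The orthogonality $w(s_i,f_N(o))=0$ via the section-5 incidence $P_i\cap L_o\neq\emptyset$ is the paper's, and your central construction — projecting from $\ell=\langle o,o'\rangle$, recognizing the image of $Y_h$ as a cubic surface and the image of $C$ as a quadric section of it, then pulling the quadric back to a rank-$4$ quadric singular along $\ell$ and through $C$, hence a point of $N_o\cap N_{o'}$ — is exactly the paper's key step; the paper phrases the existence of the quadric by observing that $\beta(C)$ is a nodal canonical curve of genus $4$ in $\mathbb P^3$, so it lies on a quadric, while you compute the class $-2K_{Y_h}-2E_o-2E_{o'}=2\mathcal O_{S_3}(1)$ on the blow-up; these are equivalent.

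Where you diverge is in handling the degenerate cases, and your proposed shortcut has a genuine flaw. Isotropy of $A_C$ is indeed a closed condition, but you cannot reduce to a general $C\in\mathcal C$: the singular points of a special curve are not limits of singular points of nearby general members of $\mathcal C$ (a general member has six nodes, while the curves one ultimately needs, such as $C_{\ell}$, have fifteen), so $A_{C_0}=[h\cup f_N(\Sing C_0)]$ need not sit inside the limit of the spaces $A_{C_t}$, and vanishing of $w(f_N(o),f_N(o'))$ at a special triple $(C,o,o')$ does not follow from its vanishing at general $C$. What the closedness argument actually requires is irreducibility of the two-pointed incidence family $\lbrace (C,o,o') \ \vert \ o,o'\in\Sing C\rbrace$ with the given triple in the closure of the generic locus — plausible (blow up both points and invoke \cite{Te}, as the paper does for the one-pointed family $\mathcal S$), but not established by you or by the paper. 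The paper instead avoids generization altogether: it runs an induction over length-$k$ subschemes $S'\subseteq F'$, which in particular covers non-reduced $\Sing C$, where a length-two subscheme supported at one point contributes a tangent vector to $\mathbb G$ at $f_N(o)$, orthogonal to $f_N(o)$ because the embedded tangent spaces of $\mathbb G\subset\mathbb P(\wedge^3W)$ are isotropic. Finally, the case $\ell\subset Y_h$ that you flag as delicate is in fact immediate (and note that a trisecant of $Y_h$, being contained in all quadrics through $Y_h$, already lies on $Y_h$): $\ell$ is then one of the ten lines of $Y_h$, and the rank-$4$ quadric $Q_{ij}$ through $Y_h$ with $\Sing Q_{ij}=\ell$ from section 5 lies in $N_o\cap N_{o'}$.
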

\begin{proof} Let $\ell$ be the length of $F'$. We show by induction on $0 \leq k \leq \ell$ that, for any subscheme $S' \subset F'$ of length $k$,  $\langle h \cup S' \rangle = \mathbb P(A_{S'})$
where $A_{S'}$ is isotropic. For $k = 0$ $h \cup S' = h$ and the statement is clearly true. Let $S'$ be of length $k+1$ then we observe that $S' $ is the biregular
image of $f_N(S)$, where $S$ is a subscheme of length $k$ of $\Sing C$. Let $o \in S$, then $f_N(o)$ is the parameter point of the net of quadrics $N_o$, containing $C$ and
singular at $o$. It is also clear that $\langle S' \rangle$ is spanned by the lines $\langle \zeta'_1 \rangle, \dots, \langle \zeta'_k \rangle$, where $\zeta'_i = f_N(\zeta_i)$ and  $\zeta_i$ denotes 
a subscheme of length $2$ of $S$ containing the point $o$. If $o_i \in \zeta_i$ and $o_i \neq o$ we denote by $n_i$ a vector defining the point $f_N(o_i)$. If $o = o_i$ then $n_i$ denotes a
non zero tangent vector to $\mathbb G$ at $f_N(o)$ defining the tangent line $\langle \zeta'_i \rangle$. Finally let $n$ be a vector defining $f_N(o)$ and let $s_1 \dots s_5$ be vectors 
respectively defining $h_1 \dots h_5$. Then $A_{S'}$ is generated by $s_1, \dots, s_5, n, n_1, \dots, n_k$. By induction $s_1, \dots, s_5, n_1, \dots, n_k$ generate an isotropic space. Moreover  $n$ is isotropic. Hence $A_S$ is isotropic if $$ w(n,s_i) = w(n,n_j) = 0 $$
 for $i = 1 \dots k$ and $j = 1 \dots 5$. Since the tangent space to $\mathbb G$ at
any point is isotropic, we have $w(n,n_j) = 0$ for every $n_j$ such that $\langle \zeta'_j \rangle$ is tangent to $\mathbb G$ at $o$. Otherwise we have $o \neq o_j$ and we are left to show that $w(n,n_j) = 0$.
To prove this we argue as follows, leaving some details to te reader. Let $N_o$ and $N_j$ respectively be the net of singular quadrics defined by $o$ and $o_j$ as above. To prove $w(n, n_j) = 0$ it suffices to show that $N_o \cap N_j$ is non empty. Let $\beta: Y_h \to \mathbb P^3$ be the projection from the line $b := \langle o o_j \rangle$. If $b$ is not in $Y_h$ then $\beta (Y_h)$ is an integral cubic surface. Moreover $\beta(C)$ is a $4$-nodal canonical curve. In particular it follows that $\beta(C) = \overline Q \cdot \beta(Y)$, where $\overline Q$ is a quadric surface. Let $Q = \beta^* \overline Q$, then $Q$ is a quadric of rank $4$, singular along the line $\langle o o_j \rangle$ and contains $C$. Hence we have $Q \in N_o \cap N_i$. Finally it is clear from section 5 that $w(n, s_i) = 0$. This shows by inudction  that $\langle h \cup F' \rangle$ is the projectivization of an isotropic space.   Since it is isotropic $A$ has dimension $\leq 10$. On the other hand the assumption that $\Sing C$ spans $\mathbb P^5$ implies that $r(A)$ is $6$-dimensional. Since $[h] \subseteq \Ker r \vert A$, it follows $\dim A = 10$. \end{proof}
 In what follows we will denote by $\mathcal C$ the family of curves like $C$, that is
 \begin{equation}
 \mathcal C := \lbrace C \in \vert \mathcal O_{Y_h}(2) \vert \ \vert \ h^0(\mathcal I_{\Sing C}(1)) = 0 \ \text {and $C$ is reduced} \rbrace.
 \end{equation}
 Notice that then $\Sing C$ has length $\geq 6$. Now let $\mathcal V$ be the family of all reduced curves $D \in \vert \mathcal O_{Y_h}(2) \vert$ such that $\Sing D$ has length $\geq 6$, it is known that
 $\mathcal V$ is integral, \cite{Te}. Moreover, it is easy to see that a general $D$ in the family is an integral nodal curve such that $\Sing D$ consists of six nodes in general position in $Y_h \subset \mathbb P^5$. In particular the conditions defining $\mathcal C$ are open and not empty on $\mathcal V$, so that $\mathcal C$ is integral. In a similar way we can define and use the universal singular point over $\mathcal C$, that is the family
\begin{equation}
\mathcal S := \lbrace (C,o) \in \mathcal \mathcal C \times Y_h \ \vert \ o \in \Sing C \rbrace.
\end{equation}
Fixing $o \in Y_h$, let $\mathcal S_o$ the fibre the projection $\mathcal S \to Y_h$ and let $\sigma: Y_o \to Y_h$ be the blowing up of $o$. Then $Y_o$ is a quartic Del Pezzo surface. Moreover,
the strict transform by $\sigma$ of the family of curves $\mathcal S_o$ is just an open set in the variety $\mathcal V'$ of all antibicanonical curves $C' \subset Y_o$ which are reduced and such that $\Sing C'$ has length $\geq 5$. Again $\mathcal V'$ is known to be integral of constant dimension $7$, \cite{Te}. Hence the next lemma follows.
\begin{lemm} $\mathcal S$ and $\mathcal C$ are integral. \end{lemm}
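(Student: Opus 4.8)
The plan is to treat the two assertions separately, reducing each to Tannenbaum's results \cite{Te} on families of singular curves on (weak) del Pezzo surfaces. For $\mathcal C$ almost nothing remains to be done: by construction $\mathcal C$ is carved out of $\mathcal V$ by the two conditions that $C$ be reduced and that $h^0(\mathcal I_{\Sing C}(1))=0$, and both are \emph{open}. Since $\mathcal V$ is integral by \cite{Te}, and a general nodal member of $\mathcal V$ has its six nodes in general position in $Y_h\subset\mathbb P^5$ (so that they span $\mathbb P^5$), these open conditions hold on a non-empty, hence dense, locus. Therefore $\mathcal C$ is a non-empty open subscheme of the integral variety $\mathcal V$, and is integral.

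For $\mathcal S$ I would study the projection $p\colon\mathcal S\to Y_h$, $(C,o)\mapsto o$, whose fibre over $o$ is $\mathcal S_o=\{\,C\in\mathcal C\mid o\in\Sing C\,\}$, and identify it with a Severi-type family on the blow-up $\sigma\colon Y_o\to Y_h$ of the point $o$. Because $K_{Y_o}=\sigma^{*}K_{Y_h}+E$ and $\mathcal O_{Y_h}(1)=-K_{Y_h}$, one has
\[
-2K_{Y_o}=\sigma^{*}\mathcal O_{Y_h}(2)-2E,
\]
so the strict transform of a curve $C\in\mathcal S_o$, which passes through $o$ with multiplicity $\ge 2$, is a reduced member of the anti-bicanonical system $\lvert -2K_{Y_o}\rvert$, while the remaining singular points of $C$ become the $\ge 5$ singularities demanded of the transform. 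This realizes $\mathcal S_o$ as an open subset of the variety $\mathcal V'$ of reduced anti-bicanonical curves on $Y_o$ with at least five singular points. By \cite{Te}, $\mathcal V'$ is integral of constant dimension $7$; hence every fibre of $p$ is integral of dimension $7$, over the irreducible rational surface $Y_h$. Granting this, the total space should be integral of dimension $2+7=9$, matching the announced nine moduli and giving back $\dim\mathcal C=9$ since the fibres of $\mathcal S\to\mathcal C$ (namely $\Sing C$) are finite.

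To pass from irreducible equidimensional fibres over an irreducible base to integrality of $\mathcal S$, I would invoke the standard principle that a surjective morphism onto an irreducible variety all of whose fibres are integral of one fixed dimension $n$ has integral source of dimension $\dim Y_h+n$; reducedness of $\mathcal S$ then follows from the smoothness of the Severi fibres $\mathcal V'$ in \cite{Te}. The hard part will be the \emph{uniformity} of the fibre analysis over all of $Y_h$, which is exactly what is needed to exclude spurious vertical components of $\mathcal S$. Off the ten lines of $Y_h$ the surface $Y_o$ is a genuine quartic del Pezzo and \cite{Te} applies verbatim; but when $o$ lies on one of the ten lines, $Y_o$ acquires a $(-2)$-curve and is only a weak del Pezzo, so one must either extend the integrality and dimension count of $\mathcal V'$ to this case or argue that these loci are negligible.

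I expect the cleanest route is the second: the ten lines form a $1$-dimensional subset $\Gamma\subset Y_h$, so any hypothetical component of $\mathcal S$ supported over $\Gamma$ would have dimension at most $1+7=8<9$ and cannot be a separate component of the $9$-dimensional $\mathcal S$ once one knows $\mathcal S=\overline{p^{-1}(U)}$ for $U=Y_h\setminus\Gamma$. Thus I would first establish irreducibility of $p^{-1}(U)$ directly from the fibration principle above, and then show, via upper semicontinuity of fibre dimension together with the fact that the $7$-dimensional fibre over a special $o\in\Gamma$ is a limit of the fibres over $U$, that no component escapes this closure. Carrying out this limit argument carefully — equivalently, checking that the dominant component of $\mathcal S$ surjects onto $Y_h$ — is the one genuinely delicate point; everything else is the routine blow-up bookkeeping recorded above.
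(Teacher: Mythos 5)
Your argument is essentially the paper's own proof: $\mathcal C$ is carved out of the integral variety $\mathcal V$ of \cite{Te} by the open conditions of reducedness and $h^0(\mathcal I_{\Sing C}(1))=0$, and the fibre $\mathcal S_o$ of $\mathcal S\to Y_h$ is identified, via the blow-up $\sigma\colon Y_o\to Y_h$, with an open subset of the integral $7$-dimensional variety $\mathcal V'$ of reduced antibicanonical curves on $Y_o$ with at least five singular points, again by \cite{Te}, whence integrality of $\mathcal S$ by fibring over the irreducible surface $Y_h$. Your additional care about the locus where $o$ lies on one of the ten lines (so that $Y_o$ is only a weak del Pezzo) is a refinement rather than a divergence: the paper applies \cite{Te} uniformly, calling $Y_o$ a quartic del Pezzo for every $o$, and does not address this point at all.
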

Now, to globalize slightly, we fix our notation as follows. Let $C \in \mathcal C$, then we set $W_C := H^0(\mathcal I_C(2))$ and consider the rank $6$
vector bundle $ \pi: \mathcal W \to \mathcal C$, whose fibre at $C$ is $W_C$. Passing to wedge product, we have the Grassmann bundle
\begin{equation}
\mathcal G \subset \mathbb P(\wedge^3 \mathcal W) \stackrel{\wedge^3 \pi} \longrightarrow  \mathcal C,
\end{equation}
whose fibre $\mathcal G_C$ is the Grassmannian of planes of $\mathbb P(H^0(\mathcal I_C(2))$, and the $\mathbb P^9$-bundle
\begin{equation}
\mathcal P \subset  \mathbb P(\wedge^3 \mathcal W) \stackrel{\wedge^3 \pi} \longrightarrow  \mathcal C,
\end{equation}
whose fibre $\mathcal P_C$ is $\mathbb P(A_C)$ and $A_C \subset \wedge^3 W_C$ is the isotropic space $A$ as above. Let
\begin{defi} \it The universal Morin-Del Pezzo configuration over $\mathcal C$ is the closed set
\begin{equation}
\mathcal Z :=  \mathcal G \cap \mathcal P \subset \mathbb P(\wedge^3 \mathcal W).
\end{equation}
\end{defi}
Some comments now are due. Let $f: \mathcal S \to \mathcal P$ be the morphism defined by the assignment $(C,o) \to N_o$, where $N_o$ is a net of quadrics as above. It is clear that
\begin{equation}
\mathcal Z' := f(\mathcal S)
\end{equation}
is an irreducible component of $\mathcal Z$. $\mathcal Z$ contains as well the five irreducible components \begin{equation} \mathcal H_i := s_i(\mathcal C), \ i = 1 \dots 5, \end{equation} where $s_i: \mathcal C \to \mathcal P$ is the section such that $s_i(C) := h_i \in \mathbb H = \vert \mathcal I_{Y_h}(2) \vert \subset \mathcal P_C = \mathbb P(A_C)$. Let $\mathcal H := \cup \mathcal H_i$,
so far we have $\mathcal Z' \cup \mathcal H \subseteq \mathcal G \cap \mathcal P$. In the next theorem we show that the latter is an equality. Of course this implies that each fibre of the family
\begin{equation}
\wedge^3 \pi: \mathcal G \cap \mathcal P \longrightarrow \mathcal C
\end{equation}
is a finite and complete configuration of incident planes, in particular a Morin-Del Pezzo configuration. This completes our description of these configurations.
\begin{thm} $\mathcal G \cap \mathcal P = \mathcal H_1 \cup \dots \mathcal H_5 \cup \mathcal Z'$. \end{thm}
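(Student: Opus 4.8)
The plan is to deduce the global identity from its fibrewise version over the irreducible base $\mathcal{C}$. Both members are closed subsets of $\mathbb{P}(\wedge^3\mathcal{W})$ lying over $\mathcal{C}$: the left hand side $\mathcal{G}\cap\mathcal{P}$ has fibre $\mathbb{G}_C\cap\mathbb{P}(A_C)=F_C$ over $C$, the union $\mathcal{H}_1\cup\dots\cup\mathcal{H}_5$ has fibre the five points $h=\lbrace h_1,\dots,h_5\rbrace$, and $\mathcal{Z}'=f(\mathcal{S})$ has fibre $f_N(\Sing C)$, since $f$ is compatible with the projections to $\mathcal{C}$ and the fibre of $\mathcal{S}\to\mathcal{C}$ over $C$ is $\Sing C$. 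As the $s_i$ are sections and $\mathcal{S}\to\mathcal{C}$ is finite (so that $f(\mathcal{S})$ is closed), a fibrewise set equality will propagate to the asserted equality of total spaces. Thus everything reduces to proving, for each $C\in\mathcal{C}$, that $F_C=h\cup f_N(\Sing C)$.

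The inclusion $\supseteq$ is the easy half and is essentially already recorded: each $h_i$ is a plane of $\mathbb{H}\subset\mathbb{P}(W_C)$, hence a point of $\mathbb{G}_C$, and lies in $\mathbb{P}(A_C)$ because $A_C$ is Del Pezzo marked; each $N_o=f(C,o)$ is a net of quadrics through $C$, hence a point of $\mathbb{G}_C$, and lies in $\mathbb{P}(A_C)$ by \ref{id_Sing C}. For the reverse inclusion I would separate the points of $F_C$ according to the centre $\mathbb{P}(\wedge^3 W_Y)$ of $\overline r$. If $e\in F_C\cap\mathbb{P}(\wedge^3 W_Y)$, then, $A_C$ being maximal isotropic with $\Sing C$ spanning $\mathbb{P}^5$, one has $A_C\cap\wedge^3 W_Y=[h]$, so $e\in\langle h\rangle$; since $\mathbb{G}\cap\mathbb{P}(\wedge^3 W_Y)=\mathbb{G}_{\mathbb{H}}$ and $\langle h\rangle$ is a transversal $0$-dimensional section of the degree-$5$ Grassmannian $\mathbb{G}_{\mathbb{H}}$, we get $\langle h\rangle\cdot\mathbb{G}=h$, whence $e\in h$.

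The remaining points lie in $F':=F_C\setminus\mathbb{P}(\wedge^3 W_Y)$, and for these I would use that $\overline r$ and $f_N$ are mutually inverse. Once $F'$ is known to be finite, Lemma \ref{embed} makes $\overline r|_{F'}$ an embedding, the analysis of section $7$ together with Theorem \ref{main} shows its image lies in $\Sing C$, and the identity $\overline r\circ f_N=\mathrm{id}_{\Sing C}$ together with $f_N(\Sing C)\subseteq F'$ forces $f_N$ to be the inverse of $\overline r|_{F'}$, so that $F'=f_N(\Sing C)$. Combined with the previous paragraph this yields $F_C=h\cup f_N(\Sing C)$, finite, for every $C$, and hence the theorem and the finiteness and completeness of all fibres.

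The main obstacle is precisely the finiteness of $F'$, since the cited tools (Lemma \ref{embed}, Theorem \ref{main}, and the reconstruction of section $7$) are stated under a standing finiteness hypothesis. I would establish it before invoking those tools, via the associated threefold $V_{A_C}=\di(v_{A_C})$, which is defined for any isotropic $A_C$. The containment $P_h\subset V_{A_C}$ and the birational model $\tilde p_A\colon\tilde V_{A_C}\to\mathbb{P}^1\times\mathbb{P}^2$ of section $7$ reconstruct $V_{A_C}$ from the \emph{reduced} curve $C$ by contracting a $\mathbb{P}^1$-bundle over $C$; as $C$ is reduced, $\Sing C$ is finite, so $V_{A_C}$ acquires only isolated singularities, and by the pointwise characterisation \ref{char} of $\Sing V_{A_C}$ (which needs no finiteness) the set $\Sing V_{A_C}$ is finite and biregular to $F'$. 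Should disentangling the apparent circularity in the cited statements prove delicate, a fallback is to verify $F_C=h\cup f_N(\Sing C)$ directly for the general $6$-nodal member of the integral family $\mathcal{C}$ and then propagate it by upper semicontinuity of fibre dimension, using the pointwise identification above to exclude extra components over the special loci.
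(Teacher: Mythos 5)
Your overall skeleton — reduction to the fibrewise identity $F_C = h \cup f_N(\Sing C)$ over the irreducible base, the easy inclusion via \ref{id_Sing C}, and the disposal of points of $F_C \cap \mathbb P(\wedge^3 W_Y)$ through $A_C \cap \wedge^3 W_Y = [h]$ and the transversality $\langle h \rangle \cdot \mathbb G_{\mathbb H} = h$ — agrees with the paper, which compresses that first case into the remark that $z \notin h$ means the net $N$ is not contained in $\mathbb H$. The gap sits exactly where you flag it, and neither of your patches closes it. The tools you invoke for $F' = f_N(\Sing C)$ — Lemma \ref{embed}, Theorem \ref{main}, and the birational model $\tilde p_A \colon \tilde V_A \to \mathbb P^1 \times \mathbb P^2$ — are all established under the standing hypothesis that $F_A = \mathbb P(A) \cdot \mathbb G$ is \emph{finite}, and section 7 moreover assumes from the outset that $F$ is already a Morin--Del Pezzo configuration; in the theorem at hand these are the conclusions, since all that is known about $A_C$ is that it is maximal isotropic with $h \cup f_N(\Sing C) \subseteq \mathbb P(A_C) \cdot \mathbb G$. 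Your first patch is therefore circular: the assertion that $V_{A_C}$ ``is reconstructed from the reduced curve $C$ by contracting a $\mathbb P^1$-bundle over $C$,'' hence has isolated singularities, is itself an output of the section-7 analysis; without finiteness there is no identification of $\di(v_{A_C})$ with the threefold built from $C$ via $\pi_1 \times \pi_2$, and no a priori bound on $\Sing V_{A_C}$. (Even granting $\Sing V_{A_C}$ finite, deducing finiteness of $F'$ from the pointwise characterisation \ref{char} requires the projection from $u$ to be finite on $F'$, i.e.\ one must exclude lines of $\mathbb P(A_C) \cap \mathbb G$ through $u$ — the biregularity lemma of section 2 again assumes finiteness — and the standing smoothness condition $A \cap W^2_u = \wedge^3 U$ at $u = h_5$ is unverified for the constructed $A_C$.) Your fallback fails for the standard reason: upper semicontinuity allows the fibre dimension of $\mathcal G \cap \mathcal P \to \mathcal C$ to jump over closed loci, so generic finiteness over the $6$-nodal curves excludes neither positive-dimensional special fibres nor irreducible components of $\mathcal G \cap \mathcal P$ supported over special $C$ (for instance over $C_{\ell}$, the case of chief interest); the ``pointwise identification'' you would use to rule these out is precisely what is unavailable there.

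The paper's proof avoids finiteness altogether by classifying the fibre points directly. For $z$ in the fibre over $C$ with $z \notin h$, the corresponding net $N \subset \vert \mathcal I_C(2) \vert$ is not contained in $\mathbb H$, and the pencil $L := N \cap \mathbb H$ has class $\overline r(z) \in \mathbb P(H_Y) \cap \mathbb G_{\mathbb H^*} = Y_h$, so $L = L_v$ is the pencil of quadrics through $Y$ singular at some $v \in Y$, whose base scheme is a cone $B_v$ over a complete intersection of two quadrics; hence the base scheme of $N$ is $X = Q \cdot B_v$ with $Q \in N - L$. Isotropy of $A_C$ yields $N \cap N_o \neq \emptyset$ for every $o \in \Sing C$, whence $\Sing C \subset \Sing X$; then an analysis of the polar form $q_v$ of $v$ with respect to $Q$, using the hypothesis $h^0(\mathcal I_{\Sing C}(1)) = 0$ (in the other cases the ramification of the projection $\pi_v$ from $v$, respectively the tangent lines $\langle vp \rangle$, would place $\Sing C$ in the hyperplane $\lbrace q_v = 0 \rbrace$), forces $v \in \Sing C$ and $N = N_v$, i.e.\ $z \in \mathcal Z'$. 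Finiteness and completeness of all the fibres then come out as corollaries — the opposite logical order to your plan. To salvage your route you would first have to prove finiteness of $\mathbb P(A_C) \cdot \mathbb G$ for \emph{every} $C \in \mathcal C$ by an argument independent of sections 2 and 7; the paper's direct quadric-geometry argument is, in effect, that missing proof.
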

To prove the theorem we proceed at follows. Let $z \in \mathcal G \cap \mathcal P$ be a point in the fibre over $C \in \mathcal C$. Then $z$ is the parameter point of a net of quadrics
$N \subset \vert \mathcal I_C(2) \vert$ and we have to show that $z \in \mathcal Z' \cup \mathcal H$. If $z$ is in $h$ then there is nothing to show. Hence we can assume that $z$ is
not in $h$, in other words that $N$ is not in the hyperplane $\mathbb H$ of quadrics through $Y_h$. Then $L := N \cap \mathbb H$ is a pencil of quadrics singular at some point 
$v \in Y_h$. Its base scheme is a cone $B_v$ of vertex $v$ over an integral complete intersection of two quadrics in $\mathbb P^4$, see \ref{Y_u}. Hence the base scheme of $N$ is  
 \begin{equation} X =  Q \cdot B_v \end{equation}
where $Q \in N - L$. In particular it is clear that  $X \cdot Y = Q \cdot Y = C$.
 \begin{lemm} $\Sing C \subset \Sing X$. \end{lemm}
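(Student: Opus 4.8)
The plan is to prove the inclusion pointwise, using only the isotropy of $A$ and not the cone description $X = Q\cdot B_v$. First I would record that $C \subset X$: every quadric of the net $N$ vanishes on $C$, so $C$ is contained in the base scheme $X$ of $N$. Hence it suffices to fix an arbitrary $o \in \Sing C$ and show $o \in \Sing X$.

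Next I would reintroduce the net $N_o := f_N(o)$ of quadrics through $C$ that are singular at $o$. This is a genuine plane of $\mathbb P(W)$ because $\mathcal N_o = H^0(\mathcal I_C(2)) \cap H^0(\mathcal I_o^2(2))$ has dimension $3$, and by the very definition $F' = f_N(\Sing C)$ one has $N_o \in F' \subset \mathbb P(A)$. Thus $N$ and $N_o$ are two points of $\mathbb P(A)$, and since $A$ is isotropic for $w$ (the theorem just proved), their Plücker vectors $s_N, s_{N_o} \in A$ satisfy $w(s_N,s_{N_o}) = s_N \wedge s_{N_o} = 0$ in $\wedge^6 W$.

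The decisive step is to turn this vanishing into incidence. Both $s_N$ and $s_{N_o}$ are decomposable, being genuine points of $\mathbb G$, and for decomposable vectors $s_N \wedge s_{N_o} = 0$ holds exactly when the two $3$-dimensional subspaces of $W$ they span meet nontrivially, i.e.\ exactly when the planes $N$ and $N_o$ of $\mathbb P(W)$ are incident. Therefore there exists a quadric $Q_0 \in N \cap N_o$: as a member of $N_o$ it is singular at $o$, and as a member of $N$ it is one of the equations cutting out $X$. I would then finish by a Jacobian count: writing $N = \langle Q_1,Q_2,Q_3 \rangle$ with $Q_1 = Q_0$, the surface $X = \{Q_1 = Q_2 = Q_3 = 0\}$ is of pure dimension $2$ at $o$ (it is the proper intersection $Q\cdot B_v$), while $dQ_0(o) = 0$ forces $dQ_1(o),dQ_2(o),dQ_3(o)$ to span at most a $2$-dimensional space, so $\dim T_{X,o} \geq 3 > 2 = \dim_o X$ and $o \in \Sing X$. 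Since $o$ was arbitrary, $\Sing C \subset \Sing X$.

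The conceptual crux, and the step I would guard most carefully, is the incidence translation: recognizing that isotropy of $A$ is precisely the statement that any two planes parametrized in $\mathbb P(A)$ are incident, and that this incidence produces a quadric of $N$ singular at $o$. The only technical points to check are the decomposability of $s_N,s_{N_o}$ and the properness of $X$ at $o$, both of which are available here; the cone structure of $X$ plays no role in this particular lemma.
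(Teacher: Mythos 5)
Your proof is correct, and its first half coincides with the paper's: the paper likewise obtains $N \cap N_o \neq \emptyset$ directly from the fact that $N$ defines a point of $\mathbb P(A)$ and $A$ is isotropic (you merely spell out the Pl\"ucker translation, which is worth making explicit, since $w(s_N,s_{N_o})=0$ implies incidence only because both vectors are decomposable). Where you genuinely diverge is the conclusion. The paper splits into cases according to whether the quadric $Q_o \in N \cap N_o$ lies in the hyperplane $\mathbb H$: if $Q_o \notin \mathbb H$ it rewrites $X = Q_o \cdot B_v$ and concludes at once; if $Q_o \in \mathbb H$, so that $Q_o$ belongs to the pencil $L$ and is singular along the whole line $E = \langle o v\rangle$, it runs a geometric argument (the projection of $Y$ from $E$ is a quadric, hence $E \subset Y$ and the cone $B_v$ is singular along $E$) to get $o \in X \cap \Sing B_v \subset \Sing X$. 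Your uniform Jacobian count replaces both cases at once: since the ideal cutting out $X$ depends only on the net $N$ and not on a chosen basis, you may take $Q_1 = Q_0$, whence $dQ_1(o)=0$ gives $\dim T_{X,o} \geq 3$; and purity of dimension $2$ at $o$ holds because $B_v$ is integral (a cone over the integral quartic $Y_v$) and $Q \in N - L$ cannot contain $B_v$, as it would then contain $Y$ and lie in $\mathbb H$. This buys a shorter, case-free argument that also quietly absorbs the point $o = v$, which the paper excludes at the outset (``we can assume $o \neq v$''); what the paper's route buys instead is the extra geometric information in the hard case, namely that $\langle o v\rangle$ is one of the lines of $Y$ along which $B_v$ is singular, which fits the paper's overall geometric description but is not needed later. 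The two technical points you flag, decomposability of $s_N, s_{N_o}$ and properness of $X$ at $o$, are indeed exactly the ones requiring verification, and both hold in this setup.
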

\begin{proof} Let $o \in \Sing C$, we can assume $o \neq v$. Since $N$ defines a point of $\mathbb P(A)$ and $A$ is isotropic we have $N \cap N_o \neq \emptyset$. Hence there exists a quadric 
$Q_o \in N$ which is singular at $o$. If $Q_o$ is not in $\mathbb H$ then $X = Q_o \cdot B_v$ and $o \in \Sing X$.  If $Q_o$ is in $\mathbb H$ then $Q_o \in B_v$. In this case 
$\Sing Q_o$ contains $z, v$ and the line $E := \langle ov \rangle$. Let $\pi_E: Y \to \mathbb P^3$ be the projection from $E$ then $\pi_E(Y)$ is a quadric. Moreover, it is easy to deduce that then $E \subset Y$ and that the cone $B_v$ is singular along $E$. Hence we have $o \in X \cap \Sing B_v \subset  \Sing X$.  \end{proof}
\begin{lemm} Let $v$ be as above then $v \in \Sing C$. \end{lemm}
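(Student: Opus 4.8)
The plan is to prove the statement by projecting $C$ from the vertex $v$ and comparing degrees, after first locating $v$ on $C$ itself. Recall that $v\in Y_h$ and that $C=Q\cdot Y_h$ for any $Q\in N-L$; since $Y_h$ is smooth at $v$, the assertion $v\in\Sing C$ is equivalent to the conjunction $v\in C$ and $\mult_v C\ge 2$. Moreover $v\in C$ is itself equivalent to $v$ lying in the base scheme $X=Q\cdot B_v$ of the net $N$, i.e. to the vanishing at $v$ of every quadric of $N$: indeed the quadrics of the pencil $L$ are singular, hence vanish, at $v$, so $v\in X$ exactly when in addition $Q(v)=0$.

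The main step, and the step I expect to be the real obstacle, is to show $v\in C$. For this I would use that $[N]\in\mathbb P(A)$ with $A$ isotropic, so the point $z$ is incident to every point of the configuration $F=\mathbb P(A)\cdot\mathbb G$; as in the proof of the previous lemma this yields, for each $o\in\Sing C$, a quadric $Q_o\in N$ singular (hence vanishing) at $o$, and one must leverage this incidence, together with the cone structure of $B_v$, to force $v$ into the base locus $X$. Equivalently, I would invoke lemma \ref{embed}: since $z\in F'$ and $\overline r$ embeds $F'$ into $\Sing C$, while $\overline r(z)$ is precisely the pencil $L=N\cdot\mathbb H$, that is the point $v\in Y_h$, we obtain $v=\overline r(z)\in\Sing C$ at once. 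The delicate point is exactly this identification $\overline r(z)=v$ combined with the passage from ``$v$ in the base locus'' to ``$v$ a singular point''.

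Granting $v\in C$, the conclusion is a clean degree count under the projection $\pi_v$ from $v$. By the construction in \ref{Y_u}, $\pi_v$ maps $Y_h$ birationally onto the integral $(2,2)$-complete intersection $Y_v=\pi_v(Y_h)\subset\mathbb P^4$ that is the base of the cone $B_v$, contracting only the exceptional behaviour at $v$; in particular $\pi_v$ is birational on the curve $C\subset Y_h$. Because $v\in Q$, the image $\overline Q:=\pi_v(Q)$ is a quadric of $\mathbb P^4$, so $\pi_v(C)\subseteq\overline Q\cdot Y_v$, a curve of degree $8$. On the other hand $\deg C=10$ and, $\pi_v$ being birational on $C$, one has $\deg\pi_v(C)=10-\mult_v C$. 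Hence $10-\mult_v C\le 8$, whence $\mult_v C\ge 2$ and therefore $v\in\Sing C$, as claimed.
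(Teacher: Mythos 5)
There is a genuine gap, in fact two. First, the crux of the lemma is precisely the possibility that $v$ does \emph{not} lie on $C$ (equivalently $Q(v)\neq 0$, i.e.\ the polar form $q_v$ of $v$ with respect to $Q$ satisfies $q_v(v)\neq 0$), and your proposal never disposes of it. Your route (a) is only a plan (``one must leverage this incidence\dots''), and your route (b) is circular: the lemma asserting that $\overline r$ embeds $F'$ in $\Sing C$ is proved in the first half of section 8 under the standing assumption that $F=\mathbb P(A)\cdot\mathbb G$ is already a Morin--Del Pezzo configuration --- in particular that $F'$ is finite, which is the hypothesis of lemma \ref{embed}, and with the whole section-7 dictionary between $A$, $V_A$ and $C$ in place. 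But the present lemma sits inside the proof of the theorem $\mathcal G\cap\mathcal P=\mathcal H_1\cup\dots\cup\mathcal H_5\cup\mathcal Z'$, whose very purpose is to show that the space $A$ built from a given curve $C$ cuts $\mathbb G$ in a finite, complete configuration; at this stage neither the finiteness of $\mathbb P(A)\cdot\mathbb G$ nor the identification of the given $C$ with the curve reconstructed from $V_A$ is available. The paper instead argues directly: it trichotomizes on $q_v$, and in the problematic case $q_v(v)\neq 0$ it observes that $\pi_v:X\to\pi_v(X)$ is a finite double cover ramified along $X\cap\{q_v=0\}$, so $q_v$ vanishes on $\Sing X\supset\Sing C$, contradicting $h^0(\mathcal I_{\Sing C}(1))=0$; the case $q_v(v)=0$, $q_v\neq 0$ is handled by a similar contradiction via lines $\langle vp\rangle\subset Q$. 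Note that your proposal never invokes the spanning hypothesis on $\Sing C$ at all, whereas the paper uses it twice and essentially --- a strong sign the argument cannot close as written.

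Second, even granting $v\in C$, your degree count fails at the key point: if $v$ is a \emph{smooth} point of the quadric $Q$, the projection $\pi_v$ maps $Q$ birationally (stereographically) onto all of $\mathbb P^4$, so $\overline Q:=\pi_v(Q)$ is not a quadric and the containment $\pi_v(C)\subseteq\overline Q\cdot Y_v$ in a curve of degree $8$ evaporates; one only gets $\pi_v(C)\subset Y_v$, a degree-$4$ surface, which gives no bound forcing $\mult_v C\geq 2$. The image $\pi_v(Q)$ is a quadric of $\mathbb P^4$ only when $Q$ is a cone with vertex $v$ --- but in that case $Q$ is singular at $v$ and $v\in\Sing C$ follows immediately (this is the paper's trivial case $q_v=0$), so the degree count is either unavailable or unnecessary. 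The remaining pieces (birationality of $\pi_v$ on $C$, $\deg\pi_v(C)=10-\mult_v C$) are fine modulo routine checks, but they carry no weight once the two steps above collapse.
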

\begin{proof} Let $X= Q \cdot B_v$ and let $q_v \in H^0(\mathcal O_{\mathbb P^5}(1))$ be the polar form of $v$ with respect to $Q$. If $q_v = 0$ then $Q$ is singular at $v$ and the statement follows.
If $q_v(v) \neq 0$ then $v$ is not in $X$ nor in $Q$. In this case consider the projection $\pi_v: X \to \mathbb P^4$ from the vertex $v$ of $B_v$. Then $\pi_v$ is a finite double covering of $\pi_v(X)$, which is an integral complete intersection of two quadrics. Since $X = Q \cdot B_v$ the ramification divisor of $\pi_v$ is the hyperplane section of $X$ by $\lbrace q_v = 0 \rbrace$.  In particular $q_v$ vanishes on $\Sing X$. But then, by the previous lemma, $q_v$ vanishes on $\Sing C$. Since we are assuming $h^0(\mathcal I_{\Sing C}(1)) = 0$, we have a contradiction.  
If $q_v(v) = 0$ and $q_v \neq 0$ assume $v \notin \Sing C$ and observe that the line $\langle v p \rangle$ is in $Q$ for each $p \in \Sing C$. Indeed, $\langle v p \rangle$ is tangent to $Q$ at $v$ and contains $p$. Then $q_v$ vanishes in $\Sing C$ and the same contradiction follows. Hence $v \in \Sing C$. \end{proof}
The lemma implies that the net $N$, corresponding to $z \in \mathcal G \cap \mathcal P$, is the net $N_v$ of all quadrics through $C$ singular at $v$. Hence $z \in \mathcal Z'$ and
the proof of the theorem follows.
\begin{rem} \label {endrem} We  point out that, as a consequence of our description, a general Morin-Del Pezzo configuration is obtained from a nodal, integral canonical curve $C \subset Y_h$ with
exactly $6$ nodes. Notice also that $C \subset \mathbb P^1 \times \mathbb P^2$ so that its projection in $\mathbb P^2$ is a nodal sextic with $10$ nodes.
\end{rem}
\section{Morin configurations of higher length via canonical curves}
Finally we apply the previous results and constructions to deduce the uniqueness, up to projective equivalence in $\mathbb P^5$, of the finite Morin configuration having maximal cardinality $20$. 
We also outline the simple description of those families of configurations of length $k \geq 16$ having the one of maximal cardinality as a limit. We rely as previously on stable, highly singular canonical curves of genus $6$. \par
Let $F = \mathbb P(A) \cdot \mathbb G$ be a finite Morin configuration of planes in $\mathbb P^5$ of length $k \geq 16$. By theorem \ref{no plane} the $V$-threefold $V_A$ of $A$ contains a plane, say $P_h = \lbrace o \rbrace \times \mathbb P^2$ as in \ref{plane}. By proposition \ref{no fixed} $b := P_h \cdot \Sing V_A$ is the base scheme of a pencil of conics and it is finite.  Assume that $F$ has maximal cardinality $20$, then $F$ is smooth since its length is bound by $20$. Since $F - \lbrace u \rbrace$ is biregular to $\Sing V_A$ and $\Sing V_A$
 contains $b$, it follows that $b$ is a smooth complete intersection of two conics. Now we know from section $7$ that $V_A$ is the birational image of the product  map considered in \ref{reconstruct}, namely  
$$
\pi_1 \times \pi_2: \mathbb P^1 \times \mathbb P^2 \to \mathbb P^2 \times \mathbb P^2.
$$
We recall its definition.  We have $ C \subset Y \subset \mathbb P^1 \times \mathbb P^2 \subset \mathbb P^5$, where $Y$ is a smooth quintic Del Pezzo surface and $C \in \vert \mathcal O_Y(2) \vert$ is a canonical curve. Then $\pi_1: \mathbb P^1 \times \mathbb P^2 \to \mathbb P^2$ is defined by the net of divisors $\vert \mathcal J(2,2) \vert$, where $\mathcal J$ is the ideal sheaf of $C$, and the map $\pi_2: \mathbb P^1 \times \mathbb P^2 \to \mathbb P^2$ is the obvious projection. Moreover, $F$ is a Morin-Del Pezzo configuration and we have shown so far that $\Sing C$ is biregular to $\Sing (V_A - P_h)$. \par Therefore a smooth $F$ of cardinality $20$ is defined, up to $\Aut \mathbb P^5$, by a nodal curve $C \subset Y$ such that $\vert \Sing C \vert = 15$. Finally it is well known, and easy to see, that the unique curve $C$ such that $\Sing C$ is smooth of cardinality $15$ is the curve $C_{\ell}$ union of the $10$ lines of $Y$.  This proves the next uniqueness theorem.
\begin{thm} Up to $\Aut \mathbb P^5$ a unique finite Morin configuration of $20$ planes exists and it is the Morin-Del Pezzo configuration defined by the curve $C_{\ell}$. \end{thm}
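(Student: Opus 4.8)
The plan is to reduce the classification of the maximal configurations to that of the maximally singular quadratic sections of a quintic Del Pezzo surface, using the reconstruction of section 7. I begin with the bound: by theorem \ref{excess} a finite Morin configuration $F = \mathbb{P}(A) \cdot \mathbb{G}$ satisfies $|F| \le 16 + t(V_A) \le 20$, so length $20$ is maximal and forces $t(V_A) = 4$. Maximality also makes $F$ reduced: in the excess-intersection computation behind theorem \ref{excess} every point of $\Sing V_A$ contributes multiplicity at least $2$ to the residual scheme, and the bound is attained only if each such multiplicity equals $2$, i.e. each singular point of $V_A$ is an ordinary node. Thus $V_A$ has nineteen ordinary double points, $\Sing V_A$ is reduced, and by the biregular correspondence of theorem \ref{main} the configuration $F$ is smooth.

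Next, since $|F| \ge 16$, theorem \ref{no plane} forces $V_A$ to contain a plane, which after a coordinate change is the plane $P_h = \{o\} \times \mathbb{P}^2$ of \ref{plane}; hence $F$ is a Morin-Del Pezzo configuration. I then invoke the reconstruction of section 7: $V_A$ is the birational image of the product map $\pi_1 \times \pi_2 : \mathbb{P}^1 \times \mathbb{P}^2 \to \mathbb{P}^2 \times \mathbb{P}^2$ of \ref{reconstruct}, determined by a reduced quadratic section $C \in |\mathcal{O}_Y(2)|$ of a smooth quintic Del Pezzo surface $Y$, whose singular locus $\Sing C$ is biregular to $\Sing(V_A - P_h)$. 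By proposition \ref{no fixed} the base locus $b = P_h \cdot \Sing V_A$ is the complete intersection of two conics, and since $\Sing V_A$ is reduced, $b$ consists of the four coplanar nodes $h_u$.

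The numerical bookkeeping is then immediate. Removing the four points of $b = h_u$ from the nineteen nodes of $V_A$, the scheme $\Sing(V_A - P_h)$, and hence $\Sing C$, is a reduced set of fifteen nodes. By adjunction $C \in |{-2K_Y}|$ has arithmetic genus $6$; the nodal genus formula $p_a = \sum_i g(C_i) + \delta - c + 1$ with $p_a = 6$ and $\delta = 15$ gives $\sum_i g(C_i) = c - 10 \ge 0$, while $\deg_{-K_Y} C = 10$ bounds the number of components by $c \le 10$. Hence $c = 10$, every component is a rational curve of $(-K_Y)$-degree one, i.e. a line, and since $Y$ carries exactly ten lines the curve $C$ is their union $C_\ell$; in particular $\Sing C_\ell$ spans $\mathbb{P}^5$, as the construction requires.

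Finally, the smooth quintic Del Pezzo surface is unique up to $\Aut \mathbb{P}^5$ and its ten lines are intrinsically determined, so the curve $C_\ell$ --- and with it the Morin-Del Pezzo configuration $F = F_\ell$ reconstructed from it --- is unique up to $\Aut \mathbb{P}^5$. The step I expect to be the main obstacle is the reducedness of $F$ at maximal length: one must exclude non-reduced structure on $\Sing V_A$, which is exactly the smoothness of $b$ assumed throughout section 3 and is forced here only through the equality case of theorem \ref{excess}. Granting this, the remainder is the clean degree-and-genus count that singles out the ten lines.
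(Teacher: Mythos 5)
Your proposal follows essentially the same route as the paper: length $\geq 16$ forces a plane in $V_A$ via theorem \ref{no plane}, so $F$ is Morin--Del Pezzo; the reconstruction of section 7 identifies $\Sing C$ with $\Sing(V_A-P_h)$; the count $19-4=15$ then pins down a curve $C\in|\mathcal O_Y(2)|$ with fifteen nodes, which must be $C_\ell$, and uniqueness of the pair $(Y,C_\ell)$ up to $\Aut\mathbb P^5$ finishes. Two points of comparison. First, you correctly identify reducedness of $F$ as the delicate step, but your justification for it is both roundabout and overclaimed: equality in the excess-intersection count of theorem \ref{excess} gives multiplicity exactly $2$ at each off-plane point of the residual scheme $Z$, and the inference ``multiplicity $2$, i.e.\ each singular point of $V_A$ is an ordinary node'' is a non sequitur --- multiplicity two of a point in the curve--divisor intersection $R_x\cdot D$ does not by itself say anything about the analytic type of the singularity of the threefold, nor does it immediately rule out a length-two embedded structure on the scheme $\Sing V_A$ without a further local argument (one would need, e.g., that a tangent vector in $\Sing V_A$ at a singular point of $R_x$ forces multiplicity $\geq 3$ in $R_x\cdot D$). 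The paper disposes of this in one line: O'Grady's bound is a bound on the \emph{length} of the scheme $F$, so cardinality $20$ together with length $\leq 20$ forces length $=$ cardinality, i.e.\ $F$ is reduced (smooth), and then $b$ is automatically a smooth complete intersection of two conics. You should substitute this argument for your equality-case analysis. Second, your explicit degree-and-genus count ($p_a=6$, $\delta=15$, $-K_Y\cdot C=10$ forcing ten components of anticanonical degree one, hence the ten lines) is a genuine improvement in completeness over the paper, which leaves this as ``well known and easy to see''; it is correct as written. With the reducedness step repaired as above, the proof is sound and matches the paper's.
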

Notice that $C_{\ell}$ is invariant under the action of $\Aut Y$, which is the symmetric group $\frak S_5$. Actually $C_{\ell}$ is a stable graph curve which is uniquely defined by its associated  graph
$\Gamma$. This has $10$ vertices corresponding to the $10$ lines of $C_{\ell}$. Each edge of $\Gamma$ corresponds to a node $o \in \Sing C_{\ell}$ and joins the vertices corresponding to the two lines through $o$. In our situation $\Gamma$ is the  famous
\it Petersen graph \rm $\Gamma$.   
 \par We do not address a systematic study of the stratification by their length of Morin-Del Pezzo configurations. We simply outline here some simple ways of smoothing partially $C_{\ell}$ so to obtain
some of the missed families of length $k \in [16,19]$. To this purpose just consider suitable connected subgraphs $\lambda$ of arithmetic genus zero and consider the family of graph curves 
defined by the graph $\Gamma_{\lambda}$, obtained from $\Gamma$ after contracting $\lambda$ to a point. Let $L \subset C_{\ell}$ be the curve defined by $\lambda$, that is $ L = L_1 + \dots + L_n$
where the summands correspond to the vertices of $\lambda$. Then the linear system $\vert L \vert$ is very ample. We have $L^2 = n-2$ and $C_{\ell}D = 2n$. Let $D \in \vert L \vert$ be general and 
\begin{equation}
C = C_{\ell} - L + D.
\end{equation}
It is easy to see that $C$ is nodal and that $\vert \Sing C \vert = 15 - n$. Moreover for $1 \leq n \leq 4$ the construction provides a curve $C$ such that $\Sing C$ spans $\mathbb P^5$. Let
$\mathbb P^5_C$ be $5$-space of quadrics through $C$, then $\Sing C$ defines in it, as usual, a Morin-Del Pezzo configuration $20 - n$ planes. Iterating the contraction to a point of a genus $0$ subgraph, one can describe all the irreducible families of Morin configurations of length $k \geq 16$ and their quotients by $\Aut Y$. Hopefully this matter will be reconsidered elsewhere. \par
 \medskip \par  \underline{\it Concluding remarks} \par Some constructions in this paper, involving singular canonical curves of genus $6$, admit natural extensions to higher genus. Indeed let $W_g$ be a vector space whose dimension is the triangular number $\binom{g-2}2$. We can assume that $W_g$ is the dual of the space of quadratic forms vanishing on
a nodal canonical curve \begin{equation} C \subset \mathbb P^{g-1}. \end{equation}
Then the equality considered by Zak in \cite{A} has, as a special case, the following one
\begin{equation}
(g-3) +\binom{g-3}2 = \binom{g-2}2
\end{equation}
and this makes interesting to consider Morin configurations of $(g-4)$-spaces in the projective space $\mathbb P(W_g)$. Let $F$ be a finite Morin configuration in the Grassmannian
$\mathbb G$ of $(g-4)$-spaces of $\mathbb P(W_g)$. Among many other questions it is natural to ask: \medskip \par
\centerline{ \it What one can say about the maximal length of $F$? }
 \medskip \par
 Stable canonical curves $C$ of genus $g$ with many nodes provide interesting examples of finite families of incident $(g-4)$-spaces. Indeed let $\mathbb I_C$ be the linear system of quadrics through
a stable $C$ and let $\mathbb I_z := \lbrace Q \in \mathbb I \ \vert \ z \in \Sing Q, z \in \Sing C \rbrace$. It turns out that the orthogonal $P_z \subset \mathbb P(W_g)$ is a subspace of dimension $g-4$. Then the family
 $$
 F_C := \lbrace P_z, \ z \in \Sing C \rbrace
 $$
 is an example of family of incident $g-4$-spaces. Indeed  let $z_1, z_2 \in \Sing C$ be distinct points and let $P_{z_1}, P_{z_2} \subset \mathbb P(W_g)$ be the orthogonal
 $(g-4)$-spaces respectively of $\mathbb I_{z_1}, \mathbb I_{z_2}$. Then, with the same argument used in genus $6$, the codimension of the space spanned by $\mathbb I_{z_1} \cup \mathbb I_{z_2}$ turns out to
 be $\binom {g-4}2 = \dim W_{g-2}$. Equivalently $P_{z_1} \cap P_{z_2}$ is a point. Hence $F_C$ is a finite family of incident $(g-4)$-spaces of $\mathbb P^{\binom{g-2}2 -1}$. \par
 Now stable canonical curves $C$ which are union of lines are $3g-3$-nodal and provide smooth families $F_C$ of cardinality $3g-3$. Each curve $C$ of this type is uniquely defined by a suitable graph
 as in the case of the Petersen graph. For instance a \it generalized Petersen graph $G(2k-1,1)$ \rm, see \cite{FGW},  uniquely defines a stable canonical curve  
 $$ C_{g, \ell} \subset \mathbb P^1 \times \mathbb P^{k-1} \subset \mathbb P^{g-1}
 $$
of even genus $g = 2k$. Omitting the discussion of the odd genus case and several details, this is readily constructed in $\mathbb P^1 \times \mathbb P^{k-1}$ as follows. In the Segre embedding $\mathbb P^1 \times \mathbb P^{k+1}$ consider the lines $L_i := \mathbb P^1 \times t_i, \ i = 1 \dots k+1$,  where $t := \lbrace t_1 \dots t_{k+1} \rbrace$ is a set of points in general position in $\mathbb P^{k-1}$. On the other hand  one can construct in $\mathbb P^{k-1}$ three nodal rational normal curves $R'_1, R'_2, R'_3$ which are union of lines, have no common component and  contain $t$ as a subset of smooth points. Then we can define the curve
 \begin{equation}
 C_{g,\ell} = R_1 \cup R_2 \cup R_3 \cup L_1 \cup \dots \cup L_{k+1},
 \end{equation}
where $R_j := u_j \times R'_j$, $j = 1,2,3$ and $u_j \in \mathbb P^1$.  Let $F_{g, \ell} := \lbrace P_z, \ z \in \Sing C_{g,\ell} \rbrace$ be the set of incident $(g-4)$-spaces defined by $\Sing C_{g,\ell}$. For $g \geq 8$ it is natural to ask
wether $F_{g, \ell}$ is a Morin configuration and has maximal cardinality. In any case the study of graph curves like $C_{g,\ell}$ seems to be interesting in order to study Morin configurations and their relations to the geometry of canonical curves.

\end{document}